\documentclass[11pt, noadjust]{amsart}
\pagestyle{plain}
\raggedbottom
\usepackage[dvipsnames]{xcolor}
\usepackage[utf8]{inputenc}
\usepackage{amssymb,amsmath,amsthm,amsfonts,fullpage,float,latexsym,bbm,microtype,cite,fancyvrb}
\usepackage{tikz}
\usetikzlibrary{calc}
\usepackage{enumerate}
\usetikzlibrary{decorations.pathreplacing,patterns}
\usepackage{hyperref}
\hypersetup{colorlinks=true, linkcolor=blue, citecolor=magenta, filecolor=magenta, urlcolor=magenta}
\usepackage{graphicx}
\usepackage{ytableau}
\usepackage{subcaption}
\usepackage{makecell}
\usepackage{tablefootnote}
\usepackage{bm}

\definecolor{Midori}{HTML}{2A603B}
\definecolor{Scarlet}{HTML}{CF3A24}
\definecolor{Gamboge}{HTML}{FFB61E}
\definecolor{Lapis}{HTML}{1F4788}

\makeatletter
\newtheorem*{rep@theorem}{\rep@title}\newcommand{\newreptheorem}[2]{%
\newenvironment{rep#1}[1]{%
\def\rep@title{\bf #2 \ref{##1}}%
\begin{rep@theorem}}%
{\end{rep@theorem}}}
\makeatother
\newreptheorem{theorem}{Theorem}

\newtheorem{theorem}{Theorem}[section]
\newtheorem{proposition}[theorem]{Proposition}

\newtheorem{conjecture}[theorem]{Conjecture}
\newtheorem{lemma}[theorem]{Lemma}
\newtheorem{corollary}[theorem]{Corollary}
\theoremstyle{definition}
\newtheorem{remark}[theorem]{Remark}
\newtheorem{definition}[theorem]{Definition}

\newtheorem{example}[theorem]{Example}


\newcommand{\Z}{\mathbb{Z}}
\newcommand{\C}{\mathbb{C}}

\DeclareMathOperator{\Hilb}{Hilb}
\DeclareMathOperator{\Frob}{Frob}
\DeclareMathOperator{\stair}{stair}
\DeclareMathOperator{\Stir}{Stir}
\DeclareMathOperator{\SW}{SW}
\DeclareMathOperator{\SF}{SF}
\DeclareMathOperator{\sminv}{sminv}
\DeclareMathOperator{\sdinv}{sdinv}
\DeclareMathOperator{\Split}{Split}

\DeclareMathOperator{\wt}{wt}
\DeclareMathOperator{\Set}{Set}
\DeclareMathOperator{\Comp}{co}
\DeclareMathOperator{\Asc}{Asc}
\DeclareMathOperator{\SYT}{SYT}
\DeclareMathOperator{\maj}{maj}
\DeclareMathOperator{\dist}{dist}
\DeclareMathOperator{\des}{des}
\DeclareMathOperator{\Char}{char}
\DeclareMathOperator{\MS}{MS}
\DeclareMathOperator{\up}{up}

\DeclareRobustCommand{\qbinom}{\genfrac[]{0pt}{}}
\newcommand{\multiset}[2]{\big(\!\binom{#1}{#2}\!\big)}

\newcommand{\motzkin}[2]{
    \begin{tikzpicture}[scale=0.6, baseline=(current bounding box.center)]
        \coordinate (current) at (0,0);
        \ifcsname c@step\endcsname
        \else
            \newcounter{step}
        \fi
        \setcounter{step}{1}
        \foreach \step in {#1} {
            \ifnum\pdfstrcmp{\step}{up}=0
                \draw[thick] (current) -- ++(1,1);
                \coordinate (current) at ($(current) + (1,1)$);
            \fi
            \ifnum\pdfstrcmp{\step}{flat-theta}=0
                \draw[thick] (current) -- ++(1,0) node[midway, above, font=\footnotesize] {$\theta_{\arabic{step}}$};
                \coordinate (current) at ($(current) + (1,0)$);
            \fi
            \ifnum\pdfstrcmp{\step}{flat-xi}=0
                \draw[thick] (current) -- ++(1,0) node[midway, above, font=\footnotesize] {$\xi_{\arabic{step}}$};
                \coordinate (current) at ($(current) + (1,0)$);
            \fi
            \ifnum\pdfstrcmp{\step}{down}=0
                \draw[thick] (current) -- ++(1,-1) node[midway, above, font=\footnotesize, xshift=2pt, yshift=1pt] {$\theta_{\arabic{step}} \xi_{\arabic{step}}$};
                \coordinate (current) at ($(current) + (1,-1)$);
            \fi
            \stepcounter{step}
        }
        \node[anchor=north] at (0.5*\arabic{step}, -0.5) {\normalsize #2};
    \end{tikzpicture}
}

\newcommand{\motzkinpath}[4]{
    \begin{tikzpicture}[scale=0.6, baseline=(current bounding box.center)]
        \foreach \i [count=\j from 1] in {#2} {
            \ifnum\i>0
                \foreach \k in {0,...,\numexpr\i-1} {
                    \draw (\j-1,\k) rectangle (\j,\k+1);
                }
            \else
                \draw (\j-1,0) -- (\j,0);
            \fi
        }
        \foreach \i [count=\j from 1] in {#3} {
            \ifnum\i>0
                \foreach \k in {0,...,\numexpr\i-1} {
                    \draw[fill=gray!40] (\j-1,\k) rectangle (\j,\k+1);
                }
            \fi
        }
        \coordinate (current) at (0,0);
        \ifcsname c@step\endcsname
        \else
            \newcounter{step}
        \fi
        \setcounter{step}{1}
        \foreach \step in {#1} {
            \ifnum\pdfstrcmp{\step}{up}=0
                \draw[thick] (current) -- ++(1,1);
                \coordinate (current) at ($(current) + (1,1)$);
            \fi
            \ifnum\pdfstrcmp{\step}{flat-theta}=0
                \draw[thick] (current) -- ++(1,0) node[midway, above, font=\footnotesize] {$\theta_{\arabic{step}}$};
                \coordinate (current) at ($(current) + (1,0)$);
            \fi
            \ifnum\pdfstrcmp{\step}{flat-xi}=0
                \draw[thick] (current) -- ++(1,0) node[midway, above, font=\footnotesize] {$\xi_{\arabic{step}}$};
                \coordinate (current) at ($(current) + (1,0)$);
            \fi
            \ifnum\pdfstrcmp{\step}{down}=0
                \draw[thick] (current) -- ++(1,-1) node[midway, above, font=\footnotesize, xshift=2pt, yshift=1pt] {$\theta_{\arabic{step}} \xi_{\arabic{step}}$};
                \coordinate (current) at ($(current) + (1,-1)$);
            \fi
            \ifnum\pdfstrcmp{\step}{skip}=0
                \coordinate (current) at ($(current) + (1,-1)$);
            \fi
            \stepcounter{step}
        }
        \node[anchor=north] at (0.5*\arabic{step}, -0.5) {\normalsize #4};
    \end{tikzpicture}
}

\newcommand{\motzkinpathbig}[7]{
    \begin{tikzpicture}[scale=0.6, baseline=(current bounding box.center)]
        \foreach \i [count=\j from 1] in {#2} {
            \ifnum\i>0
                \foreach \k in {0,...,\numexpr\i-1} {
                    \draw (\j-1,\k) rectangle (\j,\k+1);
                }
            \else
                \draw (\j-1,0) -- (\j,0);
            \fi
        }
        \foreach \i [count=\j from 1] in {#3} {
            \ifnum\i>0
                \foreach \k in {0,...,\numexpr\i-1} {
                    \draw[pattern = crosshatch] (\j-1,\k) rectangle (\j,\k+1);
                }
            \fi
        }
        \coordinate (current) at (0,0);
        \ifcsname c@step\endcsname
        \else
            \newcounter{step}
        \fi
        \setcounter{step}{1}
        \foreach \step in {#1} {
            \ifnum\pdfstrcmp{\step}{up}=0
                \draw[thick] (current) -- ++(1,1);
                \coordinate (current) at ($(current) + (1,1)$);
            \fi
            \ifnum\pdfstrcmp{\step}{flat-theta}=0
                \draw[thick] (current) -- ++(1,0) node[midway, above, font=\footnotesize] {$\theta_{\arabic{step}}$};
                \coordinate (current) at ($(current) + (1,0)$);
            \fi
            \ifnum\pdfstrcmp{\step}{flat-xi}=0
                \draw[thick] (current) -- ++(1,0) node[midway, above, font=\footnotesize] {$\xi_{\arabic{step}}$};
                \coordinate (current) at ($(current) + (1,0)$);
            \fi
            \ifnum\pdfstrcmp{\step}{down}=0
                \draw[thick] (current) -- ++(1,-1) node[midway, above, font=\footnotesize, xshift=7pt, yshift=1pt] {$\theta_{\arabic{step}} \xi_{\arabic{step}}$};
                \coordinate (current) at ($(current) + (1,-1)$);
            \fi
            \ifnum\pdfstrcmp{\step}{skip}=0
                \coordinate (current) at ($(current) + (1,-1)$);
            \fi
            \stepcounter{step}
        }
        \draw[ultra thick, dashed, red] (#5, 0) -- (#5, #7) node[pos=1, above] {$T_1$};
        \draw[ultra thick, dashed, blue] (#6, 0) -- (#6, #7) node[pos=1, above] {$T_2$};
        \draw [decorate,decoration={brace,amplitude=5pt,mirror,raise=1ex}]
        (0,0) -- (#5,0) node[midway,yshift=-2em]{$d+1$} node[midway,yshift=-3.5em]{Region 0};
        \draw [decorate,decoration={brace,amplitude=5pt,mirror,raise=1ex}]
        (#5,0) -- (#6,0) node[midway,yshift=-2em]{$n-d-1-k$} node[midway,yshift=-3.5em]{Region 1};
        \draw [decorate,decoration={brace,amplitude=5pt,mirror,raise=1ex}]
        (#6,0) -- (14,0) node[midway,yshift=-2em]{$k$} node[midway,yshift=-3.5em]{Region 2};
        \node[anchor=north] at (0.5*\arabic{step}, -0.5) {\normalsize #4};
    \end{tikzpicture}
}

\newcommand{\motzkinpathmiddle}[7]{
    \begin{tikzpicture}[scale=0.55, baseline=(current bounding box.center)]
        \foreach \i [count=\j from 1] in {#2} {
            \ifnum\numexpr\j-1<#6
                \ifnum\i>0
                    \foreach \k in {0,...,\numexpr\i-1} {
                        \draw (\j-1,\k) rectangle (\j,\k+1);
                    }
                \fi
            \fi
        }
        \foreach \i [count=\j from 1] in {#3} {
            \ifnum\numexpr\j-1<#6
                \ifnum\i>0
                    \foreach \k in {0,...,\numexpr\i-1} {
                        \draw[pattern = crosshatch] (\j-1,\k) rectangle (\j,\k+1);
                    }
                \fi
            \fi
        }
        \coordinate (current) at (0,0);
        \ifcsname c@step\endcsname
        \else
            \newcounter{step}
        \fi
        \setcounter{step}{1}
        \foreach \step in {#1} {
            \ifnum\pdfstrcmp{\step}{up}=0
                \ifnum\value{step} > #5 
                    \draw[thick] (current) -- ++(1,1);
                    \coordinate (current) at ($(current) + (1,1)$);
                \else
                    \coordinate (current) at ($(current) + (1,1)$);
                \fi 
            \fi
            \ifnum\pdfstrcmp{\step}{flat-xi}=0
                \draw[thick] (current) -- ++(1,0) node[midway, above, font=\footnotesize] {$\xi_{\arabic{step}}$};
                \coordinate (current) at ($(current) + (1,0)$);
            \fi
            \ifnum\pdfstrcmp{\step}{skip}=0
                \coordinate (current) at ($(current) + (1,-1)$);
            \fi
            \stepcounter{step}
        }
        \node[anchor=north] at (0.5*\arabic{step}, -0.5) {\normalsize #4};
    \end{tikzpicture}
}

\newcommand{\motzkinpathnolabel}[3]{
    \begin{tikzpicture}[scale=0.6, baseline=(current bounding box.center)]
        \foreach \i [count=\j from 1] in {#2} {
            \ifnum\i>0
                \foreach \k in {0,...,\numexpr\i-1} {
                    \draw (\j-1,\k) rectangle (\j,\k+1);
                }
            \else
                \draw (\j-1,0) -- (\j,0);
            \fi
        }
        \foreach \i [count=\j from 1] in {#3} {
            \ifnum\i>0
                \foreach \k in {0,...,\numexpr\i-1} {
                    \draw[fill=gray!40] (\j-1,\k) rectangle (\j,\k+1);
                }
            \fi
        }
        \coordinate (current) at (0,0);
        \ifcsname c@step\endcsname
        \else
            \newcounter{step}
        \fi
        \setcounter{step}{1}
        \foreach \step in {#1} {
            \ifnum\pdfstrcmp{\step}{up}=0
                \draw[thick] (current) -- ++(1,1);
                \coordinate (current) at ($(current) + (1,1)$);
            \fi
            \ifnum\pdfstrcmp{\step}{flat-theta}=0
                \draw[thick] (current) -- ++(1,0) node[midway, above, font=\footnotesize] {$\theta_{\arabic{step}}$};
                \coordinate (current) at ($(current) + (1,0)$);
            \fi
            \ifnum\pdfstrcmp{\step}{flat-xi}=0
                \draw[thick] (current) -- ++(1,0) node[midway, above, font=\footnotesize] {$\xi_{\arabic{step}}$};
                \coordinate (current) at ($(current) + (1,0)$);
            \fi
            \ifnum\pdfstrcmp{\step}{down}=0
                \draw[thick] (current) -- ++(1,-1) node[midway, above, font=\footnotesize, xshift=2pt, yshift=1pt] {$\theta_{\arabic{step}} \xi_{\arabic{step}}$};
                \coordinate (current) at ($(current) + (1,-1)$);
            \fi
            \ifnum\pdfstrcmp{\step}{skip}=0
                \coordinate (current) at ($(current) + (1,-1)$);
            \fi
            \stepcounter{step}
        }
    \end{tikzpicture}
}
\begin{document}

\title[The $(1,2)$-bosonic-fermionic coinvariant ring]{A conjectural basis for the $(1,2)$-bosonic-fermionic coinvariant ring}

\author[J. Lentfer]{John Lentfer}
\address{Department of Mathematics\\
         University of California, Berkeley, CA, USA}
\email{jlentfer@berkeley.edu}


\begin{abstract}
We give the first conjectural construction of a monomial basis for the coinvariant ring $R_n^{(1,2)}$, for the symmetric group $\mathfrak{S}_n$ acting on one set of bosonic (commuting) and two sets of fermionic (anticommuting) variables. 
Our construction interpolates between the modified Motzkin path basis for $R_n^{(0,2)}$ of Kim--Rhoades (2022) and the super-Artin basis for $R_n^{(1,1)}$ conjectured by Sagan--Swanson (2024) and proven by Angarone et al. (2025). 
We prove that our proposed basis has cardinality $2^{n-1}n!$, aligning with a conjecture of Zabrocki (2020) on the dimension of $R_n^{(1,2)}$, and show how it gives a combinatorial expression for the Hilbert series. 
We also conjecture a Frobenius series for $R_n^{(1,2)}$.
We show that these proposed Hilbert and Frobenius series are equivalent to conjectures of Iraci, Nadeau, and Vanden Wyngaerd (2024) on $R_n^{(1,2)}$ in terms of segmented Smirnov words, by exhibiting a weight-preserving bijection between our proposed basis and their segmented permutations.
We extend some of their results on the sign character to hook characters, and give a formula for the $m_\mu$ coefficients of the conjectural Frobenius series.
Finally, we conjecture a monomial basis for the analogous ring in type $B_n$, and show that it has cardinality $4^nn!$.
\end{abstract}

\maketitle

\section{Introduction}\label{sec:introduction}
The classical coinvariant ring $R_n^{(1,0)} = \C[\bm{x}_n]/\allowbreak \langle \C[\bm{x}_n]_+^{\mathfrak{S}_n} \rangle$ is the quotient ring of a polynomial ring in $n$ variables $\bm{x}_n = \{x_1,\ldots,x_n\}$ by $\mathfrak{S}_n$-invariant polynomials with no constant term.
It is well-known (see for example \cite[Section 1.5]{Haiman1994}) to have dimension $n!$, Hilbert series $[n]_q!$, and Frobenius series \[\sum_{\lambda \vdash n} \sum_{T \in \SYT(\lambda)} q^{\maj(T)}s_\lambda.\]
An important basis of the classical coinvariant ring is the Artin basis $\{x_1^{\alpha_1}x_2^{\alpha_2}\cdots x_n^{\alpha_n}\, | \,0\leq \alpha_i \leq i-1\}$.

In 1994, Haiman \cite{Haiman1994} introduced the diagonal coinvariant ring $R_n^{(2,0)} = \C[\bm{x}_n,\bm{y}_n]/\allowbreak \langle \C[\bm{x}_n,\bm{y}_n]_+^{\mathfrak{S}_n} \rangle$, which extends the classical coinvariant ring to two sets of $n$ variables. 
Here and throughout, $\mathfrak{S}_n$ acts diagonally by permuting the indices of the variables. 
Given a polynomial ring $R$, the notation $\langle R_+^{\mathfrak{S}_n} \rangle$ denotes the ideal generated by all polynomials in $R$ which are invariant under the diagonal action of $\mathfrak{S}_n$, with no constant term.
In 2002, Haiman \cite{Haiman2002} proved that $R_n^{(2,0)}$ has dimension $(n+1)^{n-1}$, Hilbert series $\langle \nabla e_n , h_1^n\rangle$, and Frobenius series $\nabla e_n$, using several deep results in algebraic geometry.
A combinatorial formula for its Hilbert series was conjectured by Haglund and Loehr \cite{HaglundLoehr2005}, and eventually was proven as a consequence of the more general shuffle theorem, conjectured by Haglund, Haiman, Loehr, Remmel, and Ulyanov \cite{HHLRU2005} and proven by Carlsson and Mellit \cite{CarlssonMellit2018}, which gives a combinatorial formula for $\nabla e_n$.
A monomial basis was given by Carlsson and Oblomkov \cite{CarlssonOblomkov2018}.

Recently, there has been interest (see \cite{BergeronOPAC, Bergeron2020, BHIR, Zabrocki2020}) in extending the setting to include coinvariant rings with $k$ sets of $n$ commuting variables $\bm{x}_n, \bm{y}_n, \bm{z}_n, \ldots$ and $j$ sets of $n$ anticommuting variables $\bm{\theta}_n, \bm{\xi}_n, \bm{\rho}_n, \ldots$.
We denote this by 
\begin{equation} R_n^{(k,j)} = \C[\underbrace{\bm{x}_n, \bm{y}_n, \bm{z}_n, \ldots}_k, \underbrace{\bm{\theta}_n, \bm{\xi}_n, \bm{\rho}_n, \ldots}_j]/\langle \C[\underbrace{\bm{x}_n, \bm{y}_n, \bm{z}_n, \ldots}_k, \underbrace{\bm{\theta}_n, \bm{\xi}_n, \bm{\rho}_n, \ldots}_j]_+^{\mathfrak{S}_n} \rangle.\end{equation}
Commuting variables commute with all variables. Anticommuting variables anticommute with all anticommuting variables, that is, $\theta_i \theta_j = - \theta_j \theta_i$, which also implies that $\theta_i^2 = 0$. 

We briefly overview some recent results on the special cases of $R_n^{(k,j)}$ for which there has been significant progress. Kim and Rhoades \cite{KimRhoades2022} showed that the fermionic diagonal coinvariant ring $R_n^{(0,2)} = \C[\bm{\theta}_n,\bm{\xi}_n]/\allowbreak \langle \C[\bm{\theta}_n,\bm{\xi}_n]_+^{\mathfrak{S}_n} \rangle$ has dimension $\binom{2n-1}{n}$, confirming a conjecture of Zabrocki \cite{Zabrocki2020}. They gave a combinatorial formula for its Hilbert series:
\begin{equation}\Hilb(R_n^{(0,2)}; u, v) = \sum_{\pi \in \Pi(n)_{> 0}} u^{\deg_\theta(\pi)} v^{\deg_\xi(\pi)},\end{equation}
where $\Pi(n)_{> 0}$ denotes the set of modified Motzkin paths\footnote{This and other undefined terms which are needed will be defined in the subsequent sections.} of length $n$. They also gave a monomial basis for $R_n^{(0,2)}$, and found its Frobenius series \cite[Theorem 6.1]{KimRhoades2022}:
\begin{equation}\label{eq:02frob} \Frob(R_n^{(0,2)};u,v) = \sum_{i+j < n}u^iv^j( s_{(n-i,1^i)}*s_{(n-j,1^j)}-s_{(n-i+1,1^{i-1})}*s_{(n-j+1,1^{j-1})}),\end{equation}
where $*$ denotes the Kronecker product and $s_{(n+1,1^{-1})}$ is interpreted as $0$.

The superspace coinvariant ring is $R_n^{(1,1)} = \C[\bm{x}_n,\bm{\theta}_n]/\allowbreak \langle \C[\bm{x}_n,\bm{\theta}_n]_+^{\mathfrak{S}_n} \rangle$. Sagan and Swanson \cite{SaganSwanson2024} conjectured, and Rhoades and Wilson \cite{RhoadesWilson2023} proved, that its Hilbert series is
\begin{equation} \Hilb(R_n^{(1,1)}; q; u) = \sum_{k=1}^n u^{n-k} [k]_q! \Stir_q(n,k),\end{equation}
where the $q$-Stirling number $\Stir_q(n,k)$ is defined by the recurrence relation $\Stir_q(n,k) = [k]_q \Stir_q(n-1,k) + \Stir_q(n-1,k-1)$ with initial conditions $\Stir_q(0,k) = \delta_{k,0}$. 
This implies that the dimension of $R_n^{(1,1)}$ is the ordered Bell number, which counts the number of ordered set partitions of $\{1,\ldots,n\}$.
Sagan and Swanson \cite{SaganSwanson2024} conjectured and Angarone, Commins, Karn, Murai, and Rhoades \cite{Angarone2025} proved that a certain super-Artin set is a basis for $R_n^{(1,1)}$.
A Frobenius series has been conjectured for $R_n^{(1,1)}$ \cite[Equation 5.2]{BHIR}:
\begin{equation}\label{eq:11conjFrob} \Frob(R_n^{(1,1)};q;u) = \sum_{k=0}^{n-1} \sum_{\lambda \vdash n} \sum_{T \in \SYT(\lambda)} u^k q^{\maj(T) - k\des(T) + \binom{k}{2}}  \qbinom{\des(T)}{k}_q s_\lambda.\end{equation}

In general, much less is known about $R_n^{(k,j)}$ when $k+j \geq 3$. 
In 1994, Haiman \cite{Haiman1994} conjectured that the dimension of $R_n^{(3,0)}$ is given by $2^n(n+1)^{n-2}$, which is still open. A partially-graded Frobenius series has been conjectured by Bergeron and Pr\'eville-Ratelle (originally \cite{BergeronPrevilleRatelle}; phrased as in \cite{BHIR}):
\begin{equation} \Frob(R_n^{(3,0)};q,t,1) = \sum_{\alpha \preceq \beta} q^
{\dist(\alpha,\beta)}\mathbb{L}_{\beta}(t),\end{equation}
where $\alpha \preceq \beta$ are pairs of elements (Dyck paths) of the Tamari lattice, $\dist(\alpha,\beta)$ is the length of the longest chain from $\alpha$ to $\beta$, and $\mathbb{L}_\beta(t)$ denotes the LLT polynomial associated to the Dyck path $\beta$.

There is no conjecture for the dimension of $R_n^{(0,3)}$.

Zabrocki \cite{Zabrocki2019} conjectured the following Frobenius series for $R_n^{(2,1)}$: \begin{equation}\label{eq:zabrocki}
\Frob(R_n^{(2,1)};q,t;u) = \Delta'_{e_{n-1} + u e_{n-2} + \cdots + u^{n-1}}(e_n),
\end{equation} where $\Delta_f'$ is a Macdonald eigenoperator defined by $\Delta_f'\tilde{H}_\mu[X;q,t] = f[B_\mu(q,t) - 1]\tilde{H}_\mu[X;q,t]$.\footnote{For definitions of the terms in this expression, see \cite{HaglundRemmelWilson2018}.} Finally, its dimension is conjectured by Zabrocki \cite{Zabrocki2020} to be given by $\frac{1}{2}\sum_{k=0}^{n+1} \binom{n+1}{k}\frac{k^n}{n+1}$ (see OEIS sequence \href{https://oeis.org/A201595}{A201595} \cite{OEIS}). Haglund, Rhoades, and Shimozono showed that this implies the conjecture in equation~\eqref{eq:11conjFrob} \cite{HaglundRhoadesShimozono}.

D'Adderio, Iraci, and Vanden Wyngaerd \cite{DadderioIraciVandenWyngaerd2021} introduced certain symmetric function operators $\Theta_f$, called Theta operators, where $f$ is any symmetric function, in order to extend Zabrocki's conjecture on the Frobenius series of $R_n^{(2,1)}$ to a conjectural Frobenius series for $R_n^{(2,2)}$. They conjectured that
\begin{equation}\label{eq:thetaconj} \Frob(R_n^{(2,2)}; q,t;u,v) = \sum_{\substack{k,\ell \geq 0,\\ k + \ell < n}} u^k v^\ell\Theta_{e_k}\Theta_{e_\ell}\nabla e_{n-k-\ell},\end{equation}
which is known as the Theta conjecture.
Since they showed that $\Delta'_{n-k-1} e_n = \Theta_{e_k} \nabla e_{n-k}$, Zabrocki's conjecture is recovered by setting $v=0$ (equivalently, $\ell=0$).

The ring $R_n^{(1,2)} = \C[\bm{x}_n,\bm{\theta}_n,\bm{\xi}_n]/\allowbreak \langle \C[\bm{x}_n,\bm{\theta}_n,\bm{\xi}_n]_+^{\mathfrak{S}_n} \rangle$ is the main object of study in this paper. 
Zabrocki \cite{Zabrocki2020} conjectured that the dimension of $R_n^{(1,2)}$ is $2^{n-1}n!$, and an ungraded Frobenius series was conjectured by Bergeron \cite{Bergeron2020}:
\begin{equation}\label{eq:Bergeron_conj1} \Frob(R_n^{(1,2)};q;u,v)|_{q=u=v=1} = \frac{1}{2} \sum_{\mu \vdash n} 2^{\ell(\mu)}(-1)^{n-\ell(\mu)} \binom{\ell(\mu)}{d_1(\mu),\ldots,d_n(\mu)} p_\mu,\end{equation}
where $\ell(\mu)$ denotes the length of the partition $\mu$ and $d_i(\mu)$ denotes the number of parts of size $i$ in $\mu$. Another formulation of the conjecture was given in \cite{BHIR}:
\begin{equation}\label{eq:Bergeron_conj2} \Frob(R_n^{(1,2)};q;u,v)|_{q=u=v=1} = \sum_{\mu \vdash n} e_\mu \sum_{\alpha \sim \mu} \alpha_1(2\alpha_2 -1) \cdots (2\alpha_{\ell(\alpha)}-1),\end{equation}
where the second sum is over all compositions $\alpha$ whose parts rearrange to the partition $\mu$.

The Theta conjecture specialized at $t=0$ immediately gives a conjectural Frobenius series for $R_n^{(1,2)}$, however, the specialization can only be done after applying both of the Theta operators, so the resulting formula does not easily simplify.
Recently, Iraci, Nadeau, and Vanden Wyngaerd \cite{IraciNadeauVandenWyngaerd2024} have given a conjectural Hilbert series (equation~\eqref{eq:INVW_conj_Hilb}) and conjectural Frobenius series (equation~\eqref{eq:INVW_conj_Frob} and Proposition~\ref{prop:INVW-quasisymmetric_expansion}) using the combinatorics of segmented Smirnov words. However, they did not propose a monomial basis. 
Their results which will be used in this paper are surveyed in Section~\ref{sec:background-smirnov}.

Our first main contribution is a combinatorial construction of a set of monomials, denoted by $B_n^{(1,2)}$ (Definition~\ref{def:B_n^{(1,2)}}), which we conjecture to give a basis of $R_n^{(1,2)}$ (Conjecture~\ref{conj:R_n^{(1,2)}}).
If the conjecture holds, it implies the following combinatorial Hilbert series (Proposition~\ref{prop:Hilb(R_n^{(1,2)})}):
\begin{equation}\label{eq:comb_Hilbert} \Hilb(R_n^{(1,2)};q;u,v) = \sum_{\pi \in \Pi(n)_{>0}} u^{\deg_\theta(\pi)} v^{\deg_\xi(\pi)} \stair_q(\pi).\end{equation}
In support of this conjecture, we show that the cardinality of $B_n^{(1,2)}$ is $2^{n-1}n!$ (Theorem~\ref{thm:cardinality}). 
By establishing a weight-preserving bijection between $B_n^{(1,2)}$ and the set of segmented permutations $\SW(1^n)$ (Theorem~\ref{thm:bijection}), we are able to show that our conjectural Hilbert series is equivalent to the conjectural Hilbert series of Iraci, Nadeau, and Vanden Wyngaerd \cite{IraciNadeauVandenWyngaerd2024} (Corollary~\ref{cor:equivalence_Hilbert}).

Our second main contribution is a simple combinatorial formula for the conjectural Frobenius series of $R_n^{(1,2)}$ (Conjecture~\ref{conj:frob}):
\begin{equation}\Frob(R_n^{(1,2)};q;u,v) = \sum_{b \in B_n^{(1,2)}} u^{\deg_\theta(b)} v^{\deg_\xi(b)} q^{\deg_x(b)} Q_{\Asc(b),n},\end{equation}
where $Q_{S,n}$ denotes the fundamental quasisymmetric function.
We show that this is equivalent to the conjectural Frobenius series of Iraci, Nadeau, and Vanden Wyngaerd (Theorem~\ref{thm:equiv_Frobenius}).
A benefit of using $B_n^{(1,2)}$ instead of segmented permutations is that determining $\Asc(b)$ and $\deg_x(b)$ for $b \in B_n^{(1,2)}$ is typically more direct than determining $\Split(\sigma)$ and $\sminv(\sigma)$ for $\sigma \in \SW(1^n)$, which fulfill analogous roles.

In Section~\ref{sec:quasisymmetric}, we give a formula for the $m_\mu$ coefficient of our conjectural Frobenius series.
In Section~\ref{sec:hooks}, we extend some results of Iraci, Nadeau, and Vanden Wyngaerd on the sign character to hook shapes. Specifically, we give two formulas, one in terms of the proposed basis (Theorem~\ref{thm:schur-coeff-hook}) and one in terms of $q$-binomial coefficients (Theorem~\ref{thm:qbinomial}):
\begin{align*}&\langle \Frob(R_n^{(1,2)};q;u,v), s_{(d+1,1^{n-d-1})}\rangle\\ &\quad= \sum_{k+\ell < n} u^k v^\ell q^{\binom{n-d-k-\ell}{2}}\qbinom{n-1-d}{\ell}_q\qbinom{n-1-k}{d}_q\qbinom{n-1-\ell}{k}_q.\end{align*}

In Section~\ref{sec:typeB}, we consider some of the same questions in type $B$. Let \begin{equation}R_{\mathfrak{B}_n}^{(k,j)} = \C[\underbrace{\bm{x}_n, \bm{y}_n, \bm{z}_n, \ldots}_k, \underbrace{\bm{\theta}_n, \bm{\xi}_n, \bm{\rho}_n, \ldots}_j]/\langle \C[\underbrace{\bm{x}_n, \bm{y}_n, \bm{z}_n, \ldots}_k, \underbrace{\bm{\theta}_n, \bm{\xi}_n, \bm{\rho}_n, \ldots}_j]_+^{\mathfrak{B}_n} \rangle,\end{equation} where $\mathfrak{B}_n$ denotes the Weyl group of type $B$ and rank $n$ acting diagonally, that is, by permuting variables with sign.

The Hilbert series for $R_{\mathfrak{B}_n}^{(1,0)}$ is well-known to be $[2n]_q!!$, and unknown in general for $R_{\mathfrak{B}_n}^{(2,0)}$, although Gordon \cite{Gordon} constructed a notable singly-graded $\mathfrak{B}_n$-quotient module of $R_{\mathfrak{B}_n}^{(2,0)}$ with dimension $(2n+1)^n$, as conjectured by Haiman \cite{Haiman1994}. Gordon's construction shows that $\dim R_{\mathfrak{B}_n}^{(2,0)} \geq (2n+1)^n$. Equality holds only for $n = 2,3$; for $n \geq 4$, Haiman \cite{Haiman1994} conjectured and Ajila and Griffeth \cite{AjilaGriffeth} proved that $\dim R_{\mathfrak{B}_n}^{(2,0)} > (2n+1)^n$.

The Hilbert series for $R_{\mathfrak{B}_n}^{(0,2)}$ was found by Kim and Rhoades (in fact, their results were type-independent). The Hilbert series for $R_{\mathfrak{B}_n}^{(1,1)}$ was conjectured by Sagan and Swanson \cite{SaganSwanson2024} to be:
\begin{equation} \Hilb(R_{\mathfrak{B}_n}^{(1,1)}; q; u) = \sum_{k=1}^n u^{n-k} [2k]_q!! \Stir^B_q(n,k),\end{equation}
where the type $B$ $q$-Stirling number $\Stir^B_q(n,k)$ is defined by the recurrence relation $\Stir^B_q(n,k) = [2k+1]_q \Stir^B_q(n-1,k) + \Stir^B_q(n-1,k-1)$ with initial conditions $\Stir^B_q(0,k) = \delta_{k,0}$. 

We interpolate between the work of Kim--Rhoades and Sagan--Swanson to give a combinatorial construction of a set of monomials, denoted by $B_{\mathfrak{B}_n}^{(1,2)}$ (Definition~\ref{def:B_{B_n}^{(1,2)}}), which we conjecture to be a basis of $R_{\mathfrak{B}_n}^{(1,2)}$ (Conjecture~\ref{conj:R_{B_n}^{(1,2)}}).
If the conjecture holds, this implies the following combinatorial Hilbert series:
\begin{equation} \Hilb(R_{\mathfrak{B}_n}^{(1,2)}; q; u) = \sum_{\pi \in \Pi_{\geq 0}(n)} u^{\deg_\theta(\pi)} v^{\deg_\xi(\pi)} \stair_q^B(\pi).\end{equation}

We also conjecture that $\dim R_{\mathfrak{B}_n}^{(1,2)} = 4^n n!$ (Conjecture~\ref{conj:dimR_{B_n}^{(1,2)}}), and conclude by showing that the cardinality of $B_{\mathfrak{B}_n}^{(1,2)}$ agrees with this conjecture (Theorem~\ref{thm:typeB_enumeration}).

\section{Background}\label{sec:background}
In this section, we describe the Kim--Rhoades basis for $R_n^{(0,2)}$ \cite{KimRhoades2022} and the super-Artin basis for $R_n^{(1,1)}$ conjectured by Sagan--Swanson \cite{SaganSwanson2024} and proven by Angarone, Commins, Karn, Murai, and Rhoades \cite{Angarone2025}.

Define the set of \textit{modified Motzkin paths} of length $n$, $\Pi(n)_{>0}$, to be the set of all paths $\pi = (p_1,\ldots,p_n)$ in $\Z^2$ such that each step $p_i$ is one of: 
\begin{enumerate}[(a)]
    \item an up-step $(1,1)$,
    \item a horizontal step $(1,0)$ with decoration $\theta_i$,
    \item a horizontal step $(1,0)$ with decoration $\xi_i$,
    \item or a down-step $(1,-1)$ with decoration $\theta_i\xi_i$,
\end{enumerate}   
where the first step must be an up-step, and subsequently, the path never goes below the horizontal line $y=1$.\footnote{Kim and Rhoades defined the modified Motzkin paths in a slightly different manner, where there are no decorations on the down-steps, but they still contribute $\theta_i\xi_i$ to the weight. Furthermore, they defined the decorations to be just $\theta$ or $\xi$ instead of $\theta_i$ and $\xi_i$. Converting between these conventions is straightforward.}

Define the \textit{weight} $\wt(p_i)$ of a step $p_i$ of a modified Motzkin path to be its decoration, or $1$ if it does not have a decoration. Then define the \textit{weight} $\wt(\pi)$ of a modified Motzkin path $\pi \in \Pi(n)_{>0}$ to be the product of the weights of each step $p_i$, that is,
\[ \wt(\pi) := \prod_{p_i \in (p_1,\ldots,p_n) \, =\,  \pi} \wt(p_i).\]

\begin{definition}[\!\!{\cite{KimRhoades2022}}]
    The \textit{Kim--Rhoades basis} $B_n^{(0,2)}$ is the set of all weights of the modified Motzkin paths $\pi \in \Pi(n)_{>0}$, that is,
    \[ B_n^{(0,2)} := \{ \wt(\pi)\, | \, \pi \in \Pi(n)_{>0}\}.\]
\end{definition} 

For an example, see Figure~\ref{fig:Motzkin_3_example}. We are justified in calling it a basis because of the following result of Kim and Rhoades.

\begin{theorem}[\!\!\cite{KimRhoades2022}]
    The Kim--Rhoades basis $B_n^{(0,2)}$ is a basis for $R_n^{(0,2)}$.
\end{theorem}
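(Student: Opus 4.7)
The plan is to prove that $B_n^{(0,2)}$ is a basis by combining a spanning argument with a cardinality match. Specifically, I would show (i) the image of $B_n^{(0,2)}$ spans $R_n^{(0,2)}$ as a $\C$-vector space, and (ii) $|B_n^{(0,2)}| = \dim R_n^{(0,2)}$; together these force linear independence in $R_n^{(0,2)}$ and hence the basis property. For (ii), the cardinality $\binom{2n-1}{n}$ of $B_n^{(0,2)}$ follows from a first-return decomposition of the weighted modified Motzkin paths (at $u=v=1$), while $\dim R_n^{(0,2)} = \binom{2n-1}{n}$ can be obtained via a representation-theoretic argument, for instance by computing the Frobenius series in Equation~\ref{eq:02frob} and specializing at $u=v=1$.

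For the spanning step, I would encode each monomial $m$ in $\C[\bm\theta_n,\bm\xi_n]$ (working modulo $\theta_i^2 = \xi_i^2 = 0$) as a sequence $(t_1,\ldots,t_n)$ with $t_i \in \{1,\theta_i,\xi_i,\theta_i\xi_i\}$ recording which of $\theta_i,\xi_i$ divide $m$. This sequence traces a lattice path whose four step types correspond exactly to the decorations defining modified Motzkin paths, and $m \in B_n^{(0,2)}$ precisely when the associated path begins with an up-step and stays at height at least $1$. To reduce a monomial $m$ whose path violates either condition, I would fix a term order on the $(t_i)$-sequences (for example, reverse-lexicographic, reading $n$ down to $1$) and exhibit an $S_n$-invariant whose leading term is $m$. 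Natural invariants to exploit include the fermionic elementary and power-sum symmetric polynomials in the $\theta_i$'s and in the $\xi_j$'s separately, together with mixed polarized invariants of the form $\sum_i \theta_i \xi_i \cdot f_i(\bm\theta,\bm\xi)$ that couple the two alphabets. Reducing $m$ modulo such an invariant rewrites it as a combination of strictly smaller monomials, and an induction on the term order terminates with a representative supported on $B_n^{(0,2)}$.

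The main obstacle will be the spanning step: identifying a concrete family of invariants whose leading monomials cover the complement of $B_n^{(0,2)}$. Symmetric polynomials in $\bm\theta_n$ alone or $\bm\xi_n$ alone control only the total $\theta$- and $\xi$-content of $m$, but the Motzkin height constraint is a mixed condition: each down-step at position $i$ (contributing $\theta_i\xi_i$) must be preceded by an unmatched up-step to its left. Constructing polarized invariants that enforce this pairing, and verifying that the resulting straightening terminates, is the delicate part. One route is to produce an explicit Gröbner basis of the ideal $\langle \C[\bm\theta_n,\bm\xi_n]_+^{S_n} \rangle$, but a direct inductive argument should also work: reduce the left-most violation of the path conditions using an invariant tailored to that position, while tracking a decreasing statistic such as the lowest height attained below $y=1$ or the leftmost index at which the height drops. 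Once termination is established, the combination with the cardinality calculation in (ii) completes the proof.
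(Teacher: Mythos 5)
The paper states this as a cited theorem of Kim--Rhoades~\cite{KimRhoades2022} and gives no proof of its own, so there is nothing in the paper to compare your argument against; I evaluate your sketch on its own merits and against what Kim--Rhoades actually do.

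Your ``spanning plus cardinality match'' skeleton is sound in principle, but it has a circularity problem and an acknowledged gap. The circularity: you want $\dim R_n^{(0,2)}=\binom{2n-1}{n}$ and propose to obtain it from Equation~\ref{eq:02frob} at $u=v=1$, but that Frobenius series formula is Theorem~6.1 of Kim--Rhoades and is itself established in that paper alongside (and partly using) the basis result you are trying to prove. The dimension count was precisely Zabrocki's conjecture that Kim--Rhoades set out to resolve; it cannot be taken as known input. The gap: your spanning step correctly isolates the mixed invariants coupling the $\theta$ and $\xi$ alphabets (those involving factors $\theta_i\xi_i$) as the crux, since the Motzkin height constraint is a matching condition that single-alphabet invariants $e_k(\bm\theta)$, $e_k(\bm\xi)$ alone cannot enforce. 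But exhibiting a concrete family of invariants whose leading terms cover the complement of $B_n^{(0,2)}$, and verifying that the induction on your proposed well-founded statistic actually terminates, is the entire content of the spanning lemma; the proposal names the obstacle without overcoming it, so as written it is not a proof.

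Kim--Rhoades take a genuinely different route, signaled by the title of their paper. Rather than a Gr\"obner or straightening argument, they develop a Lefschetz theory for exterior algebras, analyzing multiplication by $\delta = \sum_i \theta_i\xi_i$ on $\Lambda[\bm\theta_n,\bm\xi_n]$ and proving structural results about the bidegree components of this operator. This furnishes an independent handle on the graded dimensions, simultaneously yielding the dimension $\binom{2n-1}{n}$, the basis theorem for the modified Motzkin monomials, and the Frobenius series, without the circularity your plan inherits. Your approach, if completed, would be more elementary and self-contained on the combinatorial side, but it would still require an independent derivation of the dimension; the Lefschetz machinery is doing real work that a generic term-order argument does not replicate.
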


\begin{figure}[ht]
    \centering
    \begin{tikzpicture}
        \def\sep{2.7}
        \node[anchor=south west] at (0, 0) {
            \motzkin{up,up,up}{1}
        };
        \node[anchor=south west] at (4*\sep, {0*\sep}) {
            \motzkin{up,flat-theta,up}{$\theta_2$}
        };
        \node[anchor=south west] at ({0*\sep}, {-1*\sep}) {
            \motzkin{up,flat-xi,up}{$\xi_2$}
        };
        \node[anchor=south west] at (\sep, {0*\sep}) {
            \motzkin{up,up,flat-theta}{$\theta_3$}
        };
        \node[anchor=south west] at ({2*\sep}, {0*\sep}) {
            \motzkin{up,up,flat-xi}{$\xi_3$}
        };
        \node[anchor=south west] at (1*\sep, {-1*\sep}) {
            \motzkin{up,flat-theta,flat-theta}{$\theta_2\theta_3$}
        };
        \node[anchor=south west] at ({3*\sep}, {0*\sep}) {
            \motzkin{up,up,down}{$\theta_3\xi_3$}
        };
        \node[anchor=south west] at ({3*\sep}, {-1*\sep}) {
            \motzkin{up,flat-xi,flat-theta}{$\xi_2\theta_3$}
        };
        \node[anchor=south west] at ({2*\sep}, {-1*\sep}) {
            \motzkin{up,flat-theta,flat-xi}{$\theta_2\xi_3$}
        };
        \node[anchor=south west] at ({4*\sep}, {-1*\sep}) {
            \motzkin{up,flat-xi,flat-xi}{$\xi_2\xi_3$}
        };
    \end{tikzpicture}
    \caption{The basis $B_3^{(0,2)}$. Each modified Motzkin path is labeled with its corresponding monomial.}
    \label{fig:Motzkin_3_example}
\end{figure}
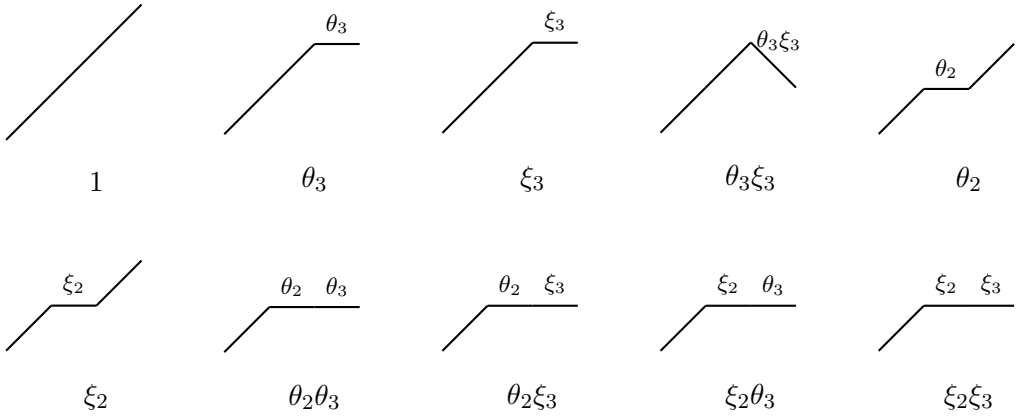

Next, we recall the super-Artin basis, defined by Sagan and Swanson. Let $\chi(P)$ be $1$ if the proposition $P$ is true, and $0$ if the proposition $P$ is false. Let $\theta_T$ denote the ordered product $\theta_{t_1} \cdots \theta_{t_k}$ for any subset $T = \{ t_1 < \cdots < t_k\} \subseteq \{1,\ldots,n\}$. For any $T \subseteq \{2,\ldots, n\}$, define the \textit{$\alpha$-sequence} $\alpha(T) = (\alpha_1(T), \ldots, \alpha_n(T))$ recursively by the initial condition $\alpha_1(T) = 0$ and for $2 \leq  i \leq n$, 
\[
    \alpha_i(T) =\alpha_{i-1}(T) + \chi(i \not \in T).
\]
Let $x^\alpha$ denote $x_1^{\alpha_1}\cdots x_n^{\alpha_n}$ for a sequence $\alpha = (\alpha_1,\ldots,\alpha_n)$.
\begin{definition}[\!\!\cite{SaganSwanson2024}]
    The super-Artin set is \[B_n^{(1,1)} := \{x^\alpha \theta_T \, | \, T \subseteq \{2,\ldots,n\} \text{ and } \alpha \leq \alpha(T) \text{ componentwise}\}.\]
\end{definition}

See Figure~\ref{fig:super_Artin_3_example} for an example. The following result was conjectured by Sagan and Swanson, and was proven by Angarone, Commins, Karn, Murai, and Rhoades.

\begin{theorem}[\!\!\cite{Angarone2025}]
    The super-Artin set $B_n^{(1,1)}$ is a basis for $R_n^{(1,1)}$.
\end{theorem}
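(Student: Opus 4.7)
The plan is to prove that $B_n^{(1,1)}$ spans $R_n^{(1,1)}$, then match dimensions to conclude linear independence. By the Rhoades--Wilson Hilbert series formula stated above, $\dim R_n^{(1,1)}$ specializes at $q = u = 1$ to the $n$-th ordered Bell number $a_n$. On the other hand, a direct count yields $|B_n^{(1,1)}| = \sum_{T \subseteq \{2,\ldots,n\}} \prod_{i=1}^n (\alpha_i(T)+1)$, and this sum also equals $a_n$: each summand corresponds to an ordered set partition of $\{1,\ldots,n\}$ in which $T$ records the non-minimum elements of each block and $\alpha$ records how the element at position $i$ sits among the available blocks. Hence $|B_n^{(1,1)}| = \dim R_n^{(1,1)}$, so it suffices to establish spanning.

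For spanning, I would set up a straightening algorithm on arbitrary monomials $x^\beta \theta_S$ (with $\beta \in \Z_{\geq 0}^n$ and $S \subseteq \{1,\ldots,n\}$) relative to a monomial order refining bidegree in $\bm{x}_n$ and $\bm{\theta}_n$. Two families of $S_n$-invariants inside the ideal $\langle \C[\bm{x}_n,\bm{\theta}_n]_+^{S_n} \rangle$ drive the reductions: the classical power sums $p_k(\bm{x}_n) = \sum_i x_i^k$, which handle $x$-excess in the standard Artin style, and the polarized invariants $q_k := \sum_i x_i^k \theta_i$, which reduce $\theta$-excess. In particular $q_0 = \theta_1 + \cdots + \theta_n$ immediately reduces the case $1 \in S$ to $S \subseteq \{2,\ldots,n\}$, while the higher $q_k$ can be used to enforce the coordinate bound $\beta \leq \alpha(S)$ by trading exponents on $x_i \theta_i$ for combinations supported on indices $j$ with $\theta_j$ absent from the monomial.

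The main obstacle is to verify that iterated application of these reductions terminates in $B_n^{(1,1)}$, i.e., that no monomial outside $B_n^{(1,1)}$ is irreducible. The cleanest route is to exhibit an explicit Gröbner basis for the $S_n$-invariant ideal whose initial ideal is generated by the complement of $B_n^{(1,1)}$ in the monomial basis of $\C[\bm{x}_n,\bm{\theta}_n]$; equivalently, one must show that $\{p_k, q_k\}$, together with any polynomials required by Buchberger completion, have the correct leading monomials under the chosen order. This step is delicate because anticommutativity of the $\theta_i$ introduces sign and reordering subtleties in the $S$-polynomial computations, and because the critical identity behind termination is the recursive description of $\alpha_i(T)$ as the number of indices $j \in \{2,\ldots,i\}$ with $j \notin T$, which is exactly the \emph{room} in position $i$ that the available reductions can fill. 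Reconciling the sign bookkeeping with this combinatorial bound, and confirming that the reduction process strictly decreases the leading monomial at each step, is where the substantive work of the Angarone et al.\ proof lies; once this is in hand, spanning follows, and the dimension match above promotes spanning to the desired basis statement.
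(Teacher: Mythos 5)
The paper cites this theorem from Angarone et al.\ without proof, so there is no in-paper argument to compare against; I am evaluating your sketch against the cited reference. Your cardinality calculation is correct (it is essentially the Sagan--Swanson bijection with ordered set partitions), and combined with the Rhoades--Wilson Hilbert series it does reduce the basis claim to spanning. But the spanning argument, as you acknowledge, is incomplete: you name the reductions you would use (power sums $p_k(\bm{x}_n)$ and polarized invariants $q_k = \sum_i x_i^k\theta_i$) but defer the entire question of whether there is a monomial order under which a Gröbner basis of $\langle \C[\bm{x}_n,\bm{\theta}_n]^{S_n}_+\rangle$ has initial ideal exactly the complement of $B_n^{(1,1)}$ --- i.e.\ whether the $T$-dependent bound $\alpha \le \alpha(T)$ actually emerges from these reductions and their Buchberger completions. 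That termination claim is the whole theorem; without it what you have written is a plan, not a proof.

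Your attribution is also off. The Angarone et al.\ proof does not proceed by straightening or by computing a Gröbner basis in $\C[\bm{x}_n,\bm{\theta}_n]$. It instead identifies the superspace coinvariant ring with a Solomon--Terao algebra attached to the braid arrangement and uses a filtration of Solomon--Terao algebras due to Abe, Maeno, Murai, and Numata to produce the super-Artin monomials as a basis. This is a structural argument from hyperplane-arrangement theory, quite different from the combinatorial reduction you outline, so the step you describe as ``where the substantive work of the Angarone et al.\ proof lies'' is not in fact in their paper.
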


\begin{figure}[ht]
    \centering
    \begin{tikzpicture}
        \def\sep{2.7}
        \node[anchor=south west] at (0, 0) {
            \motzkinpath{skip,skip,skip}{0,1,2}{0,0,0}{1}
        };
        \node[anchor=south west] at ({\sep}, 0) {
            \motzkinpath{skip,skip,skip}{0,1,2}{0,0,1}{$x_3$}
        };
        \node[anchor=south west] at ({2*\sep}, 0) {
            \motzkinpath{skip,skip,skip}{0,1,2}{0,1,0}{$x_2$}
        };
        \node[anchor=south west] at ({3*\sep}, 0) {
            \motzkinpath{skip,skip,skip}{0,1,2}{0,1,1}{$x_2x_3$}
        };
        \node[anchor=south west] at ({4*\sep}, 0) {
            \motzkinpath{skip,skip,skip}{0,1,2}{0,0,2}{$x_3^2$}
        };
        \node[anchor=south west] at ({0*\sep}, -1*\sep) {
            \motzkinpath{skip,skip,skip}{0,1,2}{0,1,2}{$x_2x_3^2$}
        };
        \node[anchor=south west] at (1*\sep, {-2*\sep}) {
            \motzkinpath{skip,skip,skip}{0,0,1}{0,0,0}{$\theta_2$}
        };
        \node[anchor=south west] at ({2*\sep}, {-1*\sep}) {
            \motzkinpath{skip,skip,skip}{0,1,1}{0,0,1}{$x_3\theta_3$}
        };
        \node[anchor=south west] at ({4*\sep}, {-1*\sep}) {
            \motzkinpath{skip,skip,skip}{0,1,1}{0,1,1}{$x_2x_3\theta_3$}
        };
        \node[anchor=south west] at (\sep, {-1*\sep}) {
            \motzkinpath{skip,skip,skip}{0,1,1}{0,0,0}{$\theta_3$}
        };
        \node[anchor=south west] at ({3*\sep}, {-1*\sep}) {
            \motzkinpath{skip,skip,skip}{0,1,1}{0,1,0}{$x_2\theta_3$}
        };
        \node[anchor=south west] at ({2*\sep}, {-2*\sep}) {
            \motzkinpath{skip,skip,skip}{0,0,1}{0,0,1}{$x_3\theta_2$}
        };
        \node[anchor=south west] at (3*\sep, {-2*\sep}) {
            \motzkinpath{skip,skip,skip}{0,0,0}{0,0,0}{$\theta_2\theta_3$}
        };
    \end{tikzpicture}
    \caption{The basis $B_3^{(1,1)}$. The $\alpha$-sequence is shown as the outline of all boxes, and those $x_i$ used for a particular basis element are shaded in gray.}
    \label{fig:super_Artin_3_example}
\end{figure}
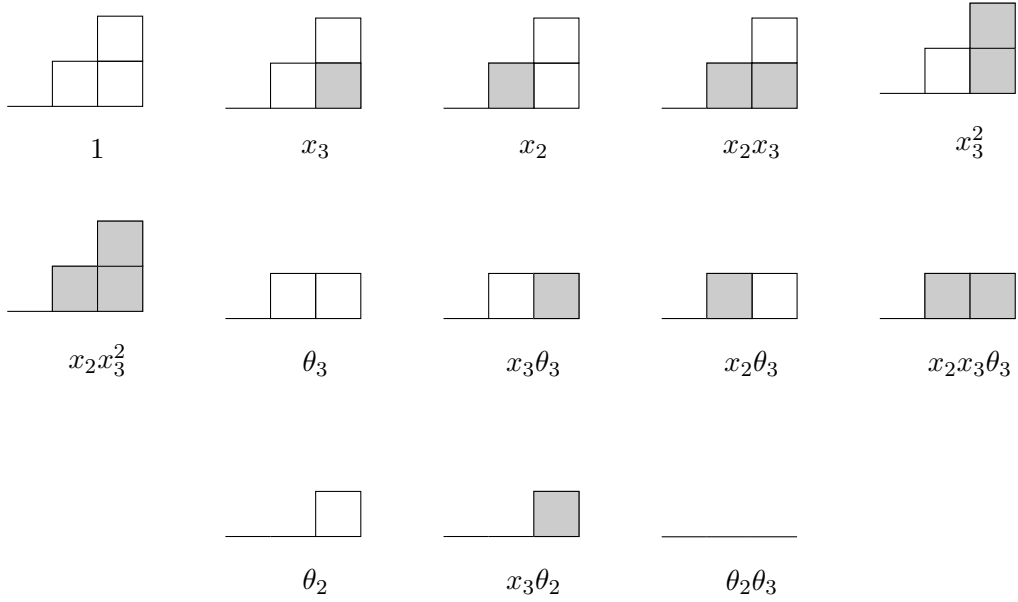

In this paper, we assume familiarity with the basics of symmetric function theory (see for example \cite[Chapter I]{Macdonald} or \cite[Chapter 7]{StanleyEC2}). Let $m_\lambda$, $e_\lambda$, $h_\lambda$, $p_\lambda$, and $s_\lambda$ denote respectively the monomial, elementary, complete homogeneous, power-sum, and Schur symmetric functions in infinitely many variables. 

Finally, we record the definitions of the multigraded Hilbert and Frobenius series of $R_n^{(k,j)}$ (see for example \cite{Bergeron2020}). Fix integers $k,j \geq 0$.
$R_n^{(k,j)}$ decomposes as a direct sum of multihomogenous components, which are $\mathfrak{S}_n$-modules:
\[R_n^{(k,j)} = \bigoplus_{r_1,\ldots,r_k, s_1, \ldots, s_j \geq 0} (R_n^{(k,j)})_{r_1,\ldots,r_k, s_1, \ldots, s_j}.\]
We denote the multigraded Hilbert series by 
\begin{align*} &\Hilb(R_n^{(k,j)}; q_1, \ldots, q_k; u_1,\ldots, u_j)\\&\quad:= \sum_{r_1,\ldots,r_k, s_1, \ldots, s_j \geq 0}\dim \left((R_n^{(k,j)})_{r_1,\ldots,r_k, s_1, \ldots, s_j} \right)q_1^{r_1} \cdots q_k^{r_k}u_1^{s_1} \cdots u_j^{s_j},\end{align*}
and the multigraded Frobenius series by 
\begin{align*} &\Frob(R_n^{(k,j)}; q_1, \ldots, q_k; u_1,\ldots, u_j)\\&\quad:= \sum_{r_1,\ldots,r_k, s_1, \ldots, s_j \geq 0} F\Char\left((R_n^{(k,j)})_{r_1,\ldots,r_k, s_1, \ldots, s_j} \right)q_1^{r_1} \cdots q_k^{r_k}u_1^{s_1} \cdots u_j^{s_j},\end{align*}
where $F$ denotes the Frobenius characteristic map and $\Char$ denotes the character.
For simplicity, if $k \leq 2$, we will use $q,t$ for $q_1,q_2$ and if $j \leq 2$, we will use $u,v$ for $u_1,u_2$. Recall that $\langle \Frob(R_n^{(k,j)}; q_1, \ldots, q_k; u_1,\ldots, u_j), h_1^n \rangle = \Hilb(R_n^{(k,j)}; q_1, \ldots, q_k; u_1,\ldots, u_j)$.

\section{The conjectural monomial basis}\label{sec:thebasis}

The goal of this section is to interpolate between the Kim--Rhoades basis and the super-Artin basis to construct a new set $B_n^{(1,2)}$, which we conjecture to be a basis for $R_n^{(1,2)}$.

We generalize the $\alpha$-sequence as follows. Let $\xi_S$ denote the ordered product $\xi_{s_1} \cdots \xi_{s_k}$ for any subset $S = \{ s_1 < \cdots < s_k\} \subseteq \{1,\ldots,n\}$. For any $T,S \subseteq \{2,\ldots,n\}$, define the \textit{generalized $\alpha$-sequence} $\alpha(T,S) = (\alpha_1(T,S), \ldots, \alpha_n(T,S))$ recursively by the initial condition $\alpha_1(T,S) = 0$ and for $2 \leq  i \leq n$, 
\[
    \alpha_i(T,S) =\alpha_{i-1}(T,S) -1 + \chi(i \not \in T) + \chi(i \not \in S).
\]

\begin{definition}\label{def:B_n^{(1,2)}}
     We let \[B_n^{(1,2)} := \{x^\alpha \theta_T \xi_S \, | \, \theta_T \xi_S \in B_n^{(0,2)} \text{ and } 0 \leq \alpha_i \leq \alpha_i(T,S) \text{ for all } i \in \{1,\ldots,n\}\}.\]
\end{definition}

See Figure~\ref{fig:Basis_3_example} for an example of $B_n^{(1,2)}$ at $n=3$.

\begin{figure}[ht]
    \centering
    \begin{tikzpicture}
        \def\sep{2.7}
        \node[anchor=south west] at (0, 0) {
            \motzkinpath{up,up,up}{0,1,2}{0,0,0}{1}
        };
        \node[anchor=south west] at ({\sep}, 0) {
            \motzkinpath{up,up,up}{0,1,2}{0,0,1}{$x_3$}
        };
        \node[anchor=south west] at ({2*\sep}, 0) {
            \motzkinpath{up,up,up}{0,1,2}{0,1,0}{$x_2$}
        };
        \node[anchor=south west] at ({3*\sep}, 0) {
            \motzkinpath{up,up,up}{0,1,2}{0,1,1}{$x_2x_3$}
        };
        \node[anchor=south west] at (0, {-1*\sep}) {
            \motzkinpath{up,up,up}{0,1,2}{0,0,2}{$x_3^2$}
        };
        \node[anchor=south west] at ({\sep}, {-1*\sep}) {
            \motzkinpath{up,up,up}{0,1,2}{0,1,2}{$x_2x_3^2$}
        };
        \node[anchor=south west] at ({2*\sep}, {-1*\sep}) {
            \motzkinpath{up,up,flat-theta}{0,1,1}{0,0,0}{$\theta_3$}
        };
        \node[anchor=south west] at ({3*\sep}, {-1*\sep}) {
            \motzkinpath{up,up,flat-theta}{0,1,1}{0,0,1}{$x_3\theta_3$}
        };
        \node[anchor=south west] at (0, {-2*\sep}) {
            \motzkinpath{up,up,flat-theta}{0,1,1}{0,1,0}{$x_2\theta_3$}
        };
        \node[anchor=south west] at ({\sep}, {-2*\sep}) {
            \motzkinpath{up,up,flat-theta}{0,1,1}{0,1,1}{$x_2x_3\theta_3$}
        };
        \node[anchor=south west] at ({2*\sep}, {-2*\sep}) {
            \motzkinpath{up,up,flat-xi}{0,1,1}{0,0,0}{$\xi_3$}
        };
        \node[anchor=south west] at ({3*\sep}, {-2*\sep}) {
            \motzkinpath{up,up,flat-xi}{0,1,1}{0,0,1}{$x_3\xi_3$}
        };  
        \node[anchor=south west] at (0, {-3*\sep}) {
            \motzkinpath{up,up,flat-xi}{0,1,1}{0,1,0}{$x_2\xi_3$}
        };
        \node[anchor=south west] at ({1*\sep}, {-3*\sep}) {
            \motzkinpath{up,up,flat-xi}{0,1,1}{0,1,1}{$x_2x_3\xi_3$}
        };
        \node[anchor=south west] at ({2*\sep}, {-3*\sep}) {
            \motzkinpath{up,up,down}{0,1,0}{0,0,0}{$\theta_3\xi_3$}
        };
        \node[anchor=south west] at ({3*\sep}, {-3*\sep}) {
            \motzkinpath{up,up,down}{0,1,0}{0,1,0}{$x_2\theta_3\xi_3$}
        };
        \node[anchor=south west] at (0, {-4*\sep}) {
            \motzkinpath{up,flat-theta,up}{0,0,1}{0,0,0}{$\theta_2$}
        };
        \node[anchor=south west] at ({\sep}, {-4*\sep}) {
            \motzkinpath{up,flat-theta,up}{0,0,1}{0,0,1}{$x_3\theta_2$}
        };    
        \node[anchor=south west] at ({2*\sep}, {-4*\sep}) {
            \motzkinpath{up,flat-xi,up}{0,0,1}{0,0,0}{$\xi_2$}
        };
        \node[anchor=south west] at ({3*\sep}, {-4*\sep}) {
            \motzkinpath{up,flat-xi,up}{0,0,1}{0,0,1}{$x_3\xi_2$}
        };
        \node[anchor=south west] at (0, {-5*\sep}) {
            \motzkinpath{up,flat-theta,flat-theta}{0,0,0}{0,0,0}{$\theta_2\theta_3$}
        };
        \node[anchor=south west] at ({\sep}, {-5*\sep}) {
            \motzkinpath{up,flat-theta,flat-xi}{0,0,0}{0,0,0}{$\theta_2\xi_3$}
        };
        \node[anchor=south west] at ({2*\sep}, {-5*\sep}) {
            \motzkinpath{up,flat-xi,flat-theta}{0,0,0}{0,0,0}{$\xi_2\theta_3$}
        };
        \node[anchor=south west] at ({3*\sep}, {-5*\sep}) {
            \motzkinpath{up,flat-xi,flat-xi}{0,0,0}{0,0,0}{$\xi_2\xi_3$}
        };
    \end{tikzpicture}
    \caption{The basis $B_3^{(1,2)}$. Below each modified Motzkin path is the outline of the generalized $\alpha$-sequence, and those $x_i$ used for a particular basis element are shaded in gray. Note that the outline of the generalized $\alpha$-sequence can be determined by the following rule: in each position $i$, the column of boxes extends up until its right side is one unit below the path above it.}
    \label{fig:Basis_3_example}
\end{figure}
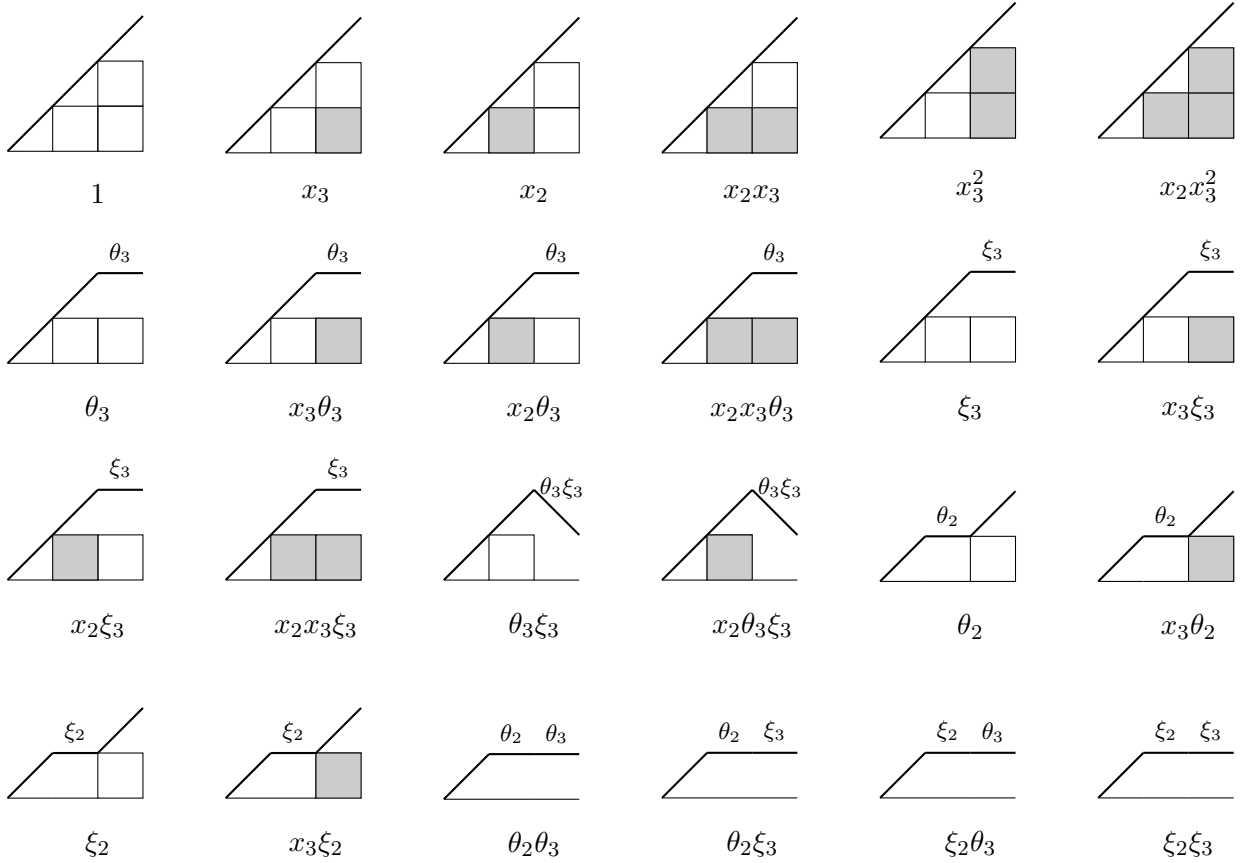

\begin{conjecture}\label{conj:R_n^{(1,2)}}
    The set $B_n^{(1,2)}$ is a basis for $R_n^{(1,2)}$.
\end{conjecture}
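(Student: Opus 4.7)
The plan is to prove Conjecture~\ref{conj:R_n^{(1,2)}} by a sandwich argument: show that $B_n^{(1,2)}$ both spans $R_n^{(1,2)}$ and is linearly independent in it. Since Theorem~\ref{thm:cardinality} already establishes $|B_n^{(1,2)}| = 2^{n-1}n!$, this would simultaneously prove the basis conjecture and confirm Zabrocki's dimension conjecture as a corollary.

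For \emph{spanning}, I would develop an explicit straightening algorithm using a monomial order refining the multigrading (e.g., lexicographic in the $x$-variables, further refined by the $\theta$- and $\xi$-supports). The goal is to exhibit, for each monomial $x^\beta \theta_T \xi_S$ not in $B_n^{(1,2)}$, an element of $\langle \C[\bm{x}_n, \bm{\theta}_n, \bm{\xi}_n]_+^{S_n} \rangle$ whose leading term equals $x^\beta \theta_T \xi_S$ and whose lower-order terms are supported on $B_n^{(1,2)}$ after recursion. Three families of ideal elements should suffice: (i) fermionic relations adapted from Kim--Rhoades's proof for $R_n^{(0,2)}$ that force $\theta_T \xi_S \in B_n^{(0,2)}$; (ii) bosonic relations from the Artin/super-Artin setting, obtained from $e_k(x_1, \ldots, x_n)$ and their partial-derivative descendants, that bound individual $x$-powers when $T = S = \emptyset$; and (iii) genuinely \emph{mixed} relations, built from polarizations of $S_n$-invariants of the form $\sum_i x_i^k \theta_i$, $\sum_i x_i^k \xi_i$, and $\sum_i x_i^k \theta_i \xi_i$, whose role is to enforce the drop in the generalized $\alpha$-sequence at positions $i \in T \cap S$ (the ``down-step'' positions of the underlying Motzkin path).

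For \emph{linear independence}, one promising route is to exploit the weight-preserving bijection with segmented permutations (Theorem~\ref{thm:bijection}) by realizing the elements of $B_n^{(1,2)}$ as images of segmented permutations under an injective map into an $S_n$-module of the correct dimension; a natural candidate is a module built from the $t = 0$ specialization of the Theta-operator construction of equation~\ref{eq:thetaconj}. A second route is to adapt the Gr\"obner-basis argument of Angarone et al.\ for $R_n^{(1,1)}$, verifying that their reduction order remains compatible when the standard $\alpha$-sequence is replaced by our generalized $\alpha(T,S)$, which requires handling the new phenomenon that $\alpha_i(T,S)$ can strictly decrease as $i$ grows.

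The \textbf{main obstacle} will be producing the mixed straightening relations of step (iii). The simultaneous interaction of the $x$-variables with \emph{both} sets of anticommuting variables is genuinely new: neither Kim--Rhoades (no $x$'s) nor Angarone et al.\ (only one set of anticommuting variables) had to confront it. In particular, constructing an ideal element whose leading term is $x_i^{\alpha_i(T,S)+1} \theta_T \xi_S$ when $i \in T \cap S$ requires a delicate cancellation between $S_n$-invariants of different fermionic bidegrees, and the bound $\alpha_i(T,S)$ sits strictly below the super-Artin bound in precisely this case. I expect this step will require either computer-algebra experimentation to identify the needed low-degree relations at small $n$, or a more conceptual approach via viewing the ideal as a module over the subring $\C[\bm{\theta}_n, \bm{\xi}_n]$ and reducing fiber-by-fiber to the purely bosonic setting.
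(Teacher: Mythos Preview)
The statement you are attempting to prove is labeled in the paper as a \emph{Conjecture}, not a theorem, and the paper does \textbf{not} contain a proof of it. The author explicitly writes, immediately after stating it, that ``The conjecture has been verified for $n \leq 4$'' and later, in the introduction, ``In future work, we hope to make additional progress on this conjecture.'' The supporting evidence provided in the paper is (i) the cardinality count $|B_n^{(1,2)}| = 2^{n-1}n!$ matching Zabrocki's conjectured dimension, (ii) computer verification through $n=4$, and (iii) compatibility with the conjectural Hilbert and Frobenius series of Iraci--Nadeau--Vanden Wyngaerd via the bijection $\psi$. None of this constitutes a proof, and the paper makes no claim that it does.

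Your proposal is likewise not a proof but a research outline, and you yourself flag this honestly in the final paragraph. The spanning strategy you sketch is reasonable as a plan, but the crucial step (iii)---producing mixed straightening relations that force $\alpha_i \leq \alpha_i(T,S)$ when $i \in T \cap S$---is exactly the new phenomenon that distinguishes $R_n^{(1,2)}$ from both $R_n^{(0,2)}$ and $R_n^{(1,1)}$, and you give no concrete construction of these relations, only the observation that they would need to exist. Your linear-independence suggestions are even more speculative: invoking the $t=0$ Theta-operator module presupposes the Theta conjecture (or at least its $t=0$ specialization), which is itself open; and ``adapting'' the Gr\"obner argument of Angarone et al.\ to a sequence that can strictly decrease is not a small modification---their entire transfer-operator machinery relies on the monotonicity of the super-Artin $\alpha$-sequence. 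In short, there is no gap to name because there is no proof here to have a gap in; what you have written is a sensible first sketch of how one might attack an open problem.
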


\begin{remark}
The conjecture has been verified for $n \leq 4$.
\end{remark}

\begin{remark}
    $B_n^{(1,2)}$ specializes to the super-Artin basis $B_n^{(1,1)}$ by setting all $\xi_i = 0$. Then the Kim--Rhoades basis elements permitted in the definition will always have $S = \varnothing$, so that condition becomes just $T \subseteq \{2,\ldots, n\}$. Thus $\alpha(T,S) = \alpha(T,\varnothing) = \alpha(T)$.
\end{remark}

\begin{remark}
    $B_n^{(1,2)}$ specializes to the Kim--Rhoades basis $B_n^{(0,2)}$ by setting all $x_i = 0$.
\end{remark}

Next, we consider the implications of Conjecture~\ref{conj:R_n^{(1,2)}} on the Hilbert series of $R_n^{(1,2)}$. To do so, define 
\[\stair_q(\pi) := \prod_{k \in \alpha(T(\pi),S(\pi))} [k+1]_q,\]
where $T(\pi)$ and $S(\pi)$ are determined by which elements in $\{2,\ldots,n\}$ appear as indices for $\theta_i$ and $\xi_i$ respectively in the weight of the modified Motzkin path $\pi$.

For a modified Motzkin path $\pi$, let $\deg_\theta(\pi) = |T(\pi)|$ and let $\deg_\xi(\pi) = |S(\pi)|$. 
\begin{proposition}\label{prop:Hilb(R_n^{(1,2)})}
    Assuming Conjecture~\ref{conj:R_n^{(1,2)}}, it follows that the Hilbert series of $R_n^{(1,2)}$ is 
    \[ \Hilb(R_n^{(1,2)};q;u,v) = \sum_{\pi \in \Pi(n)_{>0}} u^{\deg_\theta(\pi)} v^{\deg_\xi(\pi)} \stair_q(\pi).\]
\end{proposition}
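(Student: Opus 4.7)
The plan is to observe that, under Conjecture~\ref{conj:R_n^{(1,2)}}, the Hilbert series is by definition the multigraded generating function
\[
\Hilb(R_n^{(1,2)};q;u,v) = \sum_{b \in B_n^{(1,2)}} q^{\deg_x(b)} u^{\deg_\theta(b)} v^{\deg_\xi(b)},
\]
so the entire task is to reorganize the right-hand side into a sum over modified Motzkin paths. I would group the basis elements of $B_n^{(1,2)}$ according to their fermionic factor $\theta_T \xi_S \in B_n^{(0,2)}$, which by the Kim-Rhoades bijection corresponds bijectively to a modified Motzkin path $\pi \in \Pi(n)_{>0}$ with $\wt(\pi) = \theta_T \xi_S$.

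For a fixed such $\pi$, the fermionic degrees are constant across the corresponding block of $B_n^{(1,2)}$: every $b = x^\alpha \theta_T \xi_S$ in that block contributes $u^{\deg_\theta(\pi)} v^{\deg_\xi(\pi)}$. The remaining factor is $\sum_{\alpha} q^{|\alpha|}$, where $\alpha$ ranges over tuples with $0 \leq \alpha_i \leq \alpha_i(T,S)$ for each $i \in \{1,\ldots,n\}$. Because the constraints decouple over the coordinates, this sum factors as
\[
\prod_{i=1}^{n} \sum_{a=0}^{\alpha_i(T,S)} q^{a} = \prod_{i=1}^{n} [\alpha_i(T,S)+1]_q = \prod_{k \in \alpha(T,S)} [k+1]_q = \stair_q(\pi),
\]
where the last equality is the definition of $\stair_q(\pi)$ given just before the proposition.

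Combining these two observations, the block attached to $\pi$ contributes exactly $u^{\deg_\theta(\pi)} v^{\deg_\xi(\pi)} \stair_q(\pi)$, and summing over all $\pi \in \Pi(n)_{>0}$ yields the claimed formula. There is essentially no obstacle here: once the conjecture is granted, the result is a bookkeeping computation that factors the $x$-sum along coordinates and uses the fact that the Kim-Rhoades basis is indexed by modified Motzkin paths. The only thing to double-check is that $\deg_\theta$ and $\deg_\xi$ of $b$ really agree with $\deg_\theta(\pi)$ and $\deg_\xi(\pi)$, which is immediate from the definition $\wt(\pi) = \theta_T \xi_S$ and the fact that $x^\alpha$ contributes nothing to these degrees.
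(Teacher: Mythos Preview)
Your proposal is correct and follows essentially the same approach as the paper: group the basis elements by their underlying modified Motzkin path $\pi$, observe that the fermionic degrees are determined by $\pi$, and factor the remaining $x$-sum coordinatewise into $\prod_i [\alpha_i(T,S)+1]_q = \stair_q(\pi)$. Your write-up is in fact slightly more explicit than the paper's, which states the same bookkeeping more tersely.
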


\begin{proof}
The number of $\theta_i$ weights contributes to the $u$-degree and the number of $\xi_i$ weights contributes to the $v$-degree. 
Then regarding the $q$-degree, for each index $i \in \{1,\ldots, n\}$, let $k = \alpha_i(T,S)$. Then there will be $k+1$ choices for the exponent of $x_i$ in the monomial, from $0$ to $k$.
So the contribution of $x_i$ to the Hilbert series is $[k+1]_q$, and across all indices $i$, it contributes $\stair_q(\pi)$. 
Summing over the contributions of all elements $\pi \in \Pi(n)_{>0}$ gives the result.
\end{proof}

Finally, we show that the proposed basis $B_n^{(1,2)}$ has Zabrocki's conjectured dimension.

\begin{theorem}\label{thm:cardinality}
    The cardinality of $B_n^{(1,2)}$ is $2^{n-1}n!$.
\end{theorem}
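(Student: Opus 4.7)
The plan is to realize $|B_n^{(1,2)}|$ as a weighted sum over modified Motzkin paths and then to evaluate that sum via a generating function argument. First, by unwinding the recursive definition of $\alpha_i(T,S)$ in Definition~\ref{def:B_n^{(1,2)}}, a direct induction on $i$ shows that $\alpha_i(T,S) = h_i(\pi) - 1$, where $h_i(\pi)$ is the height of $\pi$ after its $i$-th step: the four cases (up-step, $H_\theta$, $H_\xi$, down-step) precisely mirror the height contributions $+1, 0, 0, -1$, and $h_1 = 1$ because the first step must be an up-step. Since each $\alpha_i$ ranges over $\{0, 1, \ldots, \alpha_i(T,S)\}$, this gives $h_i$ choices at position $i$, hence
\begin{equation*}
|B_n^{(1,2)}| \;=\; \sum_{\pi \in \Pi(n)_{>0}} \prod_{i=1}^n h_i(\pi).
\end{equation*}
Since the forced initial up-step contributes the factor $h_1 = 1$, this equals $g_{n-1}(1)$, where for $N \geq 0$ and $h \geq 1$, $g_N(h) := \sum_{\pi'} \prod_{i=1}^N h_i(\pi')$ sums over length-$N$ weighted paths $\pi'$ of the same four step types that start at height $h$ and stay at height $\geq 1$.

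Conditioning on the first step of $\pi'$ yields the recursion $g_N(h) = (h+1) g_{N-1}(h+1) + 2h\, g_{N-1}(h) + (h-1) g_{N-1}(h-1)$ for $h \geq 1$, with $g_0(h) = 1$. The key step is to solve this via the ordinary generating function $F_N(y) := \sum_{h \geq 1} g_N(h)\, y^h$; a standard index shift translates the recursion into
\begin{equation*}
F_N(y) \;=\; (1+y)^2\, F'_{N-1}(y) \;-\; g_{N-1}(1),
\end{equation*}
the subtraction capturing the boundary contribution at $h = 1$ (where the shift in $(h+1)g_{N-1}(h+1)$ drops out). By induction on $N$ I would prove the closed form
\begin{equation*}
F_N(y) \;=\; 2^{N-1}\, N! \cdot \frac{(1+y)^{N+1} - (1-y)^{N+1}}{(1-y)^{N+1}} \qquad (N \geq 1).
\end{equation*}
The base case $F_1(y) = 4y/(1-y)^2$ is immediate from $F_0(y) = y/(1-y)$ and $F_0'(y) = 1/(1-y)^2$. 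For the inductive step, the identity $\tfrac{d}{dy}\bigl(\tfrac{1+y}{1-y}\bigr)^{N} = \tfrac{2N(1+y)^{N-1}}{(1-y)^{N+1}}$ gives $(1+y)^2 F'_{N-1}(y) = 2^{N-1} N! \cdot \bigl(\tfrac{1+y}{1-y}\bigr)^{N+1}$, and subtracting the constant $g_{N-1}(1) = 2^{N-1}\, N!$ (which is both supplied by the inductive hypothesis and independently forced by the constraint $F_N(0) = 0$) produces exactly the claimed form.

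Finally, extracting the coefficient of $y$ from the closed form gives $g_N(1) = 2^{N-1} N! \cdot 2(N+1) = 2^N (N+1)!$, since a routine expansion yields $[y^1]\, (1+y)^{N+1}(1-y)^{-(N+1)} = (N+1) + (N+1) = 2(N+1)$. Therefore $|B_n^{(1,2)}| = g_{n-1}(1) = 2^{n-1}\, n!$, as desired. The main obstacle is guessing the closed form for $F_N(y)$: the recursion mixes a derivative with a subtraction and admits no obvious telescoping, but computing $F_1, F_2, F_3$ reveals the clean pattern $c_N \cdot ((1+y)^{N+1} - (1-y)^{N+1})/(1-y)^{N+1}$, after which $c_N = 2^{N-1} N!$ is forced by matching constants, and the inductive verification becomes routine.
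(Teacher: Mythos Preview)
Your proof is correct. Both you and the paper begin from the same key observation---that $\alpha_i(T,S)+1$ equals the height $h_i$ of the path after step $i$, so $|B_n^{(1,2)}|$ is a height-weighted sum over modified Motzkin paths---and both solve the resulting problem via a length recursion with a height parameter. The difference is in how the recursion is solved. The paper indexes by the \emph{ending} height $r$ of the full path, conditions on the last step, and guesses the simple closed form $p(n,r)=n!\binom{n-1}{r}$; the verification is a two-line binomial identity and the final sum over $r$ is immediate. You instead index by the \emph{starting} height $h$ of the truncated path, condition on the first step, and package the sequence $g_N(h)$ into an ordinary generating function $F_N(y)$, which you solve in closed form by induction. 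Your route is longer and requires discovering the form of $F_N(y)$, but it has the virtue of being more mechanical once the generating-function framework is set up; the paper's argument is shorter but depends on guessing the bivariate formula $n!\binom{n-1}{r}$ up front. One minor remark: at $h=1$ your recursion formally involves $(h-1)g_{N-1}(h-1)=0\cdot g_{N-1}(0)$, and you should note explicitly that this term vanishes regardless of how $g_{N-1}(0)$ is interpreted, so the boundary causes no trouble.
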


\begin{proof}
    Let $P(n,r)$ denote the subset of elements of $B_n^{(1,2)}$ corresponding to modified Motzkin paths $\pi \in \Pi(n)_{>0}$ which end at height $r$. We claim that $p(n,r) := |P(n,r)| = n!\binom{n-1}{r}$, which we show by induction on $n$. The base cases consist of 
    \[ p(0,r) = \begin{cases}
        1& \text{ if } r=n,\\
        0& \text{ otherwise.}
    \end{cases}\]

    Assume the claim holds for arbitrary, fixed $n-1$ (for all $r$), and we wish to show it holds for $n$ (for all $r$). Note that $p(n,r)=0$ whenever $r < 0$ or $r > n$. 
    
    If the final step of $\pi$ is an up-step, then for each element in $P(n-1,r-1)$, we can multiply it by one of the $r+1$ choices $1, x_n, x_n^2, \ldots, x_n^r$ to get such an element in $P(n,r)$.
    
    If the final step of $\pi$ is a horizontal step, then for each element in $P(n-1,r)$, we can multiply it by one of the $r+1$ choices $1, x_n, x_n^2, \ldots, x_n^r$, and by one of the two weights $\theta_n, \xi_n$, depending on the decoration, to get such an element in $P(n,r)$.

    If the final step of $\pi$ is a down-step, then for each element in $P(n-1,r+1)$, we can multiply it by one of the $r+1$ choices $1, x_n, x_n^2, \ldots, x_n^r$, along with $\theta_n\xi_n$, to get such an element in $P(n,r)$.

    Thus
    \begin{align*}
        p(n,r) &= (r+1)p(n-1,r-1) + 2(r+1)p(n-1,r) + (r+1)p(n-1,r+1)\\
        &= (r+1)\left( (n-1)!\binom{n-2}{r-1} + 2(n-1)!\binom{n-2}{r} + (n-1)!\binom{n-2}{r+1}\right)\\
        &= (r+1)(n-1)!\binom{n}{r+1}\\
        &= n!\binom{n-1}{r},
    \end{align*}
    where we used the inductive hypothesis and some binomial coefficient identities.
    Finally, sum over all possible heights $r$ to get that $|B_n^{(1,2)}| = \sum_r n!\binom{n-1}{r} = 2^{n-1}n!$, as desired.
\end{proof}

\begin{remark}
    The argument in this enumeration is adapted from a similar argument of Corteel--Nunge~\cite[Lemma 17]{CorteelNunge2020} on marked Laguerre histories, which also are enumerated by $2^{n-1}n!$.
\end{remark}

\section{A conjectural Frobenius series}\label{sec:conj_frob}

The goal of this section is to demonstrate how the conjectural basis can be used to propose a Frobenius series for $R_n^{(1,2)}$.

Recall that the \textit{fundamental quasisymmetric function} $Q_{S,n}$ is defined by 
\[ Q_{S,n} = \sum_{\substack{a_1 \leq a_2 \leq \cdots \leq a_n,\\ a_i < a_{i+1} \text{ if } i \in S}} z_{a_1}z_{a_2}\cdots z_{a_n} ,\]
for any subset $S \subseteq \{1, \ldots, n-1\}$. 
Note that the fundamental quasisymmetric function can also be indexed by a composition $\alpha \vDash n$, which we denote by $F_\alpha$. 
To convert between the two, if $S = \{s_1 < s_2 < \cdots < s_k\}$, then use $\Comp(S) := (s_1, s_2-s_1, s_3-s_2, \ldots, s_k - s_{k-1}, n - s_k)$ for $\alpha$. 
For the other direction, if $\alpha = (\alpha_1, \alpha_2, \ldots, \alpha_\ell)$, then use $ \Set({\alpha}) := (\alpha_1, \alpha_1 + \alpha_2, \ldots, \alpha_1 + \cdots + \alpha_{\ell-1})$ for $S$ (see for example \cite[Chapter 7.19]{StanleyEC2}).

We will also need the following definitions, which are motivated by related definitions of Iraci, Nadeau, and Vanden Wyngaerd on segmented permutations. 
We will recall their definitions in Section~\ref{sec:background-smirnov} and see the relationship between their definitions and the present ones in Proposition~\ref{prop:Split_on_basis}. 
For any $b \in B_n^{(1,2)}$, we may write
\[ b = \pm \prod_{i=1}^n x_i^{\alpha_i} \theta_i^{\beta_i} \xi_i^{\gamma_i},\]
for some exponents $\alpha_i \in \Z_{\geq 0}$ and $\beta_i,\gamma_i \in \{0,1\}$.
Since this is an ordered product, reordering the factors may change the sign, however, for our purposes the sign does not matter.
Then define $i$ to be an \textit{ascent of $b$} if and only if one of the following occurs:
    \begin{itemize}
        \item $\beta_i < \beta_{i+1}$;
        \item $\beta_i = \beta_{i+1} = 1$ and $\alpha_i \geq \alpha_{i+1} + \gamma_{i+1}$; or
        \item $\beta_i = \beta_{i+1} = 0$ and $\alpha_i < \alpha_{i+1} + \gamma_{i+1}$.
    \end{itemize}
    For $b \in B_{n}^{(1,2)}$, we say that 
    \[\Asc(b) := \{ i \in \{1,\ldots,n-1\} \ | \ i \text{ is an ascent of } b\}.\]
Now we can state a conjectural Frobenius series for $R_n^{(1,2)}$ in terms of the set $B_n^{(1,2)}$.

\begin{conjecture}\label{conj:frob}
    \[\Frob(R_n^{(1,2)};q;u,v) = \sum_{b \in B_n^{(1,2)}} u^{\deg_\theta(b)} v^{\deg_\xi(b)} q^{\deg_x(b)} Q_{\Asc(b),n}.\]
\end{conjecture}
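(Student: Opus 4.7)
The plan is to approach Conjecture~\ref{conj:frob} in three stages, mirroring the classical route from a monomial basis to a fundamental quasisymmetric expansion, as in the Garsia-Stanton proof for $R_n^{(1,0)}$ and the Angarone et al.\ proof \cite{Angarone2024} for $R_n^{(1,1)}$.

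First, I would establish the boundary specializations as consistency checks and guides for the general argument. Setting $v=0$ should recover the conjectural Frobenius series in Equation~\ref{eq:11conjFrob} for $R_n^{(1,1)}$ once $B_n^{(1,2)}$ collapses to the super-Artin basis, while setting $q=0$ should recover the Kim-Rhoades Frobenius series in Equation~\ref{eq:02frob} for $R_n^{(0,2)}$. These reductions verify that $\Asc(b)$ degenerates correctly to the known descent statistics on each slice and isolate where genuinely new combinatorics enter the formula.

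Second, granting Conjecture~\ref{conj:R_n^{(1,2)}}, I would promote $B_n^{(1,2)}$ to an $S_n$-equivariant structure on $R_n^{(1,2)}$. The strategy is to group basis elements by their fermionic content, i.e.\ by an underlying modified Motzkin path $\pi$, and for each such $\pi$ interpret the allowed $x$-exponents (governed by the generalized $\alpha$-sequence) as a $P$-partition on a poset whose linear extensions correspond to permutations with prescribed ascent set. Applying Gessel's fundamental expansion to each piece would produce a term $u^{\deg_\theta(b)} v^{\deg_\xi(b)} q^{\deg_x(b)} Q_{\Asc(b),n}$, and summing over $B_n^{(1,2)}$ would match the right-hand side of the conjecture. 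The final step is to identify this generating function with the $S_n$-graded character of $R_n^{(1,2)}$ by exhibiting, for each multidegree, an explicit $S_n$-module filtration whose associated graded is a direct sum of induced modules of the appropriate shape.

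The main obstacle, beyond Conjecture~\ref{conj:R_n^{(1,2)}} itself, is the construction of this $S_n$-equivariant filtration. Because the monomials in $B_n^{(1,2)}$ are not individually $S_n$-stable, connecting them to an $S_n$-module decomposition requires a \emph{straightening} or Garnir-type algorithm: an effective procedure rewriting arbitrary $S_n$-translates of basis monomials back into $B_n^{(1,2)}$ modulo $\langle \C[\bm{x}_n,\bm{\theta}_n,\bm{\xi}_n]_+^{S_n}\rangle$, while tracking the ascent set and the multidegree through the rewrites. Designing such a rule that simultaneously respects one commuting and two anticommuting sets of variables is strictly more intricate than the $R_n^{(1,1)}$ case handled in \cite{Angarone2024}, and this is where the bulk of the work will lie. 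A feasible intermediate target is to nail down the $m_\mu$-expansion exactly --- which bypasses full character computation by pairing with $h_\mu$ --- using the formulas of Section~\ref{sec:quasisymmetric} as a stepping stone toward the full Frobenius identity, and then to extend from hook characters (Section~\ref{sec:hooks}) to general Schur coefficients via standard character-theoretic arguments.
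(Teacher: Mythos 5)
The statement you are addressing is left as a conjecture in the paper: no proof is given there, and none is currently known. What the paper does establish is supporting evidence --- the cardinality $2^{n-1}n!$ of $B_n^{(1,2)}$ (Theorem~\ref{thm:cardinality}), and, most substantively, that the right-hand side of Conjecture~\ref{conj:frob} coincides with the independently conjectured Frobenius series of Iraci, Nadeau, and Vanden Wyngaerd (Theorem~\ref{thm:equiv_Frobenius}), proven via the explicit weight-preserving bijection $\psi\colon B_n^{(1,2)}\to\SW(1^n)$ together with the identities $\deg_x(b)=\sminv(\psi(b))$ and $\Asc(b)=\Split(\psi(b))$. Your submission is therefore a research plan rather than a proof, and it contains a genuine gap that you flag yourself but do not close.

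Concretely, stage two is where all of the mathematical content lives, and it is not carried out. Even granting Conjecture~\ref{conj:R_n^{(1,2)}}, a monomial basis of a quotient ring does not determine its graded $S_n$-character; the $P$-partition/Gessel step as you describe it only reproduces the right-hand side of the conjecture from itself (the sum $\sum_b u^{\deg_\theta(b)}v^{\deg_\xi(b)}q^{\deg_x(b)}Q_{\Asc(b),n}$ \emph{is} that side by definition), so the entire identification with $\Frob(R_n^{(1,2)};q;u,v)$ rests on the unspecified $S_n$-equivariant filtration and straightening algorithm. That is precisely the hard open problem --- strictly harder than the $R_n^{(1,1)}$ case of \cite{Angarone2024} because of the two anticommuting sets --- and nothing in the proposal constrains what that filtration should look like. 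Two further cautions: your $v=0$ consistency check is weaker than stated, since equation~\ref{eq:11conjFrob} is itself only conjectural, so only the $q=0$ specialization to the proven formula~\ref{eq:02frob} of \cite{KimRhoades2022} is a genuine check; and the intermediate targets you cite from Sections~\ref{sec:quasisymmetric} and~\ref{sec:hooks} compute coefficients of the \emph{conjectural} symmetric function $\SF(n,k,\ell)$, not of the actual module, so they cannot serve as unconditional stepping stones, and there is no ``standard character-theoretic argument'' that extends hook coefficients to general Schur coefficients.
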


This conjecture has been verified for $n \leq 6$. We will see further evidence for this conjecture with Theorem~\ref{thm:equiv_Frobenius}.

\section{Background on segmented Smirnov words}\label{sec:background-smirnov}

The goal of this section is to provide the necessary background on the theory of segmented Smirnov words, recently advanced by Iraci, Nadeau, and Vanden Wyngaerd \cite{IraciNadeauVandenWyngaerd2024}, so that we can discuss the relationship between their work and the present work in Section~\ref{sec:smirnov}.

We recall the following definitions from \cite[Section 1]{IraciNadeauVandenWyngaerd2024}. 
A \textit{Smirnov word} of length $n$ is a word $w \in \Z_{\geq 0}^n$ such that $w_i \neq w_{i+1}$ for all $i \in \{1,\ldots, n-1\}$. 
A \textit{segmented Smirnov word} of shape $\alpha = (\alpha_1,\ldots, \alpha_s) \vDash n$ is a word $w \in \Z_{\geq 0}^n$ such that $w = w^{(1)} \cdots w^{(s)}$ is a concatenation of $s$ Smirnov words, where each subword $w^{(i)}$, called a \textit{block}, is a Smirnov word of length $\alpha_i$.  
We write segmented Smirnov words with a vertical bar between each block. 
For example, $121|13$ is a segmented Smirnov word of shape $(3,2)$.
Let $\SW(n)$ denote the set of segmented Smirnov words of length $n$. 
Let $\mu \vDash_0 n$ denote that $\mu$ is a weak composition of $n$, i.e., $\mu = (\mu_1,\mu_2, \ldots)$, where $\mu_i \in \Z_{\geq 0}$ and $\sum_{i \in \Z_{\geq 0}} \mu_i = n$. 
Given $\mu \vDash_0 n$, let $\SW(\mu)$ denote the set of segmented Smirnov words with content $\mu$, i.e., there are $\mu_1$ many 1's, $\mu_2$ many 2's, etc.

An \textit{ascent}\footnote{It should be clear from the context whether we are referring to the ascents of a (segmented) Smirnov word, or the ascents of an element $b \in B_n^{(1,2)}$, which are different.} of a Smirnov word $w$ is an index $i$ such that $w_i < w_{i+1}$ and a \textit{descent} of a Smirnov word $w$ is an index $i$ such that $w_i > w_{i+1}$.
For segmented Smirnov words, ascents and descents are those that occur strictly within its blocks.
Let $\SW(n,k,\ell)$ denote the set of segmented Smirnov words of length $n$ with exactly $k$ ascents and $\ell$ descents. Since each index $i \in \{1,\ldots,n\}$ is exactly one of: the index of an ascent, the index of a descent, or the last index in a block, we have that the number of blocks is $n-k-\ell$.
Finally, for $\mu \vDash_0 n$, let $\SW(\mu,k,\ell) := \SW(\mu) \cap \SW(n,k,\ell)$.

\begin{definition}[\!\!{\cite[Definition 1.7]{IraciNadeauVandenWyngaerd2024}}]\label{def:sminversion}
    For a segmented Smirnov word $w$, the ordered pair $(i,j)$ with $1 \leq i < j \leq n$ is a \textit{sminversion} (short for Smirnov inversion) if $w_i > w_j$ and one of the following conditions holds:
\begin{enumerate}[(1)]
    \item $w_j$ is the first letter of its block;
    \item $w_{j-1} > w_i$;
    \item $i \neq j-1$, $w_{j-1} = w_i$, and $w_{j-1}$ is the first letter of its block; or
    \item $i \neq j-1$ and $w_{j-2} > w_{j-1} = w_i$.
\end{enumerate}
\end{definition}
Denote $\SF(n,k,\ell) := (\Theta_{e_k}\Theta_{e_\ell}\nabla e_{n-k-\ell} )|_{t=0}$.\footnote{In \cite{IraciNadeauVandenWyngaerd2024}, $\SF(n,k,\ell)$ is first defined by $(\Theta_{e_k}\Theta_{e_\ell}\tilde{H}_{n-k-\ell} )|_{t=0}$, where $\tilde{H}_{n-k-\ell}$ is the modified Macdonald polynomial. These two definitions are equivalent by \cite[Lemma 3.6]{IraciRhoadesRomero}.}
Define the statistic $\sminv(w)$ on a segmented Smirnov word to be the number of sminversions of $w$.
Denote 
\[ \SW_{z;q}(n,k,\ell) := \sum_{w \in \SW(n,k,\ell)} q^{\sminv(w)}z_w.\]
Now we can state one of the main results of \cite{IraciNadeauVandenWyngaerd2024}.

\begin{theorem}[\!\!{\cite[Theorem 3.1]{IraciNadeauVandenWyngaerd2024}}]\label{thm:INVW-3.1}
    For any $ 0 \leq k+\ell < n$, 
    \[\SF(n,k,\ell) = \SW_{z;q}(n,k,\ell).\]
\end{theorem}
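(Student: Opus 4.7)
The plan is to establish the identity by induction on $n$, showing both sides satisfy matching recursions with matching base cases. On the combinatorial side, I would derive a recursion for $\SW_{z;q}(n,k,\ell)$ by examining what role the final letter $w_n$ plays relative to $w_{n-1}$: it either (i) extends the final block as an ascent, decreasing $k$, (ii) extends the final block as a descent, decreasing $\ell$, or (iii) begins a new block, increasing the block count. In each case the prefix $w_1 \cdots w_{n-1}$ is itself a segmented Smirnov word of length $n-1$ with appropriate parameters, and one must carefully track how inserting $w_n$ creates new sminversions with earlier letters according to the four cases of Definition~\ref{def:sminversion}. The result should be a three-term recursion with plethystic coefficients in the new variable $z_{w_n}$.

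On the algebraic side, I would unfold the definition $\SF(n,k,\ell) = (\Theta_{e_k} \Theta_{e_\ell} \nabla e_{n-k-\ell})|_{t=0}$ using the eigenoperator property $\Theta_f \tilde{H}_\mu[X;q,t] = f[B_\mu(q,t) - 1]\tilde{H}_\mu[X;q,t]$. Expanding $\nabla e_{n-k-\ell}$ in the modified Macdonald basis gives a sum over $\mu \vdash n-k-\ell$, and the two Theta operators scale each term by $e_k[B_\mu - 1]\, e_\ell[B_\mu - 1]$. Specialization at $t=0$ substantially simplifies both the Macdonald polynomials (which become $q$-Hall--Littlewood-like) and the plethystic weights (since $B_\mu(q,0)$ depends only on $\mu_1$). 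I would then manipulate these plethystic expressions, likely via a substitution of the form $X \mapsto X + z$, to extract a recursion for $\SF(n,k,\ell)$ matching the combinatorial one derived above.

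The main obstacle will be aligning these two recursions, especially controlling the plethystic side precisely enough to recover the three-term combinatorial form. A secondary difficulty is handling the subtle conditions in Definition~\ref{def:sminversion} — particularly cases (3) and (4), which depend on the last two letters of the previous block rather than just the last one, and therefore require a richer inductive hypothesis tracking not only the generating function but also information about the tail of each segmented Smirnov word. Depending on how cleanly this works out, an alternative strategy would be to first fix the content $\mu \vDash_0 n$ and prove the $m_\mu$-coefficient identity separately for each $\mu$ (yielding a purely numerical claim about $|\SW(\mu,k,\ell)|$ weighted by $q^{\sminv}$), then lift to the symmetric-function level by comparing both sides in the monomial basis. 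Either way, verifying the base cases $n=1$, where $\SF(1,0,0)=\nabla e_1|_{t=0} = h_1$ agrees with the sum over single-letter blocks, and $k+\ell = n-1$, where the block structure is maximally constrained, completes the induction.
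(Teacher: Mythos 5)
The statement you are asked to prove is cited directly from Iraci, Nadeau, and Vanden Wyngaerd (2024) as \cite[Theorem~3.1]{IraciNadeauVandenWyngaerd2024} and is \emph{not} re-proved in this paper; there is no ``paper's own proof'' to compare your sketch against. Judged on its own terms, your proposal contains one concrete error and one structural difficulty that you should address.

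The concrete error: you invoke ``the eigenoperator property $\Theta_f \tilde{H}_\mu[X;q,t] = f[B_\mu(q,t)-1]\tilde{H}_\mu[X;q,t]$,'' but that identity defines the operator $\Delta'_f$, not $\Theta_f$. The Theta operators introduced by D'Adderio, Iraci, and Vanden Wyngaerd are \emph{not} Macdonald eigenoperators; they are defined by conjugating plethystic multiplication by $f[X/M]$ by the $\mathbf{\Pi}$ operator, and they do not act diagonally on $\{\tilde{H}_\mu\}$. The relation quoted in the paper's introduction, $\Delta'_{e_{n-k-1}} e_n = \Theta_{e_k}\nabla e_{n-k}$, links the two families but also shows they are not the same. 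Your plan to expand $\nabla e_{n-k-\ell}$ in the modified Macdonald basis and apply $\Theta_{e_k}\Theta_{e_\ell}$ eigenvalue-by-eigenvalue therefore computes the wrong object, and the subsequent $t=0$ simplification you describe would not produce $\SF(n,k,\ell)$.

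The structural difficulty: your ``three-term recursion by peeling off $w_n$'' is awkward at the symmetric-function level. Since $\SW_{z;q}(n,k,\ell)$ is a symmetric function in infinitely many variables $z$, the last letter $w_n$ can be any positive integer, so stripping it off does not yield a finite recursion in $(n,k,\ell)$ but rather an infinite sum over possible last letters; and the sminversion count depends on how $w_n$ compares to \emph{all} earlier letters (not just $w_{n-1}$), by Definition~\ref{def:sminversion}. Notice that Proposition~\ref{prop:recurrence}, the recursion that actually appears in the paper, is stated only for the content-$1^n$ specialization $\SW_q(1^n,k,\ell)$, i.e.\ the Hilbert series, precisely because that specialization collapses these content-dependencies. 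Your closing alternative — fixing the content $\mu$ and proving the $m_\mu$-coefficient identity, then assembling — is the more realistic route, but as written it relocates rather than resolves the problem: you would still need a handle on $\langle \SF(n,k,\ell), h_\mu\rangle$ at general $\mu$, which is exactly where the substance of the Iraci--Nadeau--Vanden Wyngaerd argument lies and which your sketch does not supply.
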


By summing over all possible $k,\ell$, we get their conjectural Frobenius series for $R_n^{(1,2)}$:
\begin{equation}\label{eq:INVW_conj_Frob}
    \sum_{k + \ell <  n} u^k v^\ell (\Theta_{e_k}\Theta_{e_\ell}\nabla e_{n-k-\ell} )|_{t=0} = \sum_{k + \ell <  n} u^k v^\ell \sum_{w \in \SW(n,k,\ell)} q^{\sminv(w)}z_w.
\end{equation} 

By taking the inner product of this conjectural Frobenius series with $h_1^n$, Iraci, Nadeau, and Vanden Wyngaerd are able to derive the following recursive formula for the conjectural Hilbert series of $R_n^{(1,2)}$. 
When $\mu = (1^n)$, we call the corresponding segmented Smirnov words \textit{segmented permutations}, since they have each of $\{1,\ldots,n\}$ appearing exactly once.
They also note that when we restrict to the case of segmented permutations, only conditions $(1)$ and $(2)$ in the definition of sminversion (Definition~\ref{def:sminversion}) will apply. 
Let \[\SW_q(\mu,k,\ell) := \sum_{w \in \SW(\mu,k,\ell)} q^{\sminv(w)}.\]

\begin{proposition}[\!\!{\cite[Proposition 3.2]{IraciNadeauVandenWyngaerd2024}}]\label{prop:recurrence}
    For any $k+\ell < n$, the polynomials $\SW_q(1^n,k,\ell) \in \Z[q]$ satisfy the recursion
    \begin{align*}
        \SW_q(1^n,k,\ell) &= [n-k-\ell]_q \big( \SW_q(1^{n-1},k,\ell) + \SW_q(1^{n-1},k,\ell-1)\\
        &\quad+ \SW_q(1^{n-1},k-1,\ell) + \SW_q(1^{n-1},k-1,\ell-1)\big),
    \end{align*} 
    with initial conditions $\SW_q(\varnothing,k,\ell) = \delta_{k,0}\delta_{\ell,0}$.
    Furthermore, $\langle \SF(n,k,\ell), h_1^n \rangle = \SW_q(1^n,k,\ell)$. 
\end{proposition}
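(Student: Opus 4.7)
The plan is to establish the ``furthermore'' clause directly from Theorem~\ref{thm:INVW-3.1} and to derive the recursion by constructing an explicit insertion-deletion bijection on segmented permutations. For the inner-product identity, applying $\langle -, h_1^n \rangle$ to both sides of $\SF(n,k,\ell) = \SW_{z;q}(n,k,\ell)$ extracts the coefficient of $z_1 z_2 \cdots z_n$, which is precisely $\sum_{w \in \SW(1^n,k,\ell)} q^{\sminv(w)} = \SW_q(1^n,k,\ell)$, since segmented Smirnov words of content $1^n$ are exactly segmented permutations.

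For the recursion, I would construct a bijection
\[
\phi \colon \SW(1^n,k,\ell) \;\longrightarrow\; \bigsqcup_{(a,b) \in \{0,1\}^2}\; \SW(1^{n-1},k-a,\ell-b) \times \{0,1,\ldots,m-1\},
\]
where $m = n - k - \ell$. Given $w$ with $n$ occupying position $j$ of its $i$-th block $B_i$, set $\phi(w) = (w', m-i)$, where $(a,b)$ and $w'$ are determined by the local structure of $n$ in $B_i$: (i) if $n$ is the sole letter of $B_i$, then $(a,b) = (0,0)$ and $w'$ is obtained by deleting $n$ together with its block; (ii) if $n$ is the last letter of $B_i$ with $|B_i| > 1$, then $(a,b) = (1,0)$ and $w'$ is obtained by deleting $n$; (iii) if $n$ is the first letter of $B_i$ with $|B_i| > 1$, then $(a,b) = (0,1)$ and $w'$ is obtained by deleting $n$; (iv) if $n$ lies strictly between two letters of $B_i$, then $(a,b) = (1,1)$ and $w'$ is obtained by deleting $n$ and inserting a new bar at the vacated position, splitting $B_i$ in two. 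A short local check confirms that $w' \in \SW(1^{n-1}, k-a, \ell-b)$ in each case: for instance in (iv), both the ascent at position $j-1$ (since $w_{j-1}<n$) and the descent at position $j$ (since $n > w_{j+1}$) are destroyed by the deletion, and the inserted bar prevents any new event at the split. The inverse is equally explicit, with the $(1,1)$-case corresponding to inserting $n$ between the $i$-th and $(i+1)$-th blocks of $w'$ and erasing the bar between them.

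The crux is the identity $\sminv(w) = \sminv(w') + (m-i)$. Since $n$ is the maximum letter, a sminversion of the form $(j, j')$ with $w_j = n$ can only come from condition~(1) of Definition~\ref{def:sminversion}, as condition~(2) would require $w_{j-1} > n$; such pairs are in bijection with the first-of-block positions of $w$ strictly after $B_i$, of which there are exactly $m - i$. For sminversions not involving $n$, I would check case by case that conditions~(1) and~(2) are preserved under the positional shift between $w$ and $w'$. The delicate point arises in case~(iv) at the split: a pair $(p, j+1)$ in $w$ with $w_p > w_{j+1}$ automatically satisfies condition~(2) via $w_j = n > w_p$, while the corresponding pair $(p, j)$ in $w'$ automatically satisfies condition~(1) via the newly inserted bar, so the sminversion indicators agree and the counts match.

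Granting this bijection, the recursion follows by collecting $q$-weights:
\[
\SW_q(1^n,k,\ell) \;=\; \sum_{(a,b)} \sum_{w' \in \SW(1^{n-1},k-a,\ell-b)} q^{\sminv(w')}\sum_{i=1}^{m} q^{m-i} \;=\; [m]_q \sum_{(a,b) \in \{0,1\}^2} \SW_q(1^{n-1}, k-a, \ell-b),
\]
and the initial condition $\SW_q(\varnothing,k,\ell) = \delta_{k,0}\delta_{\ell,0}$ is immediate from the empty segmented permutation. The main obstacle is the $\sminv$-preservation verification; I expect case~(iv) to be the most subtle, since the block decompositions of $w$ and $w'$ genuinely differ at the split point and the condition satisfied by a given pair can change type (from condition~(2) in $w$ to condition~(1) in $w'$) across the split.
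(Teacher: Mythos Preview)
The paper does not itself prove Proposition~\ref{prop:recurrence}; it is quoted verbatim from \cite[Proposition~3.2]{IraciNadeauVandenWyngaerd2024}. Your insertion--deletion argument on the largest letter is exactly the approach used in that source (and the key sminversion fact you isolate---that inserting the current maximum creates sminversions precisely with the block-initial letters to its right---is the same observation the present paper later invokes in the proof of Theorem~\ref{thm:bijection}, with a citation to \cite[proof of Proposition~3.2]{DadderioIraciVandenWyngaerd2021}).

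Your argument is correct. One small remark on completeness: the ``type change'' phenomenon you flag in case~(iv), where a pair $(p,j+1)$ is a sminversion in $w$ via condition~(2) (because $w_j=n$) but the corresponding pair $(p,j)$ in $w'$ is a sminversion via condition~(1) (because of the new block boundary), occurs identically in case~(iii). There, $n$ is first in its block, so $j+1$ is not block-initial in $w$ but $\tau(j+1)=j$ becomes block-initial in $w'$ after the deletion; the verification is word-for-word the same as the one you wrote for case~(iv). In cases~(i) and~(ii) the position $j+1$ is already block-initial in $w$, so no type change occurs there and the check is routine. With that addition your case analysis is complete.
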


To obtain a conjectural Hilbert series of $R_n^{(1,2)}$, take the sum
\begin{equation}\label{eq:INVW_conj_Hilb}
\sum_{k+\ell <n}\langle \SF(n,k,\ell), h_1^n \rangle u^k v^\ell = \sum_{k+\ell <n}\SW_q(1^n,k,\ell) u^k v^\ell.
\end{equation}

We recall some additional definitions from \cite[Section 5]{IraciNadeauVandenWyngaerd2024}. Given a segmented Smirnov word $w$, for each index $i \in \{1,\ldots,n\}$, we say:
\begin{itemize}
    \item $i$ is \textit{thick} if $i$ is initial (in a block) or $w_{i-1} > w_i$;
    \item $i$ is \textit{thin} if $i$ is not initial and $w_{i-1} < w_i$.
\end{itemize}
Note that $i$ is thick is equivalent to $i$ is initial or $i$ is the end of a descent. 
Also $i$ is thin is equivalent to $i$ is not initial and $i$ is the end of an ascent.
Let $\sigma$ be a segmented permutation of length $n$, $\sigma_i <n$, and let $j$ be defined by $\sigma_j = \sigma_i + 1$. If any of the following hold, we say that $\sigma_i$ is \textit{splitting} for $\sigma$:
\begin{enumerate}[(a)]
    \item $i$ is thick and $j$ is thin;
    \item $i$ and $j$ are both thin and $i < j$;
    \item $i$ and $j$ are both thick and $j < i$.
\end{enumerate}
Define $\Split(\sigma) = \{m \in \{1,\ldots,n-1\} \ | \ m \text{ is splitting for } \sigma\}$. 
Then $\Split$ can be used to give the following quasisymmetric expansion.

\begin{proposition}[\!\!{\cite[Proposition 5.3]{IraciNadeauVandenWyngaerd2024}}]\label{prop:INVW-quasisymmetric_expansion}
    \[\SW_{z;q}(n,k,\ell) = \sum_{\sigma \in \SW(1^n, k,\ell)} q^{\sminv(\sigma)} Q_{\Split(\sigma),n}.\]
\end{proposition}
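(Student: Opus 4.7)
My plan is to prove this by a Gessel-style standardization argument, grouping the segmented Smirnov words on the left-hand side according to their standardizations. First I would define a standardization map $\mathrm{std}\colon \SW(n,k,\ell) \to \SW(1^n,k,\ell)$ sending a segmented Smirnov word $w$ to the segmented permutation of the same shape obtained by relabeling $w$'s entries with $\{1,\ldots,n\}$, with a tiebreaking rule for repeated letters chosen to be compatible with both the Smirnov condition and the thick/thin classification. I would then verify two invariants of this map: (i) the block shape (hence the ascent/descent counts $(k,\ell)$) is preserved, and (ii) $\sminv(w) = \sminv(\mathrm{std}(w))$. Property (ii) is checked by inspecting each of the four conditions of Definition~\ref{def:sminversion}: the strict-inequality conditions $w_i > w_j$ and $w_{j-1} > w_i$ are preserved by any standardization, while the equality condition $w_{j-1} = w_i$ appearing in cases (3)--(4) translates after standardization into a specific adjacency/block-structure relation, which can be seen to match a corresponding case in the standardized word.

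With these invariants, the left-hand side decomposes as
\[
\SW_{z;q}(n,k,\ell) \;=\; \sum_{\sigma \in \SW(1^n,k,\ell)} q^{\sminv(\sigma)} \sum_{w \,\colon\, \mathrm{std}(w)=\sigma} z_w,
\]
so the proposition reduces to the inner identity
\[
\sum_{w \,\colon\, \mathrm{std}(w)=\sigma} z_w \;=\; Q_{\Split(\sigma),\,n}
\]
for each segmented permutation $\sigma$. I would prove this by the natural bijection $w \mapsto (a_1 \leq a_2 \leq \cdots \leq a_n)$ where $a_m := w_{\sigma^{-1}(m)}$, so that $w$ is recovered from $\sigma$ and this weakly increasing sequence. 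Using the definition of $Q_{S,n}$, it then suffices to show that the set of indices $m \in \{1,\ldots,n-1\}$ at which strict inequality $a_m < a_{m+1}$ is forced (rather than equality being permitted) coincides exactly with $\Split(\sigma)$.

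The main obstacle is this final matching step. Strict inequality $a_m < a_{m+1}$ is forced for two distinct reasons: \emph{tiebreaking}, which forbids $a_m = a_{m+1}$ whenever equality would cause $\mathrm{std}$ to return a permutation other than $\sigma$; and the \emph{Smirnov constraint}, which forbids $a_m = a_{m+1}$ when $i := \sigma^{-1}(m)$ and $j := \sigma^{-1}(m+1)$ are adjacent positions lying in the same block. The thick/thin dichotomy and the three cases (a), (b), (c) in the definition of $\Split$ are engineered to capture precisely this combined condition. The verification proceeds by a case analysis indexed by the thick/thin status of $i$ and $j$ together with the sign of $j-i$; in each of the resulting configurations one checks that membership of $m$ in $\Split(\sigma)$ agrees with strict inequality being forced at the pair $(a_m,a_{m+1})$. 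Once this equivalence is established, the inner identity follows and summing over $\sigma$ yields the proposition.
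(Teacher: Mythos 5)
This statement is imported: the paper cites it as Proposition~5.3 of Iraci--Nadeau--Vanden Wyngaerd and does not reprove it, so there is no in-paper argument to compare your proposal against. Evaluating it on its own terms: your Gessel-style standardization outline is the natural route and is likely close in spirit to the original proof, but as written it has three genuine gaps, each of which is where the real work lies.

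First, the tiebreaking rule for $\mathrm{std}$ is left as ``chosen to be compatible with the thick/thin classification.'' This is not a minor omission --- it is the entire content. The three clauses (a), (b), (c) in the definition of $\Split$ encode a very specific, asymmetric tiebreak (roughly: among equal letters, thin positions standardize before thick positions, and the left-to-right vs.\ right-to-left convention differs between the thin and thick groups). Without writing down the rule one cannot even check that $\mathrm{std}$ is well-defined, let alone that its fibers over $\sigma$ are indexed by weakly increasing sequences with strict jumps exactly at $\Split(\sigma)$. Second, your invariance (ii), $\sminv(w) = \sminv(\mathrm{std}(w))$, is asserted but not checked; the equality cases (3)--(4) of Definition~\ref{def:sminversion} are exactly the ones where the tiebreak can flip the outcome, and your one-sentence gesture at them is not a verification. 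Third, and most substantively, you describe the Smirnov constraint as forbidding $a_m = a_{m+1}$ ``when $i = \sigma^{-1}(m)$ and $j = \sigma^{-1}(m+1)$ are adjacent positions.'' But the Smirnov condition constrains adjacent \emph{positions} of $w$, i.e.\ it forces $a_{\sigma_i} \neq a_{\sigma_{i+1}}$ for same-block neighbours $i, i+1$ even when $|\sigma_i - \sigma_{i+1}| > 1$. Since $a$ is weakly increasing, such a constraint is a disjunction (``at least one of several consecutive inequalities is strict''), not a single forced step, and it is a real lemma --- not a routine case check --- that every such disjunction is already implied by the strict inequalities at $\Split(\sigma)$. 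Your proposal does not acknowledge this issue. These are the steps that would need to be filled in before the argument can be called a proof.
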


\section{The proposed basis and segmented permutations}\label{sec:smirnov}

In this section, we establish a bijection between our proposed basis $B_n^{(1,2)}$ and the set of segmented permutations $\SW(1^n)$ which is $q,u,v$-weight preserving.
This implies the equivalence of two conjectural Hilbert series for $R_n^{(1,2)}$ (equations~\eqref{eq:comb_Hilbert} and~\eqref{eq:INVW_conj_Hilb}).

Define a map $\psi: B_n^{(1,2)} \longrightarrow \SW(1^n)$ as follows. For any $b \in B_n^{(1,2)}$, we can write it as
\[ b = \pm \prod_{i=1}^n x_i^{\alpha_i} \theta_i^{\beta_i} \xi_i^{\gamma_i},\]
for some $\alpha_i \in \Z_{\geq 0}$ and $\beta_i,\gamma_i \in \{0,1\}$. 
Each $b \in B_n^{(1,2)}$ is associated with a modified Motzkin path $\pi$.
Each $\pi$ starts with an up-step: this implies that $\alpha_1 = \beta_1 = \gamma_1 = 0$, so we start by writing the corresponding segmented permutation as $1$. 
Then, as $i$ ranges from $2$ up through $n$, do exactly one of the following for each $i$:
\begin{enumerate}[(a)]
    \item if $\beta_i = \gamma_i = 0$ (this corresponds to an up-step): insert ``$|i$'' or ``$i|$'' in such a way as to create a new block consisting of only $i$ at position $\alpha_i + 1$ from the rightmost block in the permutation (indexing starting at 1);
    \item if $\beta_i = 1$ and $\gamma_i = 0$ (this corresponds to a horizontal step with decoration $\theta_i$): insert $i$ as the last element of an existing block, at position $\alpha_i + 1$ from the rightmost block in the permutation;
    \item if $\beta_i = 0$ and $\gamma_i = 1$ (this corresponds to a horizontal step with decoration $\xi_i$): insert $i$ as the first element of an existing block, at position $\alpha_i + 1$ from the rightmost block in the permutation;
    \item if $\beta_i = \gamma_i = 1$ (this corresponds to a down-step with decoration $\theta_i\xi_i$): insert $i$ to replace a ``$|$'' and thus merge two adjacent blocks into one block, which is now at position $\alpha_i + 1$ from the rightmost block in the permutation.
\end{enumerate}
Upon completion of this process, observe that the output is some segmented permutation $\sigma$ in $\SW(1^n)$. Also, note that the height of the path is equal to the number of blocks in the corresponding segmented permutation.

\begin{example}
    Consider $b = x_3\theta_3\theta_4\xi_4\xi_5 \in B_5^{(1,2)}$. We rewrite this as $b = \pm \prod_{i=1}^n x_i^{\alpha_i} \theta_i^{\beta_i} \xi_i^{\gamma_i}$, with $(\alpha_1,\beta_1,\gamma_1) = (0,0,0)$, $(\alpha_2,\beta_2,\gamma_2) = (0,0,0)$, $(\alpha_3,\beta_3,\gamma_3) = (1,1,0)$, $(\alpha_4,\beta_4,\gamma_4) = (0,1,1)$, $(\alpha_5,\beta_5,\gamma_5) = (0,0,1)$. We start building the segmented permutation by writing 1. Then, for $i=2$, we are in case (a) and insert $|2$ to get $1|2$ so that the new block is in position 1 from the right. Next, for $i=3$, we are in case (b) and insert $3$ as the last element in the existing block in position 2 from the right, yielding $13|2$. Next, for $i=4$, we are in case (d) and insert $4$ to replace the ``$|$'' to merge the two blocks to create just one block in position 1 from the right, yielding $1342$. Finally, for $i=5$, we are in case (c) and insert $5$ as the first element in the existing block in position 1 from the right, yielding $51342$.
\end{example}

For more examples, see the tables in Appendix~\ref{app:A}.

\begin{theorem}\label{thm:bijection}
    The map $\psi : B_n^{(1,2)} \longrightarrow \SW(1^n)$ is a $q,u,v$-weight preserving bijection.
\end{theorem}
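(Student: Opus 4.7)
The plan is to first verify well-definedness, then construct an explicit inverse, and finally establish each of the three weight-preservations ($u$, $v$, $q$) by induction on $n$.

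For well-definedness, I would prove by induction on $i$ that the height $h_i$ of the underlying Motzkin path after step $i$ equals the number of blocks in the partial segmented permutation $\sigma^{(i)}$ produced by the first $i$ insertions. This holds because case (a) adds a new block and raises the height by $1$, cases (b) and (c) preserve both quantities, and case (d) merges two blocks and lowers the height by $1$. Combined with $\alpha_i(T,S) = h_i - 1$, the constraint $0 \leq \alpha_i \leq \alpha_i(T,S)$ exactly enumerates the $h_i$ valid insertion positions at step $i$, so the construction always produces a valid segmented permutation. To invert $\psi$, given $\sigma \in \SW(1^n)$ I would remove the values $n, n-1, \ldots, 2$ in decreasing order: the case for each $i$ is read off from the local configuration around $i$ (singleton block, last element of a longer block, first element of a longer block, or interior), and $\alpha_i$ is recovered as the number of blocks strictly to the right of the block containing $i$ at that stage.

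For $u,v$-weight preservation, I would induct on $n$ and track the per-step increments to the ascent and descent counts. Since at step $i$ the value $i$ is larger than every previously inserted value, each case yields a forced increment: case (a) contributes $0$ ascents and $0$ descents; case (b) contributes $+1$ ascent (coming from the preceding element of the block, smaller than $i$); case (c) contributes $+1$ descent; and case (d) contributes $+1$ ascent and $+1$ descent. Crucially, no preexisting intra-block adjacency is ever destroyed by a later insertion: cases (a), (b), (c) insert only at block boundaries, and case (d) inserts only between two previously bar-separated blocks. Summing the increments gives $\#$ascents $= \deg_\theta(b)$ and $\#$descents $= \deg_\xi(b)$.

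For $q$-weight preservation, I would again induct on $n$. Letting $b' \in B_{n-1}^{(1,2)}$ denote the truncation of $b$ obtained by deleting the factors of index $n$ and $\sigma' = \psi(b')$, the inductive step reduces to showing $\sminv(\sigma) - \sminv(\sigma') = \alpha_n$. Any new sminversion $(p,c)$ with $\sigma_p = n$ must satisfy condition (1) of Definition~\ref{def:sminversion}, since condition (2) would require $\sigma_{c-1} > n$, which is impossible; the valid $c$'s are precisely the first elements of the $\alpha_n$ blocks lying to the right of the block containing $n$ in $\sigma$, yielding exactly $\alpha_n$ new sminversions. Moreover, every sminversion of $\sigma'$ persists in $\sigma$: the only potentially affected pair $(a,c)$ is when $\sigma'_c$ was first of its block in $\sigma'$ but now sits immediately right of $n$ (in cases (c) and (d)); then condition (1) is lost, but condition (2) holds trivially because the new left neighbor $\sigma_{c-1} = n$ exceeds every other value. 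The main obstacle I anticipate is this $q$-preservation argument, particularly verifying under case (d) that every existing sminversion is preserved when the two blocks merge; this relies crucially on $n$ being the maximum, so that condition (2) is automatically triggered whenever condition (1) is lost.
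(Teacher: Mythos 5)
Your proof is correct and follows essentially the same blueprint as the paper's argument (recursive inverse for injectivity, case analysis on the four insertion types for $\theta,\xi$-degree, insertion-based counting of new sminversions for $x$-degree). Two small deviations are worth noting. First, the paper proves injectivity via the sketched inverse but then invokes the cardinality count $|B_n^{(1,2)}| = 2^{n-1}n!$ from Theorem~\ref{thm:cardinality} to conclude bijectivity, whereas you make the inverse fully explicit and verify well-definedness (height $=$ number of blocks, $\alpha_i(T,S)=h_i-1$) so the argument is self-contained and doesn't lean on the prior enumeration. Second, and more substantively, the paper handles $q$-preservation in one sentence with a citation to \cite[proof of Proposition 3.2]{DadderioIraciVandenWyngaerd2021}, while you carry out the argument: you correctly observe that for a segmented permutation only conditions (1) and (2) of Definition~\ref{def:sminversion} can fire, that new sminversions involving the maximum $n$ are exactly the $\alpha_n$ pairs with $j$ initial in a block to the right of $n$ (condition (2) being impossible since $w_{j-1} > n$ cannot hold), and that pre-existing sminversions survive because whenever insertion of $n$ destroys ``first in block'' status for some $c$ (cases (c),(d)), its new left neighbor is $n$, so condition (2) takes over. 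This last point — that condition (2) rescues any sminversion whose condition (1) witness is broken — is the genuine content the paper delegates to a reference, and you have filled it in correctly; your stated ``main obstacle'' is in fact handled by exactly the observation you anticipate (that $n$ is the maximum value).
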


\begin{proof}
First, we can check that $\psi$ is an injection by exhibiting a left inverse. 
In other words, we will show that $B_n^{(1,2)}$ is in bijection with a subset of $\SW(1^n)$. 
Consider any $\sigma \in \psi (B_n^{(1,2)}) \subseteq \SW(1^n)$. 
We will algorithmically construct a $b \in B_n^{(1,2)}$ such that $\psi(b) = \sigma$. 
Initialize the value of $b$ at $1$. 
Each iteration of the algorithm will update the values of $\sigma$ and $b$; the values of $b$ and $\sigma$ which satisfy $\psi(b) = \sigma$ will be the initial value of $\sigma$ and the final value of $b$.
Take $i$ to be the highest number present in the segmented word $\sigma$ ($i$ will first be $n$ and then decrease by $1$ for each iteration of the algorithm).
We determine how $i$ must have been inserted when $\sigma$ was constructed from an element of $B_n^{(1,2)}$:
\begin{enumerate}[(a)]
    \item if $i$ occurs alone in a block, then $\beta_i = \gamma_i = 0$, and $\alpha_i$ is determined by the block which consists of $i$ being $\alpha_i + 1$ from the rightmost block in the permutation (indexing starting at 1);
    \item if $i$ occurs as the last element of a block with at least two elements, then $\beta_i = 1$, $\gamma_i = 0$, and $\alpha_i$ is determined by the block which contains $i$ being $\alpha_i + 1$ from the rightmost block in the permutation;
    \item if $i$ occurs as the first element of a block with at least two elements, then $\beta_i = 0$, $\gamma_i = 1$, and $\alpha_i$ is determined by the block which contains $i$ being $\alpha_i + 1$ from the rightmost block in the permutation;
    \item if $i$ occurs within a block with at least one element to the right and to the left, then $\beta_i = \gamma_i = 1$, and $\alpha_i$ is determined by the block which contains $i$ being $\alpha_i + 1$ from the rightmost block in the permutation.
\end{enumerate}
Having found the values of $\alpha_i, \beta_i,\gamma_i$, multiply the current value of $b$ by $x_i^{\alpha_i}\theta_i^{\beta_i}\xi_i^{\gamma_i}$, updating it with the new value. Then update $\sigma$ by undoing the insertion of $i$ according to the same cases:
\begin{enumerate}[(a)]
    \item delete $i$ and if two $|$ become adjacent, delete one of them;
    \item delete $i$;
    \item delete $i$;
    \item replace $i$ with a $|$.
\end{enumerate}
Then the highest value in $\sigma$ is now $i-1$.
Repeat this process until the segmented word $\sigma$ becomes $1$, and then $b$ is fully constructed. 

By Theorem~\ref{thm:cardinality}, the cardinality of $B_n^{(1,2)}$ is $2^{n-1}n!$. 
To enumerate the segmented permutations $\sigma \in \SW(1^n)$, consider that there are $n!$ permutations and $n-1$ binary choices for whether or not there is a vertical bar between any two adjacent letters.
Thus the cardinality of $\SW(1^n)$ is also $2^{n-1}n!$, and $\psi$ is a bijection, and the left inverse just described is an inverse function.

It only remains to check that $\psi$ is weight-preserving. 
We show that the $\theta$-degree and the $\xi$-degree is preserved. Consider the four cases used in the construction of a segmented permutation under $\psi$ from $b \in B_n^{(1,2)}$:
\begin{enumerate}[(a)]
    \item no ascents or descents are created, so the $\theta$-degree and the $\xi$-degree are unchanged;
    \item one ascent is created (which ends with $i$) and no descents are created, so the $\theta$-degree increases by 1 and the $\xi$-degree is unchanged;
    \item no ascents are created and one descent is created (which begins with $i$), so the $\theta$-degree is unchanged and the $\xi$-degree increases by 1;
    \item one ascent is created and one descent is created (the ascent ends with $i$ and the descent begins with $i$, as $i$ is the highest number in the segmented permutation at this point), so the $\theta$-degree increases by 1 and the $\xi$-degree increases by 1.
\end{enumerate}
In each case, the number of ascents remains the same as the $\theta$-degree and the number of descents remains the same as the $\xi$-degree.
Regarding the $x$-degree, when $i$ is inserted in the construction, it creates a sminversion with exactly all initial elements of blocks to its right, which corresponds exactly to the increase in $x$-degree by $\alpha_i$ (cf. \cite[proof of Proposition 3.2]{DadderioIraciVandenWyngaerd2021}). 
\end{proof}

We record an important consequence of this bijection, which follows since $q$-weights are preserved.

\begin{corollary}\label{cor:sminv-x-degree}
    Let $b \in B_n^{(1,2)}$. Then $\deg_x(b) = \sminv(\psi(b))$.
\end{corollary}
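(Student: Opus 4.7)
The plan is to derive the corollary directly from the $q$-weight preservation built into Theorem~\ref{thm:bijection}: since $\psi$ is $q$-weight preserving, the $q$-weight of $b$, which is $q^{\deg_x(b)}$, must equal the $q$-weight of $\psi(b)$, which is $q^{\sminv(\psi(b))}$ by the definition of $\SW_{z;q}(n,k,\ell)$, and equating exponents gives the identity. To make this self-contained, I would show by induction on $n$ that each insertion step in the construction of $\psi(b)$ contributes exactly $\alpha_i$ new sminversions, where $b = \pm \prod_{i=1}^n x_i^{\alpha_i} \theta_i^{\beta_i} \xi_i^{\gamma_i}$, so that summing over $i$ yields $\sminv(\psi(b)) = \sum_i \alpha_i = \deg_x(b)$.

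At the insertion step for $i$, the letter $i$ is the maximum currently placed, so any new sminversion must have $i$ as its larger element and thus the position of $i$ as its first coordinate $i'$. Among the four conditions in Definition~\ref{def:sminversion}, conditions (2)--(4) each require a letter at least as large as $i$ at some other position: (2) demands $w_{j'-1} > i$, which is impossible, and (3)--(4) force $w_{j'-1} = w_{i'} = i$ at a position distinct from $i'$, again impossible since $i$ appears only once. Hence only condition (1) can contribute, so the new sminversions are in bijection with initial letters of blocks strictly to the right of $i$.

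In each of cases (a), (b), (c) of the construction, $i$ is placed into (or starts) the block at position $\alpha_i + 1$ from the right, so there are exactly $\alpha_i$ blocks strictly to the right of $i$, each contributing one initial letter and hence one new sminversion. The main obstacle is case (d), where $i$ merges two blocks and the initial letter $b$ of the former right block ceases to be initial; here I expect to show that $i$ still contributes exactly $\alpha_i$ new sminversions by condition (1) from the $\alpha_i$ blocks strictly to the right of the merged block, and that pre-existing sminversions whose second coordinate was the position of $b$, previously justified by condition (1), survive via condition (2), because the letter immediately to the left of $b$ is now $i$ and exceeds every other letter currently placed. With this bookkeeping complete, summing the $\alpha_i$ contributions over $i \in \{2, \ldots, n\}$ finishes the proof.
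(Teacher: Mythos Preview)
Your opening sentence is the paper's entire proof: the corollary is recorded as an immediate consequence of the $q$-weight preservation in Theorem~\ref{thm:bijection}. The inductive argument you then outline is more detailed than the paper's own justification inside that theorem, which simply asserts that inserting $i$ creates a sminversion with each initial letter of the blocks to its right and defers the remaining bookkeeping to the proof of Proposition~3.2 in \cite{DadderioIraciVandenWyngaerd2021}.

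One small gap in your plan: the phenomenon you flag as ``the main obstacle'' in case~(d)---a formerly initial letter ceasing to be initial---also occurs in case~(c). When $i$ is prepended to an existing block, that block's old first element is displaced and is no longer initial, so pre-existing sminversions whose second coordinate is that element's position, previously justified by condition~(1), must likewise survive via condition~(2). They do, for exactly the reason you give in case~(d): the new predecessor is $i$, which exceeds every earlier letter. Once case~(c) is folded into your bookkeeping paragraph, the argument is complete.
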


As another consequence of this bijection, we are able to conclude that the conjectural Hilbert series of Iraci, Nadeau, and Vanden Wyngaerd is equal to ours.
\begin{corollary}\label{cor:equivalence_Hilbert}
    \[\sum_{k+\ell <n} u^k v^\ell \sum_{w \in \SW(1^n,k,\ell)} q^{\sminv(w)} = 
    \sum_{\pi \in \Pi(n)_{>0}} u^{\deg_\theta(\pi)} v^{\deg_\xi(\pi)} \stair_q(\pi).\]
\end{corollary}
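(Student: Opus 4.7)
The plan is to chain together Proposition~\ref{prop:Hilb(R_n^{(1,2)})}, the bijection $\psi$ of Theorem~\ref{thm:bijection}, and Corollary~\ref{cor:sminv-x-degree}, so that the corollary reduces to a formal regrouping of terms on both sides.

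First, I would rewrite the right-hand side by expanding each factor $[k+1]_q$ of $\stair_q(\pi)$ as a geometric sum. This enumerates the tuples of exponents $\alpha = (\alpha_1,\ldots,\alpha_n)$ with $0 \le \alpha_i \le \alpha_i(T,S)$, where $T,S \subseteq \{2,\ldots,n\}$ are determined by the $\theta$- and $\xi$-decorations of $\pi$. By Definition~\ref{def:B_n^{(1,2)}}, these tuples are in bijection with the elements $b \in B_n^{(1,2)}$ whose associated modified Motzkin path is $\pi$, and clearly $\deg_\theta(b) = \deg_\theta(\pi)$, $\deg_\xi(b) = \deg_\xi(\pi)$, and the combined $q$-weight tracks $\deg_x(b) = \sum_i \alpha_i$. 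This produces the intermediate identity
\[\sum_{\pi \in \Pi(n)_{>0}} u^{\deg_\theta(\pi)} v^{\deg_\xi(\pi)} \stair_q(\pi) = \sum_{b \in B_n^{(1,2)}} u^{\deg_\theta(b)} v^{\deg_\xi(b)} q^{\deg_x(b)}.\]

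Next, I would transport the sum on the right to segmented permutations via $\psi$. By the weight-preservation verified in the proof of Theorem~\ref{thm:bijection}, for $\sigma = \psi(b)$ the statistic $\deg_\theta(b)$ equals the number of ascents of $\sigma$ and $\deg_\xi(b)$ equals the number of descents of $\sigma$; Corollary~\ref{cor:sminv-x-degree} gives $\deg_x(b) = \sminv(\sigma)$. Writing $a(\sigma)$ and $d(\sigma)$ for the ascent and descent counts, I then obtain
\[\sum_{b \in B_n^{(1,2)}} u^{\deg_\theta(b)} v^{\deg_\xi(b)} q^{\deg_x(b)} = \sum_{\sigma \in \SW(1^n)} u^{a(\sigma)} v^{d(\sigma)} q^{\sminv(\sigma)}.\]

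Finally, I would partition $\SW(1^n)$ by the pair $(k,\ell) = (a(\sigma), d(\sigma))$. As recalled in Section~\ref{sec:background-smirnov}, each of the $n$ positions of $\sigma$ is either an ascent, a descent, or the last index of a block, and there is at least one block; hence the number of blocks $n - k - \ell$ is at least $1$, so $k + \ell < n$. Binning yields exactly the left-hand side of the claimed identity. The substantive content has already been dispatched by Theorem~\ref{thm:bijection} and Corollary~\ref{cor:sminv-x-degree}, so the only remaining work here is the routine reindexing of sums; there is no new obstacle.
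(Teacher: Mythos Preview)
Your proposal is correct and takes essentially the same approach as the paper: both derive the corollary as an immediate consequence of the $q,u,v$-weight-preserving bijection $\psi$ of Theorem~\ref{thm:bijection} (together with Corollary~\ref{cor:sminv-x-degree}), with your write-up simply unpacking the routine reindexing of sums that the paper leaves implicit. The paper also remarks that one could alternatively verify the recurrence of Proposition~\ref{prop:recurrence} directly, but its actual argument is the bijective one you give.
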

This result can also be shown directly, by showing that \[\sum_{\pi \in \Pi(n)_{>0}} u^{\deg_\theta(\pi)} v^{\deg_\xi(\pi)} \stair_q(\pi)\]
satisfies the recurrence relation given in Proposition~\ref{prop:recurrence}. 

With the bijection $\psi : B_n^{(1,2)} \longrightarrow \SW(1^n)$ in hand, the following result relates $\Asc(b)$ with $\Split(\sigma)$.

\begin{proposition}\label{prop:Split_on_basis}
For any $b \in B_n^{(1,2)}$,
    \[\Asc(b) = \Split(\psi(b)).\]
\end{proposition}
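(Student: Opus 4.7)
The plan is to verify, for each value $m \in \{1,\ldots,n-1\}$, that the condition defining $m \in \Asc(b)$ matches the condition defining $m \in \Split(\psi(b))$. The argument naturally breaks into a structural lemma about thick/thin positions in $\sigma = \psi(b)$, a positional lemma about the horizontal order of $m$ and $m+1$, and a final matching of the four cases for $(\beta_m,\beta_{m+1}) \in \{0,1\}^2$ against the three splitting clauses.

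First I would show that the value $m$ is thick in $\sigma$ if and only if $\beta_m = 0$. When $m$ is inserted into the segmented permutation, every element already present is smaller than $m$, so its immediate left neighbor at that moment (if any) is smaller than $m$. In cases (a) and (c) ($\beta_m = 0$), $m$ is placed as the first letter of its block, so it has no left neighbor at that moment; in cases (b) and (d) ($\beta_m = 1$), $m$ has a smaller element directly to its left. The key observation is that every subsequent insertion of some $m' > m$ only modifies blocks at their left end, at their right end, or (in case (d)) between the final letter of the left block and the first letter of the right block being merged. In particular no subsequent insertion places a letter strictly between $m$ and its current left neighbor. Hence if $\beta_m = 1$, $m$'s left neighbor in $\sigma$ remains a smaller value and $m$ is thin; while if $\beta_m = 0$, $m$ is either still initial in $\sigma$ or has acquired a later (hence larger) left neighbor, so $m$ is thick.

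Next I would compare the horizontal positions of $m$ and $m+1$ in $\sigma$. Since each insertion rule preserves the left-right order of previously placed values, the order of $m$ and $m+1$ is fixed at the moment $m+1$ is inserted. At that moment $m$ lies in the block at position $\alpha_m+1$ from the right, and $m+1$'s resulting block is at position $\alpha_{m+1}+1$ from the right. Writing $i,j$ for the positions of $m,m+1$ in $\sigma$, a direct inspection of each insertion rule for $m+1$ gives:
\begin{itemize}
    \item Case (a) for $m+1$: $j<i$ iff $\alpha_m < \alpha_{m+1}$;
    \item Case (c) for $m+1$: $j<i$ iff $\alpha_m \leq \alpha_{m+1}$ (equality makes $m+1$ the new first letter of $m$'s block);
    \item Case (b) for $m+1$: $i<j$ iff $\alpha_m \geq \alpha_{m+1}$ (equality makes $m+1$ the new last letter of $m$'s block);
    \item Case (d) for $m+1$: $i<j$ iff $\alpha_m \geq \alpha_{m+1}+1$ (the boundary values $\alpha_m \in \{\alpha_{m+1},\alpha_{m+1}+1\}$ place $m$ respectively to the right of or to the left of $m+1$ in the merged block).
\end{itemize}
Uniformly across all four cases, this reads $i<j \iff \alpha_m \geq \alpha_{m+1}+\gamma_{m+1}$, and hence $j<i \iff \alpha_m < \alpha_{m+1}+\gamma_{m+1}$.

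Finally I would match the four possibilities for $(\beta_m,\beta_{m+1})$ against the clauses of $\Split$. The pair $(0,1)$ ($m$ thick, $m+1$ thin) triggers the first ascent clause and splitting clause (a) unconditionally. The pair $(1,1)$ (both thin) triggers the second ascent clause $\alpha_m \geq \alpha_{m+1}+\gamma_{m+1}$, which by the previous step matches splitting clause (b) requiring $i<j$. The pair $(0,0)$ (both thick) triggers the third ascent clause $\alpha_m < \alpha_{m+1}+\gamma_{m+1}$, which matches splitting clause (c) requiring $j<i$. The remaining pair $(1,0)$ ($m$ thin, $m+1$ thick) triggers neither an ascent clause nor any splitting clause. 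This exhausts all cases and establishes $\Asc(b) = \Split(\psi(b))$. The main obstacle is the positional dictionary in the second step: carefully carrying out the block-position-from-the-right counting uniformly across the four insertion rules, and in particular treating the boundary values $\alpha_m \in \{\alpha_{m+1},\alpha_{m+1}+1\}$ in case (d) separately to determine on which side of $m+1$ the element $m$ lies within the merged block.
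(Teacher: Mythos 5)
Your proof is correct and follows essentially the same structure as the paper's: a thick/thin characterization in terms of $\beta_m$, a positional lemma relating the horizontal order of $m$ and $m+1$ to the $\alpha$-values, and a final case-by-case matching against the splitting clauses. One point worth noting is that your positional lemma is stated more carefully than the corresponding step in the paper. You correctly condition on the insertion rule used for $m+1$ (equivalently on $\beta_{m+1},\gamma_{m+1}$), obtaining the uniform statement $i<j \iff \alpha_m \geq \alpha_{m+1}+\gamma_{m+1}$, which matches the $\gamma_{m+1}$ appearing in the definition of $\Asc(b)$. The paper's proof instead words this step as depending on ``step $m$'' (with decorations $\theta_m,\xi_m$), which appears to be a typographical slip: conditioning on the step at $m$ would yield $\alpha_m \geq \alpha_{m+1}+\gamma_m$, which is not the condition in the definition of $\Asc(b)$, and can already be checked to disagree on small examples such as $b=\xi_2\theta_3$ at $m=1$. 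The paper's final displayed criterion, however, does use $\gamma_{m+1}$ and so agrees with yours; your careful treatment of the four insertion cases for $m+1$, including the two boundary positions in the merge case (d), is the cleaner way to reach it.
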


\begin{proof} We wish to relate $\Asc(b)$ for elements $b \in B_n^{(1,2)}$ with $\Split(\sigma)$ for $\psi(b) = \sigma \in \SW(1^n)$. Both definitions reduce to determining which indices $m$ are ascents of $b$ or splitting for $\sigma$, respectively. For $\sigma$, this depends on whether indices are thick or thin, so we wish to determine the analogous conditions on $b$. Consider each of the four ways to insert $m$ into a segmented permutation in $\SW(1^{m-1})$ to yield one in $\SW(1^{m})$, and how each affects the modified Motzkin path used to construct the proposed basis elements:
\begin{enumerate}[(a)]
    \item (``$|m$'' or ``$m|$'' is inserted to create a new block $\longleftrightarrow$ append up-step): this will be thick since it is the beginning of a block;
    \item ($m$ is inserted as the last element of an existing block $\longleftrightarrow$ append horizontal step with decoration $\theta_m$): this will be thin since it is preceded by a smaller number;
    \item ($m$ is inserted as the first element of an existing block $\longleftrightarrow$ append horizontal step with decoration $\xi_m$): this will be thick since it is the beginning of a block (and note that the next index remains thick since it was previously the first element in a block and is now the end of a descent);
    \item ($m$ is inserted to replace a ``$|$'' $\longleftrightarrow$ append down-step with decoration $\theta_m\xi_m$): this will be thin, as it is the end of an ascent, as the number which precedes it must be smaller (and note that the next index remains thick since it was previously the first element in a block and is now the end of a descent).
\end{enumerate}
From this discussion, we conclude that thin indices $m$ in a segmented permutation $\sigma$ correspond exactly to the weights $\theta_{\sigma_m}$ which appear as factors in $b \in B_n^{(1,2)}$.

Now we desire a specific classification for when $m = \sigma_i$ is splitting. Let $j$ be defined by $\sigma_j = \sigma_i + 1$. We have that:
\begin{itemize}
    \item if $\theta_{m} \not\in b$ ($i$ thick) and $\theta_{m+1} \in b$ ($j$ thin), then $m$ is splitting for $\sigma$;
    \item if $\theta_{m},\theta_{m+1} \in b$ ($i,j$ thin) and $ i < j$, then $m$ is splitting for $\sigma$;
    \item if $\theta_{m},\theta_{m+1} \not\in b$ ($i,j$ thick) and $ i > j$, then $m$ is splitting for $\sigma$;
\end{itemize}
and otherwise, $m$ is not splitting for $\sigma$. 
However, we have not yet described what $i$ and $j$ are without appealing to the segmented permutation. 
But we only need the relative position of $i$ and $j$ for this purpose, which we can deduce directly from $b \in B_n^{(1,2)}$, which we write as
\[ b = \pm \prod_{m=1}^n x_m^{\alpha_m} \theta_m^{\beta_m} \xi_m^{\gamma_m},\]
for some $\alpha_m \in \Z_{\geq 0}$ and $\beta_m,\gamma_m \in \{0,1\}$.

Each $\alpha_m$ indicates that when building up the corresponding segmented permutation, just after $m$ has been inserted, it is in block $\alpha_m + 1$ from the right.
If step $m$ in the path associated to $b$ is:
\begin{enumerate}[(a)]
    \item an up-step: if $\alpha_{m} \geq \alpha_{m+1}$, then $i < j$, else $i > j$;
    \item a horizontal step with decoration $\theta_m$: if $\alpha_{m} \geq \alpha_{m+1}$, then $i < j$, else $i > j$;
    \item a horizontal step with decoration $\xi_m$: if $\alpha_{m} > \alpha_{m+1}$, then $i < j$, else $i > j$;
    \item a down-step with decoration $\theta_m\xi_m$: if $\alpha_{m} > \alpha_{m+1}$, then $i < j$, else $i > j$.
\end{enumerate}
We determined whether the condition $\alpha_{m} \geq \alpha_{m+1}$ or $\alpha_{m} > \alpha_{m+1}$ is used in each case depending on how the insertion affects the relative order of $m$ and $m+1$.

Recall that for $b \in B_{n}^{(1,2)}$, $\Asc(b) = \{ m \in \{1,\ldots,n-1\} \ | \ m \text{ is an ascent of } b\}$.
At this point, we have that $m$ is splitting for $\sigma$ if and only if one of the following occurs:
    \begin{itemize}
        \item if $\theta_{m} \nmid b$ and $\theta_{m+1} \mid b$;
        \item if $\theta_{m},\theta_{m+1} \mid b$ and 
        \begin{itemize}
            \item if $b$ has an up-step or horizontal step with decoration $\theta_m$ in position $m$, we have $\alpha_{m} \geq \alpha_{m+1}$, or
            \item if $b$ has a horizontal step with decoration $\xi_m$ or a down-step with decoration $\theta_m \xi_m$ in position $m$, we have $\alpha_{m} > \alpha_{m+1}$;
        \end{itemize}
        \item if $\theta_{m},\theta_{m+1} \nmid b$ and 
        \begin{itemize}
            \item if $b$ has an up-step or horizontal step with decoration $\theta_m$, we have $\alpha_{m} < \alpha_{m+1}$, or
            \item if $b$ has a horizontal step with decoration $\xi_m$ or a down-step with decoration $\theta_m \xi_m$, we have $\alpha_{m} \leq \alpha_{m+1}$.
        \end{itemize}
    \end{itemize}
Finally, we can convert this to a criteria on only the exponents of $b$. That is, $m$ is splitting for $\sigma$ if and only if one of the following occurs:
    \begin{itemize}
        \item $\beta_m < \beta_{m+1}$;
        \item $\beta_m = \beta_{m+1} = 1$ and $\alpha_m \geq \alpha_{m+1} + \gamma_{m+1}$; or
        \item $\beta_m = \beta_{m+1} = 0$ and $\alpha_m < \alpha_{m+1} + \gamma_{m+1}$,
    \end{itemize}
    which is exactly the definition of when $m$ is an ascent of $b$, as desired.
\end{proof}

Now, we are able to establish the equivalence of our conjectural Frobenius series (Conjecture~\ref{conj:frob}) with that of Iraci, Nadeau, and Vanden Wyngaerd (equation~\eqref{eq:INVW_conj_Frob}).

\begin{theorem}\label{thm:equiv_Frobenius}
    \[\sum_{b \in B_n^{(1,2)}} u^{\deg_\theta(b)} v^{\deg_\xi(b)} q^{\deg_x(b)} Q_{\Asc(b),n} = \sum_{ k+\ell < n} u^k v^\ell \sum_{\sigma \in \SW(1^n, k,\ell)} q^{\sminv(\sigma)} Q_{\Split(\sigma),n}.\]
\end{theorem}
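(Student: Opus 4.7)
The proof is essentially a direct term-by-term matching via the bijection $\psi : B_n^{(1,2)} \longrightarrow \SW(1^n)$ of Theorem~\ref{thm:bijection}, combined with the statistic-preservation results already established in this section. The plan is simply to reindex the left-hand side by setting $\sigma = \psi(b)$ and check that each factor in the summand on the left corresponds exactly to the matching factor on the right.

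More concretely, I would first invoke the bijection to rewrite
\[\sum_{b \in B_n^{(1,2)}} u^{\deg_\theta(b)} v^{\deg_\xi(b)} q^{\deg_x(b)} Q_{\Asc(b),n} = \sum_{\sigma \in \SW(1^n)} u^{\deg_\theta(\psi^{-1}(\sigma))} v^{\deg_\xi(\psi^{-1}(\sigma))} q^{\deg_x(\psi^{-1}(\sigma))} Q_{\Asc(\psi^{-1}(\sigma)),n}.\]
Then I would apply the three weight-preservation facts in turn. From the proof of Theorem~\ref{thm:bijection}, the $\theta$-degree of $b$ equals the number of ascents of $\psi(b)$ and the $\xi$-degree of $b$ equals the number of descents of $\psi(b)$, so if $\sigma = \psi(b) \in \SW(1^n,k,\ell)$ then $u^{\deg_\theta(b)}v^{\deg_\xi(b)} = u^k v^\ell$. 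By Corollary~\ref{cor:sminv-x-degree}, $q^{\deg_x(b)} = q^{\sminv(\sigma)}$. Finally, by Proposition~\ref{prop:Split_on_basis}, $\Asc(b) = \Split(\sigma)$, so $Q_{\Asc(b),n} = Q_{\Split(\sigma),n}$.

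The last step is to regroup the sum over $\SW(1^n)$ according to the pair $(k,\ell)$ counting ascents and descents. Since every segmented permutation of length $n$ has at least one block, its number of blocks $n-k-\ell$ is at least $1$, so $\SW(1^n) = \bigsqcup_{k+\ell < n} \SW(1^n,k,\ell)$. This yields
\[\sum_{\sigma \in \SW(1^n)} u^{k(\sigma)} v^{\ell(\sigma)} q^{\sminv(\sigma)} Q_{\Split(\sigma),n} = \sum_{k+\ell < n} u^k v^\ell \sum_{\sigma \in \SW(1^n,k,\ell)} q^{\sminv(\sigma)} Q_{\Split(\sigma),n},\]
which is exactly the right-hand side.

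There is effectively no obstacle remaining at this stage: all of the genuine combinatorial content has been absorbed into the earlier statements (the bijectivity of $\psi$, the identification of $\deg_x$ with $\sminv$, and the equality $\Asc(b) = \Split(\psi(b))$). The only thing worth double-checking is that $\psi$ is really compatible with the ascent/descent bookkeeping so that $b \in B_n^{(1,2)}$ with $\deg_\theta(b) = k$ and $\deg_\xi(b) = \ell$ does land in $\SW(1^n,k,\ell)$ rather than some other stratum; this follows from the case-by-case analysis (a)--(d) in the proof of Theorem~\ref{thm:bijection}, where each insertion step contributes precisely as many ascents (resp.\ descents) to $\psi(b)$ as it contributes $\theta$-factors (resp.\ $\xi$-factors) to $b$.
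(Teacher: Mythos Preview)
Your proposal is correct and follows essentially the same approach as the paper's own proof, which simply cites Theorem~\ref{thm:bijection}, Corollary~\ref{cor:sminv-x-degree}, and Proposition~\ref{prop:Split_on_basis} in one sentence. Your version is merely a more explicit unpacking of that citation, spelling out the reindexing and the regrouping over $(k,\ell)$ that the paper leaves to the reader.
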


\begin{proof}
This follows from Theorem~\ref{thm:bijection}, establishing that $\psi: B_n^{(1,2)} \longrightarrow \SW(1^n)$ is a bijection, along with Corollary~\ref{cor:sminv-x-degree} and Proposition~\ref{prop:Split_on_basis}, which state, respectively, that if $\psi(b) = \sigma$, then $\deg_x(b) = \sminv(\sigma)$ and $\Asc(b) = \Split(\sigma)$.
\end{proof}

We conclude this section with the discussion of some specializations.
Iraci, Rhoades, and Romero \cite[Theorem 1.3]{IraciRhoadesRomero} showed that
\[ \Frob(R_n^{(0,2)};u,v) = \sum_{k+\ell < n} u^k v^\ell \Theta_{e_k} \Theta_{e_\ell} \nabla e_{n-k-\ell}|_{q=t=0}.\]
As discussed in \cite[Section 6.3]{IraciNadeauVandenWyngaerd2024}, the conjectural Frobenius for $R_n^{(1,2)}$ in equation~\eqref{eq:INVW_conj_Frob} recovers the Frobenius series for $R_n^{(0,2)}$ (equation~\eqref{eq:02frob}) by further specialization of the Theta conjecture at $q=0$. This implies by Theorem~\ref{thm:equiv_Frobenius} that Conjecture~\ref{conj:frob} also specializes to the Frobenius series for $R_n^{(0,2)}$.

The conjecture in equation~\eqref{eq:11conjFrob} can be recovered from Conjecture~\ref{conj:frob}. The Theta conjecture (equation~\eqref{eq:thetaconj}) can be specialized at $v=0$ to Zabrocki's conjecture (equation~\eqref{eq:zabrocki}), which can be further specialized at $t=0$ to equation~\eqref{eq:11conjFrob}. That is,
\begin{align}\label{eq:theta_11}
    &\sum_{k+\ell < n} u^k v^\ell \Theta_{e_k} \Theta_{e_\ell} \nabla e_{n-k-\ell}|_{v=0, t=0}\\ &\quad= \sum_{k=0}^{n-1} \sum_{\lambda \vdash n} \sum_{T \in \SYT(\lambda)} u^k q^{\maj(T) - k\des(T) + \binom{k}{2}}  \qbinom{\des(T)}{k}_q s_\lambda.\nonumber
\end{align}
On the other hand, starting with the Theta conjecture and specializing at $t=0$ gives Conjecture~\ref{conj:frob}; upon further specialization at $v=0$, since the order of specialization commutes, implies that equation~\eqref{eq:theta_11} is also equal to 
\[\sum_{b \in B_n^{(1,2)}} u^{\deg_\theta(b)} v^{\deg_\xi(b)} q^{\deg_x(b)} Q_{\Asc(b),n}|_{v=0} = \sum_{b \in B_n^{(1,1)}} u^{\deg_\theta(b)} q^{\deg_x(b)} Q_{\Asc(b),n}|_{v=0},\]
where $\Asc(b)$ for $b \in B_n^{(1,1)}$ is determined in the same way as for $b \in B_n^{(1,2)}$.

It would be interesting to see if the conjectures in equations \eqref{eq:Bergeron_conj1} or \eqref{eq:Bergeron_conj2}, can be recovered by specializing from either form of the conjectural Frobenius series given in Theorem~\ref{thm:equiv_Frobenius}. It would also be interesting to have direct combinatorial proofs of the two specializations just discussed.

\section{Refining the fundamental quasisymmetric function}\label{sec:quasisymmetric}

In this section, we establish a formula for $\langle \SF(n,k,\ell), h_\mu \rangle = [ m_\mu ] \SF(n,k,\ell)$. 
Recall from Section~\ref{sec:conj_frob} that $\Set(\mu) = \{\mu_1,\mu_1+\mu_2, \ldots, \mu_1 + \cdots + \mu_{\ell(\mu) -1}\}$, where $\ell(\mu)$ is the length of the partition $\mu$.

\begin{theorem}\label{thm:quasisymmetric-refinement-h_mu-basis} Let $\mu \vdash n$. For any fixed $k, \ell$, we have that
    \[ \langle \SF(n,k,\ell), h_\mu \rangle = \sum_{\substack{b \in B_n^{(1,2)},\\
    \deg_\theta(b) = k,\\
    \deg_\xi(b) = \ell,\\
    \Asc(b) \subseteq \Set(\mu)}} q^{\deg_x(b)}.\]
\end{theorem}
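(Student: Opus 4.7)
The plan is to combine the quasisymmetric-function expansion of $\SF(n,k,\ell)$ coming from our proposed basis with a standard coefficient-extraction identity. Combining Theorem~\ref{thm:INVW-3.1} with Theorem~\ref{thm:equiv_Frobenius}, I get
\[\SF(n,k,\ell) = \sum_{\substack{b \in B_n^{(1,2)},\\ \deg_\theta(b) = k,\\ \deg_\xi(b) = \ell}} q^{\deg_x(b)}\, Q_{\Asc(b),n}.\]
Since $\SF(n,k,\ell)$ is a symmetric function, the Hall pairing $\langle \SF(n,k,\ell), h_\mu\rangle$ coincides with the coefficient of $m_\mu$ in $\SF(n,k,\ell)$, which in turn equals the coefficient of the single monomial $z_1^{\mu_1} z_2^{\mu_2} \cdots z_{\ell(\mu)}^{\mu_{\ell(\mu)}}$.

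The key step is then a small, purely combinatorial computation: for any subset $S \subseteq \{1,\ldots,n-1\}$, the coefficient of $z_1^{\mu_1} z_2^{\mu_2} \cdots z_{\ell(\mu)}^{\mu_{\ell(\mu)}}$ in $Q_{S,n}$ equals $[S \subseteq \Set(\mu)]$. To see this, note that there is a unique weakly increasing word in $\Z_{>0}$ of length $n$ with this content, namely $w = (1^{\mu_1} 2^{\mu_2} \cdots \ell(\mu)^{\mu_{\ell(\mu)}})$, and the strict ascents of $w$ occur at exactly the positions in $\Set(\mu)$. Hence $w$ satisfies the strict-increase constraint defining $Q_{S,n}$ if and only if $S \subseteq \Set(\mu)$.

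Substituting this back into the expansion of $\SF(n,k,\ell)$ and restricting to the summands with $\deg_\theta(b)=k$ and $\deg_\xi(b)=\ell$ yields the claimed formula. There is no serious obstacle: the only care needed is that the passage from ``coefficient of $m_\mu$'' to ``coefficient of the specific monomial $z_1^{\mu_1} z_2^{\mu_2} \cdots$'' is legitimate only because $\SF(n,k,\ell)$ is symmetric (the individual summands $Q_{\Asc(b),n}$ are not). Given Theorem~\ref{thm:equiv_Frobenius}, the entire argument is then a short unwinding of definitions.
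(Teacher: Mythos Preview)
Your proof is correct and follows essentially the same approach as the paper: the paper packages your coefficient-extraction step (that the coefficient of $z_1^{\mu_1}\cdots z_{\ell(\mu)}^{\mu_{\ell(\mu)}}$ in $Q_{S,n}$ is $[S\subseteq \Set(\mu)]$) into a short lemma (Lemma~\ref{lem:quasi-h_mu}) and otherwise assembles the same ingredients. One small citation gap: to pass from the right-hand side of Theorem~\ref{thm:equiv_Frobenius} to $\SF(n,k,\ell)$ you need Proposition~\ref{prop:INVW-quasisymmetric_expansion} in addition to Theorem~\ref{thm:INVW-3.1}.
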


We make use of the following lemma, which is a modification of \cite[Lemma 6.14]{Haglund2008}, a similar result on (non-segmented) permutations, and is proven in the same manner.

\begin{lemma}\label{lem:quasi-h_mu}
    Let $\mu \vdash n$.
    If $f$ is a symmetric function, then for any family of constants $c(b)$, and any set $B$ such that elements $b \in B$ have a function $\Asc: B \longrightarrow \{1,\ldots, n-1\}$,
    \begin{equation}\label{eq:quasi} 
    f = \sum_{b \in B} c(b) Q_{\Asc(b), n}\end{equation}
    if and only if
    \begin{equation}\label{eq:h_mu}
    \langle f, h_\mu \rangle = \sum_{\substack{b \in B,\\ \Asc(b) \subseteq \Set(\mu)}} c(b).\end{equation}
\end{lemma}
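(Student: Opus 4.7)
The plan is to reduce both directions to the single pairing identity
\[
\langle Q_{S,n}, h_\mu \rangle = \begin{cases} 1 & \text{if } S \subseteq \Set(\mu), \\ 0 & \text{otherwise,} \end{cases}
\]
valid for $S \subseteq \{1,\ldots,n-1\}$ and $\mu \vdash n$. To establish this, I would use that $\{m_\lambda\}$ and $\{h_\mu\}$ are dual bases for the Hall inner product, so that for any symmetric function $g$ of degree $n$, $\langle g, h_\mu \rangle$ is precisely the coefficient of the monomial $z_1^{\mu_1} z_2^{\mu_2} \cdots z_{\ell(\mu)}^{\mu_{\ell(\mu)}}$ in $g$. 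I would then read this coefficient off directly from the defining sum of $Q_{S,n}$: the unique weakly increasing sequence $(a_1,\ldots,a_n)$ realizing this monomial is $1^{\mu_1} 2^{\mu_2} \cdots \ell(\mu)^{\mu_{\ell(\mu)}}$, whose strict ascents occur at exactly the positions in $\Set(\mu)$. Hence the defining condition $a_i < a_{i+1}$ for every $i \in S$ holds if and only if $S \subseteq \Set(\mu)$.

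For the forward direction, I would take the inner product of (\ref{eq:quasi}) with $h_\mu$, pull the sum out by linearity, and apply the identity above: only those $b \in B$ with $\Asc(b) \subseteq \Set(\mu)$ contribute, each with weight $c(b)$, yielding (\ref{eq:h_mu}).

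For the backward direction, I would let $g := \sum_{b \in B} c(b)\, Q_{\Asc(b),n}$, viewed a priori as a quasisymmetric function. The same term-by-term computation, performed directly on monomial coefficients, shows that the coefficient of $z_1^{\mu_1} \cdots z_{\ell(\mu)}^{\mu_{\ell(\mu)}}$ in $g$ equals $\sum_{b : \Asc(b) \subseteq \Set(\mu)} c(b)$, which by hypothesis equals $\langle f, h_\mu \rangle$, i.e., the coefficient of the same monomial in the symmetric function $f$. Since $\{h_\mu\}_{\mu \vdash n}$ is a basis for symmetric functions of degree $n$, the collection of values $\langle f, h_\mu \rangle$ determines $f$ uniquely among symmetric functions. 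The main step requiring care, and the place where the argument is adapted from \cite[Lemma 6.14]{Haglund2008}, is verifying that asserting (\ref{eq:quasi}) forces $g$ to be symmetric; once this is in hand, agreement on every partition-indexed monomial coefficient upgrades to the equality $f = g$ as quasisymmetric functions.
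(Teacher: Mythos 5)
Your forward direction is correct and coincides with what the paper (via Haglund's Lemma~6.14) intends: extracting the coefficient of $z_1^{\mu_1}\cdots z_{\ell(\mu)}^{\mu_{\ell(\mu)}}$ from each $Q_{\Asc(b),n}$ yields the indicator of $\Asc(b) \subseteq \Set(\mu)$, because the unique weakly increasing word of content $\mu$ has strict-ascent set exactly $\Set(\mu)$. This is also the only direction the paper actually uses: in the proof of Theorem~\ref{thm:quasisymmetric-refinement-h_mu-basis}, $f$ is \emph{defined} to be the right-hand side of~(\ref{eq:quasi}), the forward implication is applied, and symmetry of $f$ is confirmed afterward by identifying it with $\SF(n,k,\ell)$.

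Your backward direction, however, is circular at the decisive step. You correctly observe that one needs $g := \sum_b c(b)\, Q_{\Asc(b),n}$ to be symmetric before agreement on partition-indexed monomial coefficients can be upgraded to equality of quasisymmetric functions, but you then propose to obtain that symmetry by ``asserting~(\ref{eq:quasi})'', which is precisely the conclusion the backward direction is trying to establish. Without an independent hypothesis that $g$ is symmetric the backward implication actually fails: take $n=3$, $B = \{b_0\}$, $c(b_0) = 1$, $\Asc(b_0) = \{1\}$, and $f = e_3$. Then $\langle e_3, h_\mu\rangle$ is $1$ when $\{1\} \subseteq \Set(\mu)$ (i.e.\ $\mu = (1,1,1)$) and $0$ otherwise, so~(\ref{eq:h_mu}) holds for every $\mu \vdash 3$; yet $e_3 = Q_{\{1,2\},3} \neq Q_{\{1\},3}$, since $Q_{\{1\},3}$ contains the monomial $z_1 z_2^2$ while $e_3$ does not. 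The lemma should therefore be read with the additional (and in the paper's applications always satisfied) hypothesis that the right-hand side of~(\ref{eq:quasi}) is symmetric; with that in hand your monomial-coefficient argument for the backward direction closes without issue.
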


\begin{proof}[Proof of Theorem \ref{thm:quasisymmetric-refinement-h_mu-basis}]
    Fix $k$ and $\ell$.
    Let $c(b) = q^{\deg_x(b)}$. Then we can use \[ f = \sum_{\substack{b \in B_n^{(1,2)},\\
    \deg_\theta(b) = k,\\
    \deg_\xi(b) = \ell}} Q_{\Asc(b),n} \] in Lemma~\ref{lem:quasi-h_mu}. 
    It follows that
    \[ \langle f, h_\mu \rangle = \sum_{\substack{b \in B_n^{(1,2)},\\
    \deg_\theta(b) = k,\\
    \deg_\xi(b) = \ell,\\
    \Asc(b) \subseteq \Set(\mu)}} q^{\deg_x(b)}.\]
    Proposition~\ref{prop:INVW-quasisymmetric_expansion} and Theorem \ref{thm:INVW-3.1} imply $\SF(n,k,\ell) = \sum_{\sigma \in \SW(1^n,k,\ell)} q^{\sminv(\sigma)}Q_{\Split(\sigma),n}$.
    We can restrict the bijection $\psi$ from Theorem~\ref{thm:bijection} to one on $\{b \in B_n^{(1,2)} \ | \  \deg_\theta(b) = k \text{ and } \deg_\xi(b) = \ell \}$ and $\SW(1^n,k,\ell)$ since it is $q,u,v$-weight preserving. 
    Corollary~\ref{cor:sminv-x-degree} tells us that $\deg_x(b) = \sminv(\psi(b))$ and Proposition~\ref{prop:Split_on_basis} tells us that $\Asc(b) = \Split(\psi(b))$, so $f = \SF(n,k,\ell)$, and the claim follows.
\end{proof}

In the special case where $\mu = (d+1,1^{n-d-1})$ is a hook shape, we have the following result. Denote by $B_{n}^{(1,2)}|_{(d+1)-\up}$ the subset of $b \in B_{n}^{(1,2)}$ where the corresponding modified Motzkin path begins with $d+1$ up-steps, with no $x$-degree contribution from those steps. Equivalently, if $b = \pm \prod_{m=1}^n x_m^{\alpha_m} \theta_m^{\beta_m} \xi_m^{\gamma_m}$, then $\alpha_m=\beta_m=\gamma_m=0$ for all $m \in \{1,\ldots,d+1\}$.

\begin{corollary}\label{cor:h_hook}
    \[\langle \SF(n,k,\ell), h_{(d+1, 1^{n-d-1})}\rangle = \sum_{\substack{b \in B_{n}^{(1,2)}|_{(d+1)-\up}, \\ \deg_\theta(b) = k,\\ \deg_\xi(b) = \ell.}} q^{\deg_x(b)}. \]
\end{corollary}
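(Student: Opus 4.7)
The plan is to derive Corollary~\ref{cor:h_hook} directly from Theorem~\ref{thm:quasisymmetric-refinement-h_mu-basis} by specializing to the hook partition $\mu = (d+1, 1^{n-d-1})$. First I would compute the partial sums: since $\ell(\mu) = n-d$, the relevant sums are $d+1, d+2, \ldots, n-1$, so $\Set(\mu) = \{d+1, d+2, \ldots, n-1\}$. Under Theorem~\ref{thm:quasisymmetric-refinement-h_mu-basis}, the constraint $\Asc(b) \subseteq \Set(\mu)$ then becomes the statement that no index in $\{1, 2, \ldots, d\}$ is an ascent of $b$, so the remaining task is to show that this absence of early ascents is equivalent to $b \in B_n^{(1,2)}|_{(d+1)\text{-up}}$, namely $\alpha_m = \beta_m = \gamma_m = 0$ for all $m \in \{1, \ldots, d+1\}$.

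The easy direction is immediate from the definition of ascent: if $\alpha_m = \beta_m = \gamma_m = 0$ for $m \leq d+1$, then for every $i \in \{1, \ldots, d\}$ we have $\beta_i = \beta_{i+1} = 0$, $\alpha_i = 0$, and $\alpha_{i+1} + \gamma_{i+1} = 0$, so none of the three clauses defining an ascent of $b$ can be triggered. For the converse, I would proceed by induction on $i \in \{1, \ldots, d+1\}$. The base case $i=1$ follows because every path in $\Pi(n)_{>0}$ starts with an up-step, forcing $\beta_1 = \gamma_1 = 0$, while the generalized $\alpha$-sequence satisfies $\alpha_1(T,S) = 0$, forcing $\alpha_1 = 0$.

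For the inductive step, assume $\alpha_j = \beta_j = \gamma_j = 0$ for all $j \leq i$ with $i \leq d$, and use the hypothesis that $i$ is not an ascent of $b$. Since $\beta_i = 0$, the failure of the first clause $\beta_i < \beta_{i+1}$ forces $\beta_{i+1} = 0$; the second clause is then vacuous; and the failure of the third clause, which with $\beta_i = \beta_{i+1} = 0$ and $\alpha_i = 0$ reads $0 \geq \alpha_{i+1} + \gamma_{i+1}$, forces $\alpha_{i+1} = \gamma_{i+1} = 0$. This advances the induction up through $m = d+1$, completing the argument. I do not anticipate any serious obstacles; the proof is essentially a direct unpacking of the three clauses in the definition of $\Asc(b)$, and the only mild subtlety is that these clauses taken together simultaneously pin down all three of $\beta_{i+1}$, $\alpha_{i+1}$, and $\gamma_{i+1}$ at each step, which is precisely what is required to propagate the property along the prefix.
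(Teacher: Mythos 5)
Your proposal is correct and takes essentially the same approach as the paper: specialize Theorem~\ref{thm:quasisymmetric-refinement-h_mu-basis} to the hook $\mu = (d+1,1^{n-d-1})$, observe $\Set(\mu) = \{d+1,\ldots,n-1\}$, and prove both directions of the equivalence between $\Asc(b) \subseteq \{d+1,\ldots,n-1\}$ and $b \in B_n^{(1,2)}|_{(d+1)\text{-up}}$ by unpacking the three ascent clauses. Your explicit induction is a slight formalization of what the paper phrases as ``repeating this argument,'' but the underlying reasoning is identical.
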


\begin{proof} From Theorem~\ref{thm:quasisymmetric-refinement-h_mu-basis} specialized to hook shapes, we have that
    \[\langle \SF(n,k,\ell), h_{(d+1, 1^{n-d-1})}\rangle = \sum_{\substack{b \in B_{n}^{(1,2)}, \\ \deg_\theta(b) = k,\\ \deg_\xi(b) = \ell,\\ \Asc(b) \subseteq \{d+1,\ldots,n-1\}}} q^{\deg_x(b)}. \]
    It only remains to show that for any $b \in B_n^{(1,2)}$, the condition $b \in B_{n}^{(1,2)}|_{(d+1)-\up}$ is equivalent to $\Asc(b) \subseteq \{d+1,\ldots,n-1\}$. 

    First suppose that $\Asc(b) \subseteq \{d+1,\ldots,n-1\}$, so then $m$ is not an ascent of $b$ for all $m \in \{1,\ldots, d\}$. Every modified Motzkin path must start with an up-step, so $\alpha_1 = \beta_1 = \gamma_1 = 0$. Then for $1$ to not be an ascent, $\beta_2 \neq 1$, so $\beta_2 = 0$. Furthermore, since $\beta_1 = \beta_2 = 0$, then for $1$ to not be an ascent, we must have $\alpha_1 \geq \alpha_2 + \gamma_2$. Since $\alpha_2, \gamma_2$ are both nonnegative, this implies that $\alpha_2 = \gamma_2 = 0$. This means that the second step must be an up-step with no $x$-degree contribution. Repeating this argument proves this direction.

    On the other hand, suppose that $b \in B_{n}^{(1,2)}|_{(d+1)-\up}$. Then it follows that $\alpha_m=\beta_m=\gamma_m=0$ for all $m \in \{1,\ldots,d+1\}$, so $\Asc(b) \subseteq \{d+1,\ldots,n-1\}$.
\end{proof}

\section{Characters for hook shapes}\label{sec:hooks}

In this section, we derive a formula for $\langle \SF(n,k,\ell) , s_\lambda \rangle$, when $\lambda$ is a hook shape $(d+1,1^{n-d-1}) \vdash n$ for $0 \leq d \leq n-1$. 

First, we prove the following proposition, which extends \cite[Proposition 5.7]{IraciNadeauVandenWyngaerd2024} for column shapes (at $d=0$) to hook shapes.

\begin{theorem}\label{thm:schur-coeff-hook}
    \[\langle \SF(n,k,\ell) , s_{(d+1,1^{n-d-1})}\rangle = \sum_{\substack{\sigma \in \SW(1^n,k,\ell), \\ \Split(\sigma) = \{d+1,\ldots, n-1\}}} q^{\sminv(\sigma)}= \sum_{\substack{b \in B_{n}^{(1,2)}, \\ \deg_\theta(b) = k,\\ \deg_\xi(b) = \ell,\\ \Asc(b) = \{d+1,\ldots,n-1\}}} q^{\deg_x(b)}.\]
\end{theorem}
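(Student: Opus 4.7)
My plan is as follows. The theorem asserts two equalities. The second one---between the sum over segmented permutations in $\SW(1^n,k,\ell)$ and the sum over basis elements in $B_n^{(1,2)}$---follows immediately from the bijection $\psi : B_n^{(1,2)} \to \SW(1^n)$ of Theorem~\ref{thm:bijection}: it preserves $\deg_\theta$ and $\deg_\xi$ (so restricts onto $\SW(1^n, k, \ell)$), carries $\deg_x$ to $\sminv$ by Corollary~\ref{cor:sminv-x-degree}, and carries $\Asc$ to $\Split$ by Proposition~\ref{prop:Split_on_basis}. The real content is therefore the first equality.

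For that, the plan is to isolate a single fundamental quasisymmetric function that picks out the desired Schur coefficient. Combining Theorem~\ref{thm:INVW-3.1} and Proposition~\ref{prop:INVW-quasisymmetric_expansion} gives
\[ \SF(n,k,\ell) \;=\; \sum_{\sigma \in \SW(1^n,k,\ell)} q^{\sminv(\sigma)}\, Q_{\Split(\sigma),n}. \]
Using the standard identity $s_\mu = \sum_{T \in \SYT(\mu)} Q_{\Des(T),n}$ and the linear independence of $\{Q_{S,n}\}_{S \subseteq \{1,\ldots,n-1\}}$ in degree $n$, the coefficient of a fixed $Q_{S,n}$ in any Schur expansion $\sum_\mu c_\mu s_\mu$ equals $\sum_\mu c_\mu \cdot |\{T \in \SYT(\mu) : \Des(T) = S\}|$. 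So if I can find a set $S$ for which exactly one SYT of any shape of size $n$ has $\Des(T) = S$, and that SYT has shape $(d+1,1^{n-d-1})$, then the coefficient of $Q_{S,n}$ in $\SF(n,k,\ell)$ is precisely $\langle \SF(n,k,\ell),\, s_{(d+1,1^{n-d-1})}\rangle$, and it reads off from the $Q$-expansion above as $\sum_{\sigma : \Split(\sigma) = S} q^{\sminv(\sigma)}$.

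The correct choice is $S = \{d+1, d+2, \ldots, n-1\}$, and the key lemma I need is: the unique SYT of any shape of size $n$ with descent set exactly $\{d+1,\ldots,n-1\}$ is the canonical hook tableau of shape $(d+1, 1^{n-d-1})$, with first row $1, 2, \ldots, d+1$ and first column below $d+2, d+3, \ldots, n$. Starting from $1 \in (1,1)$, the conditions that $i$ is not a descent for $i = 1, \ldots, d$ successively force $2, 3, \ldots, d+1$ to lie in a row weakly above their predecessor, hence into row~$1$ at positions $(1,2), \ldots, (1,d+1)$. Then the conditions that $i$ is a descent for $i = d+1, \ldots, n-1$ successively force $d+2, d+3, \ldots, n$ into rows $2, 3, \ldots, n-d$; at each step the Young-diagram condition on the filled subshape pins the new entry into column~$1$, ruling out any other placement.

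The main (and essentially only) hurdle is the uniqueness verification for SYT with the specified tail descent set; the rest is bookkeeping. Once the key lemma is established, the coefficient identification gives the first equality, and combining with $\psi$ and Proposition~\ref{prop:Split_on_basis} gives the second equality, completing the proof.
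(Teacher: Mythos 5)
Your proof is correct, and it takes a genuinely different route from the paper for the first equality (the second equality is handled identically, via $\psi$, Corollary~\ref{cor:sminv-x-degree}, and Proposition~\ref{prop:Split_on_basis}). The paper extracts the Schur coefficient via the Gessel/Egge--Loehr--Warrington ``slinky rule'' for compositional Schur functions: it writes $\SF(n,k,\ell)=\sum_\alpha c_\alpha F_\alpha=\sum_\alpha c_\alpha s_\alpha$ with $s_\alpha$ a (possibly signed or zero) Schur function indexed by a composition, and then argues that the only composition $\alpha$ with $s_\alpha=\pm s_{(d+1,1^{n-d-1})}$ is $\alpha=(d+1,1^{n-d-1})$ itself. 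You instead use the classical expansion $s_\mu=\sum_{T\in\SYT(\mu)}Q_{\Des(T),n}$ together with the linear independence of $\{Q_{S,n}\}$, reducing the question to the uniqueness lemma that the only SYT (of any shape of size $n$) with descent set exactly $\{d+1,\dots,n-1\}$ is the canonical hook tableau of shape $(d+1,1^{n-d-1})$; your inductive placement argument for that lemma is sound. The two approaches carry the same combinatorial content---a uniqueness statement that isolates the hook shape---but yours is framed entirely in terms of SYT descent sets and avoids invoking the slinky rule, arguably making it more self-contained and elementary. The paper's approach, on the other hand, dovetails with its explicit Figure~\ref{fig:slinky-examples} illustrating the slinky mechanism and keeps all bookkeeping at the level of compositions rather than tableaux.
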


\begin{proof}
    Since $\SF(n,k,\ell)$ is a symmetric function, its expansion into fundamental quasisymmetric functions $\SF(n,k,\ell) = \sum_{\alpha \vDash n} c_\alpha F_\alpha$ determines its Schur expansion $\SF(n,k,\ell) = \sum_{\alpha \vDash n} c_\alpha s_\alpha$ (see for example \cite{Gessel2019, GarsiaRemmel}).
    Note that the Schur function $s_\alpha$ is indexed by a composition, and it may be simplified to a Schur function indexed by a partition or $0$, using the ``slinky rule'' (see \cite{EggeLoehrWarrington}), which we describe briefly here.
    Draw a compositional Young diagram for $\alpha$ in French notation, and fix all left endpoints of each row, so they can never move. Then let each row fall down as far as possible, such that each row remains a ribbon (hence the term ``slinky''). If this forms a partition Young diagram $\lambda$, then $s_\alpha = \pm s_\lambda$, but if not, then $s_\alpha = 0$. (See Figure~\ref{fig:slinky-examples} for examples.)
    
    Since $ S = \Split(\sigma) = \{d+1,\ldots, n-1\}$, then $\alpha = (d+1,1^{n-d-1})$.
    We claim that the only composition $\alpha$ such that $s_\alpha = \pm s_{(d+1,1^{n-d-1})}$ is $\alpha = (d+1,1^{n-d-1})$. 
    Indexing the positions of the boxes in the compositional Young diagram with Cartesian coordinates starting from $(0,0)$ in the lower left corner. 
    When applying the slinky rule to any other $\alpha$ with a row longer than $1$ except for the bottom row would fall in such a way that a box would be either created in position $(1,1)$, and $\lambda$ would not be a hook shape, or would not fall all the way to $(1,1)$, and not be a partition Young diagram so the resulting Schur function would be $0$.

    Consider the coefficient of $s_{(d+1,1^{n-d-1})}$ in $\SF(n,k,\ell)$. We write:
    \begin{align*}
        \langle \SF(n,k,\ell) , s_{(d+1,1^{n-d-1})} \rangle &= \left\langle \sum_{\alpha \vDash n} c_\alpha F_\alpha , s_{(d+1,1^{n-d-1})} \right\rangle\\
        &= \left\langle \sum_{\alpha \vDash n} c_\alpha s_\alpha , s_{(d+1,1^{n-d-1})} \right\rangle\\
        &= c_{(d+1,1^{n-d-1})}.\\
    \end{align*}
    Let $[F_\alpha]X$ denote the coefficient of $F_\alpha$ in $X$. From the discussion on the slinky rule, we have that 
    \begin{align*}
       c_{(d+1,1^{n-d-1})} & = [F_{(d+1,1^{n-d-1})}] \SF(n,k,\ell)\\
        & = [F_{(d+1,1^{n-d-1})}] \SW_{z;q} (n,k,\ell)\\
        &= [Q_{\{d+1,\ldots,n-1\},n}]\sum_{\sigma \in \SW(1^n,k,\ell)} q^{\sminv(\sigma)} Q_{\Split(\sigma),n}\\
        &= \sum_{\substack{\sigma \in \SW(1^n,k,\ell), \\ \Split(\sigma) = \{d+1,\ldots, n-1\}}} q^{\sminv(\sigma)},
    \end{align*}
    where we used Theorem \ref{thm:INVW-3.1} and Proposition \ref{prop:INVW-quasisymmetric_expansion}.

    Finally, to convert into the formula in terms of $B_n^{(1,2)}$, we can restrict the bijection $\psi$ from Theorem~\ref{thm:bijection} to one on $\{b \in B_n^{(1,2)} \ | \  \deg_\theta(b) = k \text{ and } \deg_\xi(b) = \ell\}$ and $\SW(1^n,k,\ell)$ since it is $q,u,v$-weight preserving. Then the result follows from Proposition~\ref{prop:Split_on_basis}.
\end{proof}

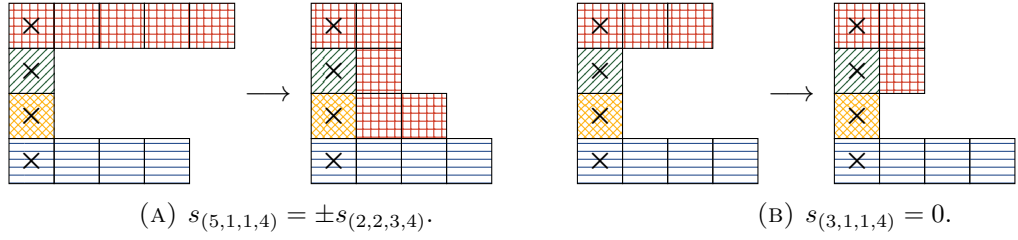
\begin{figure}[ht]
\centering
\begin{subfigure}[t]{0.5\textwidth}
\centering
\begin{tikzpicture}[scale=.6]
        \draw [pattern color=Lapis, pattern=horizontal lines] (0,0) rectangle (1,1) node[midway]{$\bm{\times}$};
        \draw [pattern color=Lapis, pattern=horizontal lines] (1,0) rectangle (2,1);
        \draw [pattern color=Lapis, pattern=horizontal lines] (2,0) rectangle (3,1);
        \draw [pattern color=Lapis, pattern=horizontal lines] (3,0) rectangle (4,1);
        \draw [pattern color=Gamboge, pattern=crosshatch] (0,1) rectangle (1,2) node[midway]{$\bm{\times}$};
        \draw [pattern color=Midori, pattern=north east lines] (0,2) rectangle (1,3) node[midway]{$\bm{\times}$};
        \draw [pattern color=Scarlet, pattern=grid] (0,3) rectangle (1,4) node[midway]{$\bm{\times}$};
        \draw [pattern color=Scarlet, pattern=grid] (1,3) rectangle (2,4);
        \draw [pattern color=Scarlet, pattern=grid] (2,3) rectangle (3,4);
        \draw [pattern color=Scarlet, pattern=grid] (3,3) rectangle (4,4);
        \draw [pattern color=Scarlet, pattern=grid] (4,3) rectangle (5,4);
\end{tikzpicture} \raisebox{2.88em}{$\longrightarrow$} \ 
\begin{tikzpicture}[scale=.6]
        \draw [pattern color=Lapis, pattern=horizontal lines] (0,0) rectangle (1,1) node[midway]{$\bm{\times}$};
        \draw [pattern color=Lapis, pattern=horizontal lines] (1,0) rectangle (2,1);
        \draw [pattern color=Lapis, pattern=horizontal lines] (2,0) rectangle (3,1);
        \draw [pattern color=Lapis, pattern=horizontal lines] (3,0) rectangle (4,1);
        \draw [pattern color=Gamboge, pattern=crosshatch] (0,1) rectangle (1,2) node[midway]{$\bm{\times}$};
        \draw [pattern color=Midori, pattern=north east lines] (0,2) rectangle (1,3) node[midway]{$\bm{\times}$};
        \draw [pattern color=Scarlet, pattern=grid] (0,3) rectangle (1,4) node[midway]{$\bm{\times}$};
        \draw [pattern color=Scarlet, pattern=grid] (1,3) rectangle (2,4);
        \draw [pattern color=Scarlet, pattern=grid] (1,2) rectangle (2,3);
        \draw [pattern color=Scarlet, pattern=grid] (1,1) rectangle (2,2);
        \draw [pattern color=Scarlet, pattern=grid] (2,1) rectangle (3,2);
\end{tikzpicture}
\caption{$s_{(5,1,1,4)} = \pm s_{(2,2,3,4)}$.}
\end{subfigure}
\vspace{1em}

\begin{subfigure}[t]{0.5\textwidth}
\centering
\begin{tikzpicture}[scale=.6]
        \draw [pattern color=Lapis, pattern=horizontal lines] (0,0) rectangle (1,1) node[midway]{$\bm{\times}$};
        \draw [pattern color=Lapis, pattern=horizontal lines] (1,0) rectangle (2,1);
        \draw [pattern color=Lapis, pattern=horizontal lines] (2,0) rectangle (3,1);
        \draw [pattern color=Lapis, pattern=horizontal lines] (3,0) rectangle (4,1);
        \draw [pattern color=Gamboge, pattern=crosshatch] (0,1) rectangle (1,2) node[midway]{$\bm{\times}$};
        \draw [pattern color=Midori, pattern=north east lines] (0,2) rectangle (1,3) node[midway]{$\bm{\times}$};
        \draw [pattern color=Scarlet, pattern=grid] (0,3) rectangle (1,4) node[midway]{$\bm{\times}$};
        \draw [pattern color=Scarlet, pattern=grid] (1,3) rectangle (2,4);
        \draw [pattern color=Scarlet, pattern=grid] (2,3) rectangle (3,4);
\end{tikzpicture} \raisebox{2.88em}{$\longrightarrow$} \ 
\begin{tikzpicture}[scale=.6]
        \draw [pattern color=Lapis, pattern=horizontal lines] (0,0) rectangle (1,1) node[midway]{$\bm{\times}$};
        \draw [pattern color=Lapis, pattern=horizontal lines] (1,0) rectangle (2,1);
        \draw [pattern color=Lapis, pattern=horizontal lines] (2,0) rectangle (3,1);
        \draw [pattern color=Lapis, pattern=horizontal lines] (3,0) rectangle (4,1);
        \draw [pattern color=Gamboge, pattern=crosshatch] (0,1) rectangle (1,2) node[midway]{$\bm{\times}$};
        \draw [pattern color=Midori, pattern=north east lines] (0,2) rectangle (1,3) node[midway]{$\bm{\times}$};
        \draw [pattern color=Scarlet, pattern=grid] (0,3) rectangle (1,4) node[midway]{$\bm{\times}$};
        \draw [pattern color=Scarlet, pattern=grid] (1,3) rectangle (2,4);
        \draw [pattern color=Scarlet, pattern=grid] (1,2) rectangle (2,3);
\end{tikzpicture}
\caption{$s_{(3,1,1,4)} = 0$.}
\end{subfigure}
    \caption{Examples of applying the slinky rule.}
    \label{fig:slinky-examples}
\end{figure}

We can characterize when $\Asc(b) = \{d+1,\ldots,n+1\}$. Via the weight-preserving bijection between $B_n^{(1,2)}$ and $\SW(1^n)$, this generalizes a characterization in the special case of $d=0$ \cite[Proposition 5.8]{IraciNadeauVandenWyngaerd2024}.

\begin{proposition}\label{prop:asc_characterization}
An element $b \in B_n^{(1,2)}$ satisfies $\Asc(b) = \{d+1,\ldots,n-1\}$ if and only if when $b$ is written as \[ b = \pm \prod_{m=1}^n x_m^{\alpha_m} \theta_m^{\beta_m} \xi_m^{\gamma_m},\]
for some $\alpha_m \in \Z_{\geq 0}$ and $\beta_m,\gamma_m \in \{0,1\}$, we have that for some $a \in \{d+1,\ldots,n\}$,
\begin{enumerate}[(a)]
    \item $\alpha_m = \beta_m = \gamma_m = 0$ for all $m \in \{1,\ldots, d+1\}$;
    \item $\beta_m = 0$ and $\alpha_{m-1} < \alpha_m + \gamma_m$ for all $m \in \{d+2,\ldots, a\}$;
    \item $\beta_a = 0$ and $\beta_{a+1} = 1$; and
    \item $\beta_m = 1$ and $\alpha_{m-1} \geq \alpha_m + \gamma_m$ for all $m \in \{a+2,\ldots,n\}$.
\end{enumerate}
\end{proposition}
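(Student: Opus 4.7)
The plan is to unpack the definition of ascent directly, establishing both implications by checking each index $m \in \{1,\ldots,n-1\}$ against the three cases in the definition. No external inputs beyond Corollary~\ref{cor:h_hook} and the ascent definition are needed; the argument is essentially a case split on the values of $\beta_m, \beta_{m+1}$.

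For the forward direction, assume $\Asc(b) = \{d+1,\ldots,n-1\}$. Since this entails $\Asc(b) \subseteq \{d+1,\ldots,n-1\}$, the argument in the proof of Corollary~\ref{cor:h_hook} immediately gives $\alpha_m = \beta_m = \gamma_m = 0$ for all $m \in \{1,\ldots,d+1\}$, yielding condition (a). The crucial structural claim is that the restriction of the $\beta$-sequence to $\{d+1,\ldots,n\}$ is weakly increasing: if some $m_1 \in \{d+1,\ldots,n-1\}$ had $\beta_{m_1} = 1 > 0 = \beta_{m_1+1}$, then $m_1$ would fail all three ascent criteria, contradicting $m_1 \in \Asc(b)$. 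Therefore $\beta_{d+1}, \ldots, \beta_n$ is a string of $0$'s followed by a string of $1$'s; let $a$ be the largest index in $\{d+1,\ldots,n\}$ with $\beta_a = 0$, which exists because $\beta_{d+1} = 0$. Condition (c) records the position of the transition (when $a < n$), and conditions (b) and (d) follow by applying the ascent definition to $m-1$: when $\beta_{m-1} = \beta_m = 0$ we need $\alpha_{m-1} < \alpha_m + \gamma_m$, and when $\beta_{m-1} = \beta_m = 1$ we need $\alpha_{m-1} \geq \alpha_m + \gamma_m$.

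For the backward direction, assume conditions (a)--(d) hold for some $a \in \{d+1,\ldots,n\}$. I would verify directly that $m \in \Asc(b)$ if and only if $m \geq d+1$, partitioning $\{1,\ldots,n-1\}$ into four ranges: on $\{1,\ldots,d\}$ all relevant variables vanish by (a), so none of the three ascent conditions can hold; on $\{d+1,\ldots,a-1\}$ both $\beta$'s are zero and (b) gives the strict inequality required for the third case of the ascent definition; at index $a$ (when $a \leq n-1$) the jump $\beta_a < \beta_{a+1}$ from (c) triggers the first case; and on $\{a+1,\ldots,n-1\}$ both $\beta$'s equal $1$ and (d) supplies the weak inequality for the second case.

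The main obstacle, though a mild one, is the bookkeeping around the degenerate values $a = d+1$ (where (b) is an empty range), $a = n$ (where the ``$\beta_{a+1} = 1$'' clause of (c) and all of (d) are to be read as vacuous, corresponding to all $\beta_m$ vanishing), and $d = n-1$ (where the prescribed ascent set is empty and (a) forces $b = 1$). Once the case split is organized by the four ranges above, each subcase is a one-line verification.
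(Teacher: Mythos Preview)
Your proposal is correct and follows essentially the same approach as the paper: both directions are handled by direct appeal to the ascent definition, condition (a) is obtained from the proof of Corollary~\ref{cor:h_hook}, and the key structural observation is that once $\beta_i = 1$ one can never return to $\beta_j = 0$ for $j > i$ (your ``weakly increasing'' claim). The paper's proof is slightly terser, treating the backward direction in one sentence, while you spell out the four ranges explicitly, but the substance is the same.
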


Note that if $a=d+1$, then criterion (b) is skipped. If $a = n$, then criterion (c) is skipped. If $a = n-1$ or $a = n$, then criterion (d) is skipped.

\begin{proof}
    First, observe that if $b$ satisfies all of the given criteria, then $\Asc(b) = \{d+1,\ldots,n-1\}$, by applying the definition of an ascent of $b$.

    On the other hand, suppose that $\Asc(b) = \{d+1,\ldots,n-1\}$.
    From the characterization of $\Asc(b) \subseteq \{d+1,\ldots, n-1\}$ given in the proof of Corollary~\ref{cor:h_hook}, criterion (a) is satisfied for all $m \in \{1,\ldots,d+1\}$. 
    Next, since $d+1$ is an ascent, either $\beta_{d+2} = 0$ with $\alpha_{d+1} < \alpha_{d+2} + \gamma_{d+2}$ (satisfying criterion (b) for $m=d+2$), or $\beta_{d+2} = 1$ (satisfying criterion (c) for $a = d+1$).
    In the former case, we can continue satisfying criterion (b) for $m$ in an interval $\{d+1,\ldots,a\}$. Eventually, if we have not reached the end already, we will hit criterion (c) when some $\beta_{a+1} = 1$.
    At no point once we have reached any $\beta_i = 1$ will we be able to return to any $\beta_j =0$ for $j >i$, as an immediate consequence of the definition of an ascent of $b$.
    This implies that if we have not reached the end, since we continue to have ascents, we will continue to have $\beta_m =1$ and $\alpha_{m-1} \geq \alpha_m + \gamma_m$ for the remaining $m$, satisfying criterion (d). 
\end{proof}

As a consequence of \cite[Theorem 8.2]{D'Adderio-Romero}, the following explicit formula was given by Iraci, Nadeau, and Vanden Wyngaerd for $\lambda = (1^n)$, which indexes the sign character. 

\begin{theorem}[\!\!{\cite[Theorem 5.6]{IraciNadeauVandenWyngaerd2024}}]
    \[\langle \SF(n,k,\ell) , s_{(1^n)}\rangle = q^{\binom{n-k-\ell}{2}}\qbinom{n-1}{k+\ell}_q \qbinom{k+\ell}{k}_q.\]
\end{theorem}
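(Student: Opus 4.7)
The plan is to invoke Theorem~\ref{thm:schur-coeff-hook} at $d = 0$ to identify $\langle \SF(n,k,\ell), s_{(1^n)} \rangle$ with the $q$-weighted enumeration
\[
F(n,k,\ell) := \sum_{\substack{b \in B_n^{(1,2)},\\ \deg_\theta(b) = k,\ \deg_\xi(b) = \ell,\\ \Asc(b) = \{1,\ldots,n-1\}}} q^{\deg_x(b)},
\]
and then evaluate $F(n,k,\ell)$ directly using the structural description in Proposition~\ref{prop:asc_characterization}. For each such $b$, the transition index $a$ is forced to equal $n - k$, so the $\theta$-support of $b$ is exactly $\{n-k+1,\ldots,n\}$. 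This splits the underlying modified Motzkin path into a \emph{region 1} of length $n-k$ consisting of up-steps and horizontal $\xi$-steps, and a \emph{region 2} of length $k$ consisting of horizontal $\theta$-steps and down-steps.

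In region 1, I would show by induction on $m$ that the ascent conditions $\alpha_{m-1} < \alpha_m + \gamma_m$ combined with the basis inequality $\alpha_m \leq \alpha_m(T,S)$ force $\alpha_m = \alpha_m(T,S)$ throughout. Hence the $x$-degree in region 1 is completely determined by the locations of the $p$ horizontal $\xi$-steps among positions $\{2,\ldots,n-k\}$. A standard enumeration of $p$-subsets of $\{1,\ldots,n-k-1\}$ then gives a region 1 contribution of $q^{\binom{n-k-p}{2}} \qbinom{n-k-1}{p}_q$.

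In region 2, the constraints become $\alpha_{m-1} \geq \alpha_m + \gamma_m$ and $\alpha_m \geq 0$, with initial value $\alpha_{n-k} = n-k-p-1$ inherited from region 1. The substitution $\beta_m = \alpha_{m-1} - \alpha_m - \gamma_m \geq 0$ turns the $x$-degree into a linear function of the $\beta_m$ and the down-step indicators $\gamma_m$ with coefficients $n-m+1$. Introducing a slack variable to absorb the inequality $\sum \beta_m \leq n-k-\ell-1$ decouples the $\beta$-sum from the $\gamma$-sum: the $\beta$-sum collapses via the $q$-series identity $\prod_{c=0}^{k}(1-q^{-c}z)^{-1} = \sum_M z^M q^{-kM}\qbinom{M+k}{k}_q$, and the $\gamma$-sum is a standard enumeration of $(\ell-p)$-subsets of $\{1,\ldots,k\}$. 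After cancellation of the $q$-powers, the region 2 contribution comes out to $q^{\binom{\ell-p}{2}} \qbinom{k}{\ell-p}_q \qbinom{n-\ell-1}{k}_q$.

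Combining the two regions and summing over $p$ reduces the theorem to the identity
\[
\sum_p q^{\binom{n-k-p}{2} + \binom{\ell-p}{2}} \qbinom{n-k-1}{p}_q \qbinom{k}{\ell-p}_q = q^{\binom{n-k-\ell}{2}} \qbinom{n-1}{\ell}_q,
\]
which follows from the $q$-Vandermonde identity once one verifies the exponent relation $\binom{n-k-p}{2} + \binom{\ell-p}{2} = \binom{n-k-\ell}{2} + (n-k-1-p)(\ell-p)$. The $q$-multinomial identity $\qbinom{n-\ell-1}{k}_q \qbinom{n-1}{\ell}_q = \qbinom{n-1}{k+\ell}_q \qbinom{k+\ell}{k}_q$ then puts the answer in the claimed form. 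I expect the main obstacle to be the region 2 bookkeeping: the $\alpha$-sequence must be recoded through $\beta_m$ and a slack variable before the $\gamma$-sum and $\beta$-sum separate, and only after this decoupling do the $q$-series identities and $q$-Vandermonde apply cleanly.
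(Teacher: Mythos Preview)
Your proposal is correct and follows essentially the same route as the paper. The paper does not prove this statement directly (it is cited from \cite{IraciNadeauVandenWyngaerd2024}), but it proves the hook generalization Theorem~\ref{thm:qbinomial}, and your argument is precisely the $d=0$ specialization of that proof: invoke Theorem~\ref{thm:schur-coeff-hook} and Proposition~\ref{prop:asc_characterization}, split into the two regions dictated by the transition index $a=n-k$, compute each region's $q$-contribution, and combine via $q$-Vandermonde and the $q$-multinomial identity.

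The one genuine difference is in how you handle the two regions. In region~1 you observe (correctly, since $d=0$) that the ascent inequalities force $\alpha_m$ to equal its maximum $\alpha_m(T,S)$ at every step, so the filling is rigid; the paper instead separates a ``minimal structure'' from a free part above it, which at $d=0$ degenerates since the free part has height $d=0$. In region~2 you recode via the differences $\alpha_{m-1}-\alpha_m-\gamma_m$ and a slack variable, then apply a $q$-series identity to collapse the resulting sums; the paper instead argues combinatorially, again identifying a minimal staircase and a free rectangular region above it that is counted by $\qbinom{n-1-\ell}{k}_q$. Both routes yield the same intermediate factor $q^{\binom{\ell-p}{2}}\qbinom{k}{\ell-p}_q\qbinom{n-\ell-1}{k}_q$, so the approaches are equivalent; the paper's is slightly more pictorial and generalizes directly to arbitrary $d$, while yours is more algebraic and tailored to $d=0$.
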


Using the characterization given in Proposition~\ref{prop:asc_characterization}, we give the following generalization to hook shapes.

\begin{theorem}\label{thm:qbinomial} 
For $0 \leq d \leq n-1$,
    \[\langle \SF(n,k,\ell) , s_{(d+1,1^{n-d-1})}\rangle = q^{\binom{n-d-k-\ell}{2}}\qbinom{n-1-d}{\ell}_q\qbinom{n-1-k}{d}_q\qbinom{n-1-\ell}{k}_q.\]
\end{theorem}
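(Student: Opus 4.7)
The plan is to apply Theorem~\ref{thm:schur-coeff-hook} to rewrite the left-hand side as
\begin{equation*}
\sum_{\substack{b \in B_n^{(1,2)},\\ \deg_\theta(b) = k,\ \deg_\xi(b) = \ell,\\ \Asc(b) = \{d+1,\ldots,n-1\}}} q^{\deg_x(b)},
\end{equation*}
and then invoke Proposition~\ref{prop:asc_characterization} to decompose such $b$: the first $d+1$ positions are up-steps contributing nothing to any degree, positions $\{d+2,\ldots,n-k\}$ form ``Region~I'' containing only up-steps and $\xi$-steps, and positions $\{n-k+1,\ldots,n\}$ form ``Region~II'' containing only $\theta$-steps and down-steps (the value $a = n-k$ is forced by $k = \deg_\theta(b)$). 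Letting $\ell_1$ denote the number of $\xi$-steps in Region~I and $\ell_2 = \ell - \ell_1$ the number of down-steps in Region~II, the full sum factors as $\sum_{\ell_1=0}^{\ell} G_I(\ell_1) \cdot G_{II}(\ell - \ell_1)$, where $G_I$ and $G_{II}$ are the $q$-weighted generating functions on each region.

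For Region~I, I would substitute $\epsilon_m = \alpha_m(T,S) - \alpha_m$, so that the ascent condition $\alpha_{m-1} < \alpha_m + \gamma_m$ becomes the statement that $\epsilon$ is weakly decreasing with $\epsilon_{d+1} = d$ and $\epsilon_m \geq 0$. Writing $q^{\sum_m \alpha_m} = q^{\sum_m \alpha_m(T,S)} \cdot q^{-\sum_m \epsilon_m}$ separates the sum into a factor depending on the $\xi$-placements (via $\sum_m \alpha_m(T,S)$) and a factor depending on the $\epsilon$-sequence. Applying reflections to convert the negative $q$-exponents into standard partition sums (both for $\epsilon$, via $\epsilon_m \mapsto d - \epsilon_m$, and for the $\xi$-positions via $j_i \mapsto (n-k)+(d+2)-j_i$), together with the telescoping identity $\binom{n-k}{2} - \binom{d+1}{2} - d(n-k-d-1) = \binom{n-k-d}{2}$ for the cumulative exponent, should yield
\begin{equation*}
G_I(\ell_1) = q^{\binom{n-k-d-\ell_1}{2}} \qbinom{n-k-d-1}{\ell_1}_q \qbinom{n-k-1}{d}_q.
\end{equation*}
For Region~II, the analogous substitution $\alpha'_m = \alpha_m - D_m$, where $D_m$ is the number of down-steps strictly after position $m$, converts the descent-like constraint $\alpha_m \leq \alpha_{m-1} - \gamma_m$ into the statement that the $\alpha'_m$ form a weakly decreasing non-negative sequence bounded above by $n-k-\ell-1$, giving
\begin{equation*}
G_{II}(\ell_2) = q^{\binom{\ell_2}{2}} \qbinom{k}{\ell_2}_q \qbinom{n-\ell-1}{k}_q.
\end{equation*}

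To finish, I would pull out the $\ell_1$-independent factors $\qbinom{n-k-1}{d}_q \qbinom{n-\ell-1}{k}_q$ and apply the $q$-Vandermonde identity $\qbinom{m+n'}{k'}_q = \sum_{j} q^{(m-j)(k'-j)} \qbinom{m}{j}_q \qbinom{n'}{k'-j}_q$ with $m = n-k-d-1$, $n' = k$, $k' = \ell$, after recasting the combined $q$-exponent in the summand via the elementary identity $\binom{x}{2} + \binom{y}{2} = \binom{x-y}{2} + (x-1)y$ (with $x = n-k-d-\ell_1$, $y = \ell-\ell_1$), which isolates the desired prefactor $q^{\binom{n-d-k-\ell}{2}}$. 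The main obstacle is the Region~I computation: the chain of substitutions and reflections produces a delicate cancellation of $q$-exponents that must be tracked carefully before the clean closed form emerges; by contrast the Region~II computation and the final $q$-Vandermonde step are comparatively routine.
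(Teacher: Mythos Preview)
Your proposal is correct and follows essentially the same route as the paper's proof: both reduce via Theorem~\ref{thm:schur-coeff-hook} and Proposition~\ref{prop:asc_characterization}, split the path into the same three regions (with $a=n-k$ forced), introduce the parameter $f=\ell_1$ counting $\xi$-steps in the middle region, obtain the identical per-region generating functions $G_I(\ell_1)=q^{\binom{n-k-d-\ell_1}{2}}\qbinom{n-k-d-1}{\ell_1}_q\qbinom{n-k-1}{d}_q$ and $G_{II}(\ell_2)=q^{\binom{\ell_2}{2}}\qbinom{k}{\ell_2}_q\qbinom{n-\ell-1}{k}_q$, and finish with the same $q$-Vandermonde summation (Lemma~\ref{lem:Vandermonde}). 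The only cosmetic difference is that the paper phrases the Region~I/II computations pictorially via a ``minimal structure'' and a gravity argument, whereas your substitutions $\epsilon_m=\alpha_m(T,S)-\alpha_m$ and $\alpha'_m=\alpha_m-D_m$ carry out the same bijection algebraically.
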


To prove the theorem, we will make use of the following lemma, which is an immediate consequence of the $q$-Chu-Vandermonde identity.

\begin{lemma}\label{lem:Vandermonde}
    For any positive integer $n$ and nonnegative integers $k,\ell < n-d$,
    \[ q^{\binom{n-d-k-\ell}{2}}\qbinom{n-d-1}{\ell}_q  = \sum_{f=0}^\ell q^{\binom{n-d-k-f}{2} + \binom{\ell-f}{2}} \qbinom{n-d-1-k}{f}_q \qbinom{k}{\ell-f}_q .\]
\end{lemma}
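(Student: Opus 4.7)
The plan is to apply a standard form of the $q$-Chu-Vandermonde identity and then verify that the $q$-powers match. Specifically, the relevant form of $q$-Chu-Vandermonde is
\[ \qbinom{A+B}{C}_q = \sum_{j=0}^{C} q^{(C-j)(B-j)} \qbinom{A}{C-j}_q \qbinom{B}{j}_q. \]
First I would set $A = k$, $B = n-d-1-k$, $C = \ell$, and $j = f$, so $A + B = n-d-1$. This immediately gives
\[ \qbinom{n-d-1}{\ell}_q = \sum_{f=0}^{\ell} q^{(\ell-f)(n-d-1-k-f)} \qbinom{k}{\ell-f}_q \qbinom{n-d-1-k}{f}_q. \]

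Next I would multiply both sides by $q^{\binom{n-d-k-\ell}{2}}$. The main (and essentially only) step is then to check the exponent identity
\[ \binom{n-d-k-\ell}{2} + (\ell-f)(n-d-1-k-f) = \binom{n-d-k-f}{2} + \binom{\ell-f}{2}. \]
Setting $a := n-d-k$ to streamline notation, this becomes
\[ \binom{a-\ell}{2} + (\ell-f)(a-1-f) = \binom{a-f}{2} + \binom{\ell-f}{2}. \]
This is a routine calculation: using the identity $u(u-1)-v(v-1) = (u-v)(u+v-1)$ with $u = a-f$ and $v = a-\ell$ gives $\binom{a-f}{2} - \binom{a-\ell}{2} = \tfrac{1}{2}(\ell-f)(2a-f-\ell-1)$, and combining this with $\binom{\ell-f}{2} = \tfrac{1}{2}(\ell-f)(\ell-f-1)$ collapses the right-hand side to $(\ell-f)(a-1-f)$, matching the left-hand side.

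There is no real obstacle here; the only subtle point is selecting the correct form of $q$-Chu-Vandermonde (the version with $q^{(C-j)(B-j)}$ rather than $q^{j(A-C+j)}$), since only this version produces exponents that line up with $\binom{n-d-k-f}{2} + \binom{\ell-f}{2}$ after multiplication by $q^{\binom{n-d-k-\ell}{2}}$. Once the correct form is chosen, the remainder is an elementary binomial coefficient manipulation.
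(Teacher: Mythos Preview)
Your proposal is correct and is essentially identical to the paper's own proof: both apply the $q$-Chu-Vandermonde identity with the same substitution (yielding the factor $q^{(\ell-f)(n-d-1-k-f)}$) and then verify the binomial-coefficient identity $\binom{n-d-k-f}{2} + \binom{\ell-f}{2} - \binom{n-d-k-\ell}{2} = (\ell-f)(n-d-1-k-f)$. Your write-up simply spells out the exponent check in slightly more detail than the paper does.
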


\begin{proof}
Recall the $q$-Chu-Vandermonde identity (see for example \cite[Proposition 2.4]{IraciNadeauVandenWyngaerd2024} or \cite[Chapter 1, Solution to Exercise 100]{StanleyEC1}):
\[\qbinom{j}{a}_q = \sum_{i=0}^a q^{(r-i)(a-i)}\qbinom{r}{i}_q\qbinom{j-r}{a-i}_q.\]
Substituting $i \mapsto f$, $a \mapsto \ell$, $j \mapsto n-d-1$, and $r \mapsto n-d-1-k$ gives
\[\qbinom{n-d-1}{\ell}_q = \sum_{f=0}^\ell q^{(n-d-1-k-f)(\ell-f)}\qbinom{n-d-1-k}{f}_q\qbinom{k}{\ell-f}_q,\]
and the result follows from $\binom{n-d-k-f}{2} + \binom{\ell-f}{2} - \binom{n-d-k-\ell}{2} = (\ell-f)(n-d-1-k-f)$.
\end{proof}

\begin{proof}[Proof of Theorem~\ref{thm:qbinomial}]
    By Theorem~\ref{thm:schur-coeff-hook}, it remains to show that 
    \[\sum_{\substack{b \in B_{n}^{(1,2)}, \\ \deg_\theta(b) = k,\\ \deg_\xi(b) = \ell,\\ \Asc(b) = \{d+1,\ldots,n-1\}}} q^{\deg_x(b)} = q^{\binom{n-d-k-\ell}{2}}\qbinom{n-1-d}{\ell}_q\qbinom{n-1-k}{d}_q\qbinom{n-1-\ell}{k}_q.\]
    Recall the characterization of when $\Asc(b) = \{d+1,\ldots,n-1\}$ from Proposition~\ref{prop:asc_characterization}. Throughout the proof, we refer to Figure~\ref{fig:proof_picture} for a guiding example of what such a $b$ will look like.
    \begin{figure}[ht]
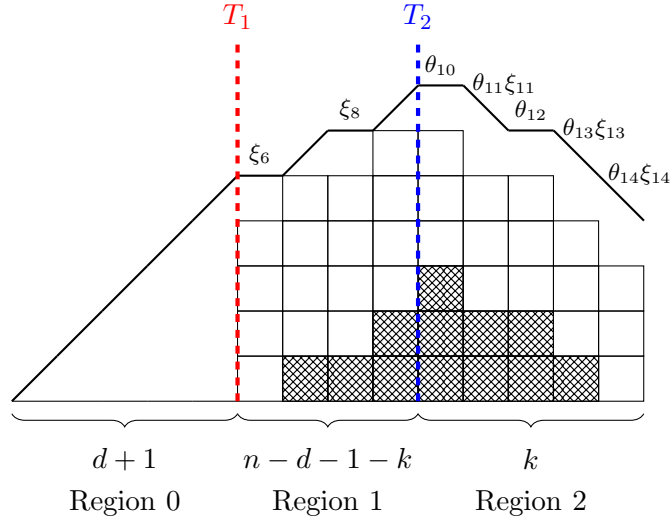

        \centering
        \motzkinpathbig{up,up,up,up,up,flat-xi,up,flat-xi,up,flat-theta,down,flat-theta,down,down}{0,0,0,0,0,4,5,5,6,6,5,5,4,3}{0,0,0,0,0,0,1,1,2,3,2,2,1,0}{}{5}{9}{8}
        \caption{A diagram showing the three regions for some $b$ with $\Asc(b) = \{d+1,\ldots,n-1\}$. The boxes shaded in crosshatch indicate where there is a minimal structure of boxes which must be filled in, and then above the minimal structure, boxes may be filled in, subject to certain increasing/decreasing conditions.}
        \label{fig:proof_picture}
    \end{figure}
    We record some facts on such a $b$. Region 0 will consist of only up-steps with no contribution to the $x$-degree. This corresponds to condition (a) of Proposition~\ref{prop:asc_characterization}. Region 0 cannot be empty, as $d + 1 \geq 1$. 
    
    At line $T_1$, we change to Region 1. This corresponds to condition (b) of Proposition~\ref{prop:asc_characterization}. The path in this region can be any sequence of up-steps or horizontal steps with decoration $\xi_i$. Region 1 may be empty. 
    Throughout Region 1, when there is a horizontal step with decoration $\xi_i$, the part of the staircase that is filled in must weakly increase, and when there is an up-step, it must strictly increase. 
    This implies that there exists some part of the staircase that must always be filled in, which we call the \textit{minimal structure for Region 1} and encode as a vector of column heights: $\MS(1) = (\MS(1)_1,\ldots, \MS(1)_{n-d-1-k})$. 
    We determine $\MS(1)$ recursively: for $1 \leq i \leq n-d-1-k$,
    \[\MS(1)_i = \begin{cases}
        \MS(1)_{i-1} + 1 &\text{ if $p_{d+1+i}$ is an up-step,}\\
        \MS(1)_{i-1} &\text{ if $p_{d+1+i}$ is a horizontal step with decoration $\xi_{d+1+i}$,}
    \end{cases}\]
    where we use the initial condition $\MS(1)_0 = 0$, although that is not considered part of the minimal structure. Recall that $p_{d+1+i}$ denotes the $(d+1+i)$-th step in the path, which is the $i$-th step in Region 1.

    Changing from Region 1 to Region 2 at line $T_2$ corresponds to condition (c) of Proposition~\ref{prop:asc_characterization}, and then the rest of Region 2 corresponds to condition (d). The path in this region can be any sequence of horizontal steps with decoration $\theta_i$ or down-steps with decoration $\theta_i\xi_i$. Region 2 may be empty. 
    Throughout Region 2, when there is a horizontal step with decoration $\theta_i$, the part of the staircase that is filled must weakly decrease, and when there is a down-step with decoration $\theta_i\xi_i$, it must strictly decrease. 
    Again, this implies that there exists some part of the staircase that must always be filled in, which we call the \textit{minimal structure for Region 2} and encode as a vector of column heights: $\MS(2) = (\MS(2)_1,\ldots, \MS(2)_{k})$. 
    We determine $\MS(2)$ recursively: for $k-1 \geq i \geq 1$,
    \[\MS(2)_i = \begin{cases}
        \MS(2)_{i+1} &\text{ if $p_{n-k+i}$ is a horizontal step with decoration $\xi_{n-k+i}$,}\\
        \MS(2)_{i+1} + 1 &\text{ if $p_{n-k+i}$ is a down-step with decoration $\theta_{n-k+i}\xi_{n-k+i}$,}
    \end{cases}\]
    with initial condition $\MS(2)_{k} = 0$. 
    Recall that $p_{n-k+i}$ denotes the $(n-k+i)$-th step in the path, which is the $i$-th step in Region 2.

    Now we are ready to prove the claimed $q$-enumeration. First, note that since $k,\ell,d,n$ are all fixed, the location of the lines $T_1$ and $T_2$, and thus the regions, are fixed. It only remains to count the possible paths and fillings in Regions 1 and 2. Let $f$ denote the number of horizontal steps with decoration $\xi_i$, all of which occur in Region 1. Note that there are $\binom{n-d-1-k}{f}$ ways to pick which of the steps in Region 1 will be horizontal steps with decoration $\xi_i$. 
    
    Consider $\MS(1)$: there is a classical staircase shape with heights $(1,2,3,\ldots,n-d-1-k-f)$ (corresponding to the up-steps) which is interleaved with $f$ additional components (corresponding to the horizontal steps with decoration $\xi_i$) which are equal in height to whatever immediately precedes them. So the contribution to the $q$-enumeration is $q^{\binom{n-d-k-f}{2}}$ by the classical staircase and $\qbinom{n-d-1-k}{f}_q$ by the additional interleaving components. See Figure~\ref{fig:region1} for an example.

    \begin{figure}[ht!]
        \centering
        \begin{tikzpicture}[scale =0.84]
                    \def\sep{3.1}
        \node[anchor=south west] at (0, 0) {
        \motzkinpathmiddle{up,up,up,up,up,flat-xi,flat-xi,flat-xi,flat-xi}{0,0,0,0,0,4,4,4,4}{0,0,0,0,0,0,0,0,0}{}{5}{9}{8}
        };
        \node[anchor=south west] at (\sep, 0) {
        \motzkinpathmiddle{up,up,up,up,up,flat-xi,flat-xi,flat-xi,up}{0,0,0,0,0,4,4,4,5}{0,0,0,0,0,0,0,0,1}{}{5}{9}{8}
        };
        \node[anchor=south west] at (2*\sep, 0) {
        \motzkinpathmiddle{up,up,up,up,up,flat-xi,flat-xi,up,flat-xi}{0,0,0,0,0,4,4,5,5}{0,0,0,0,0,0,0,1,1}{}{5}{9}{8}
        };
        \node[anchor=south west] at (3*\sep, 0) {
        \motzkinpathmiddle{up,up,up,up,up,flat-xi,up,flat-xi,flat-xi}{0,0,0,0,0,4,5,5,5}{0,0,0,0,0,0,1,1,1}{}{5}{9}{8}
        };
        \node[anchor=south west] at (0, -4.7) {
        \motzkinpathmiddle{up,up,up,up,up,up,flat-xi,flat-xi,flat-xi}{0,0,0,0,0,5,5,5,5}{0,0,0,0,0,1,1,1,1}{}{5}{9}{8}
        };
        \node[anchor=south west] at (\sep, -4.7) {
        \motzkinpathmiddle{up,up,up,up,up,flat-xi,flat-xi,up,up}{0,0,0,0,0,4,4,5,6}{0,0,0,0,0,0,0,1,2}{}{5}{9}{8}
        };
        \node[anchor=south west] at (2*\sep, -4.7) {
        \motzkinpathmiddle{up,up,up,up,up,flat-xi,up,flat-xi,up}{0,0,0,0,0,4,5,5,6}{0,0,0,0,0,0,1,1,2}{}{5}{9}{8}
        };
        \node[anchor=south west] at (3*\sep, -4.7) {
        \motzkinpathmiddle{up,up,up,up,up,up,flat-xi,flat-xi,up}{0,0,0,0,0,5,5,5,6}{0,0,0,0,0,1,1,1,2}{}{5}{9}{8}
        };
        \node[anchor=south west] at (0, -10) {
        \motzkinpathmiddle{up,up,up,up,up,flat-xi,up,up,flat-xi}{0,0,0,0,0,4,5,6,6}{0,0,0,0,0,0,1,2,2}{}{5}{9}{8}
        };
        \node[anchor=south west] at (\sep, -10) {
        \motzkinpathmiddle{up,up,up,up,up,up,flat-xi,up,flat-xi}{0,0,0,0,0,5,5,6,6}{0,0,0,0,0,1,1,2,2}{}{5}{9}{8}
        };
        \node[anchor=south west] at (2*\sep, -10) {
        \motzkinpathmiddle{up,up,up,up,up,up,up,flat-xi,flat-xi}{0,0,0,0,0,5,6,6,6}{0,0,0,0,0,1,2,2,2}{}{5}{9}{8}
        };
        \node[anchor=south west] at (3*\sep, -10) {
        \motzkinpathmiddle{up,up,up,up,up,flat-xi,up,up,up}{0,0,0,0,0,4,5,6,7}{0,0,0,0,0,0,1,2,3}{}{5}{9}{8}
        };
        \node[anchor=south west] at (0, -16) {
        \motzkinpathmiddle{up,up,up,up,up,up,flat-xi,up,up}{0,0,0,0,0,5,5,6,7}{0,0,0,0,0,1,1,2,3}{}{5}{9}{8}
        };
        \node[anchor=south west] at (\sep, -16) {
        \motzkinpathmiddle{up,up,up,up,up,up,up,flat-xi,up}{0,0,0,0,0,5,6,6,7}{0,0,0,0,0,1,2,2,3}{}{5}{9}{8}
        };
        \node[anchor=south west] at (2*\sep, -16) {
        \motzkinpathmiddle{up,up,up,up,up,up,up,up,flat-xi}{0,0,0,0,0,5,6,7,7}{0,0,0,0,0,1,2,3,3}{}{5}{9}{8}
        };
        \node[anchor=south west] at (3*\sep, -16) {
        \motzkinpathmiddle{up,up,up,up,up,up,up,up,up}{0,0,0,0,0,5,6,7,8}{0,0,0,0,0,1,2,3,4}{}{5}{9}{8}
        };
                \end{tikzpicture}
        \caption{All possible paths for Region 1, when $d+1 = 5$ and $n-d-1-k = 4$, along with each path's minimal structure shaded in crosshatch. The contribution of $f=4$ is $q^{\binom{1}{2}}\qbinom{4}{4}_q = 1$, of $f=3$ is $q^{\binom{2}{2}}\qbinom{4}{3}_q = q+q^2+q^3+q^4$, of $f=2$ is $q^{\binom{3}{2}}\qbinom{4}{2}_q = q^3+q^4+2q^5+q^6+q^7$, of $f=1$ is $q^{\binom{4}{2}}\qbinom{4}{1}_q = q^6+q^7+q^8+q^9$, and of $f=0$ is $q^{\binom{5}{2}}\qbinom{4}{0}_q = q^{10}$.}
        \label{fig:region1}
    \end{figure}

    Similarly, consider $\MS(2)$: there is a reversed classical staircase shape with heights $(\ell-f-1,\ldots,2,1,0)$ (corresponding to the down-steps with decoration $\theta_i\xi_i$) which is interleaved with $\ell-f$ additional components (corresponding to the horizontal steps with decoration $\theta_i$) which are equal in height to whatever immediately precedes them (or height $\ell-f$ if nothing in Region 2 precedes it). So the contribution to the $q$-enumeration is $q^{\binom{\ell-f}{2}}$ by the classical staircase and $\qbinom{k}{\ell-f}_q$ by the additional interleaving components.
    
    By Lemma~\ref{lem:Vandermonde}, by summing over all possible values for $f$, we get that the contribution of the minimal structures $\MS(1)$ and $\MS(2)$ to the $q$-enumeration is $q^{\binom{n-d-k-\ell}{2}}\qbinom{n-d-1}{\ell}_q$.

    All that remains is to consider how we can fill in the region above the minimal structure. First, observe that by construction, the boxes for which we have choice to fill in are equinumerous within each column in Region 1 (there are $d$ boxes) and within Region 2 (there are $n-k-\ell-1$). There is one more choice for filling a column than the number of boxes, since it is possible to leave all empty. In Region 1, the criteria that:
    \begin{itemize}
        \item when there is a horizontal step with decoration $\xi_i$, the part of the staircase that is filled must weakly increase, and when there is an up-step, it must strictly increase
    \end{itemize}
    upon deletion of the minimal structure $\MS(1)$, and letting the remaining boxes fall by gravity becomes: 
    \begin{itemize}
        \item the part of the staircase that is filled must weakly increase.
    \end{itemize} See Figure~\ref{fig:gravity}.
    \begin{figure}
        \centering
        \begin{tikzpicture}
        \node[anchor=south west] at (0, 0) {
        \motzkinpathmiddle{up,up,up,up,up,flat-xi,up,flat-xi,up}{0,0,0,0,0,4,5,5,6}{0,0,0,0,0,0,1,1,2}{}{5}{9}{8} };
        \node[anchor=south west] at (2.8, 2) {$\longrightarrow$};
        \node[anchor=south west] at (2.6, 0) {
        \motzkinpathmiddle{up,up,up,up,up}{0,0,0,0,0,4,4,4,4}{0,0,0,0,0,0,0,0,0}{}{5}{9}{8} };
        \end{tikzpicture}
        \caption{A demonstration of deleting the minimal structure from the maximal staircase, and letting the remaining boxes fall by gravity. This will always result in a rectangular shape.}
        \label{fig:gravity}
    \end{figure}
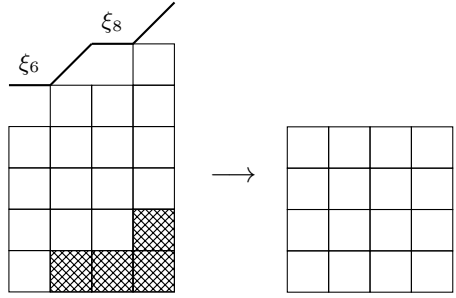
    Then this is counted by the number of length $n-d-1-k$ sequences on $d+1$ letters, which is $\multiset{d+1}{n-d-1-k} = \binom{n-1-k}{d}$.\footnote{$\multiset{a}{b}$ denotes the multiset coefficient (see for example \cite[Chapter 1.2]{StanleyEC1}).} Keeping track of the effect of the fillings on the $q$-weights gives $\qbinom{n-1-k}{d}_q$.

    Following similar analysis for Region 2, where we delete the minimal structure $\MS(2)$, we find what we need to count is the number of length $k$ sequences on $n-k-\ell$ letters, which is $\multiset{n-k-\ell}{k} = \binom{n-1-\ell}{k}$. Keeping track of the effect of the fillings on the $q$-weights gives $\qbinom{n-1-\ell}{k}_q$.

    Finally, multiplying $q^{\binom{n-d-k-\ell}{2}}\qbinom{n-d-1}{\ell}_q $ by $\qbinom{n-1-k}{d}_q$ and $\qbinom{n-1-\ell}{k}_q$ proves the theorem.
\end{proof}

\section{The conjectural basis in type B}\label{sec:typeB}

In this section, we describe analogous results and conjectures where the Weyl group of type $\mathfrak{B}_n$ replaces the symmetric group $\mathfrak{S}_n$. We denote the ring in question by $R_{\mathfrak{B}_n}^{(1,2)}$. We describe the Kim--Rhoades basis for $R_{\mathfrak{B}_n}^{(0,2)}$ and Sagan--Swanson's conjectural super-Artin basis for $R_{\mathfrak{B}_n}^{(1,1)}$.

Define the set of \textit{(type $B$) modified Motzkin paths} of length $n$, $\Pi(n)_{\geq 0}$,\footnote{Note the difference in notation between this and in Section 2. The difference between the definitions is that paths either must not go below $y=0$ in $\Pi(n)_{\geq 0}$, or $y=1$ after the first step in $\Pi(n)_{> 0}$.} to be the set of all paths $\pi = (p_1,\ldots,p_n)$ in $\Z^2$ such that each step $p_i$ is one of: 
\begin{enumerate}[(a)]
    \item an up-step $(1,1)$,
    \item a horizontal step $(1,0)$ with decoration $\theta_i$,
    \item a horizontal step $(1,0)$ with decoration $\xi_i$,
    \item or a down-step $(1,-1)$ with decoration $\theta_i\xi_i$,
\end{enumerate}   
where the path never goes below the horizontal line $y=0$. The weight of $\pi \in \Pi(n)_{\geq 0}$ is defined in the same way as in Section~\ref{sec:background}.

\begin{definition}[\!\!\cite{KimRhoades2022}]
    The \textit{type $B$ Kim--Rhoades basis} $B_{\mathfrak{B}_n}^{(0,2)}$ is the set of all weights of the modified Motzkin paths $\pi \in \Pi(n)_{\geq0}$, that is,
    \[ B_{\mathfrak{B}_n}^{(0,2)} := \{ \wt(\pi)\, | \, \pi \in \Pi(n)_{\geq0} \}.\]
\end{definition} 

We are justified in calling it a basis because of the following result, which is a special case of a more general, type-independent result of Kim and Rhoades.

\begin{theorem}[\!\!\cite{KimRhoades2022}]
    The type $B$ Kim--Rhoades basis $B_{\mathfrak{B}_n}^{(0,2)}$ is a basis for $R_{\mathfrak{B}_n}^{(0,2)}$.
\end{theorem}

Next, we recall the conjectural type $B$ super-Artin basis, due to Sagan and Swanson. Recall that $\chi(P)$ is $1$ if the proposition $P$ is true, and $0$ if the proposition $P$ is false. For any $T \subseteq \{1,\ldots,n\}$, define the \textit{$\beta$-sequence} $\beta(T) = (\beta_1(T), \ldots, \beta_n(T))$ recursively by the initial condition $\beta_1(T) = \chi(1 \not\in T)$ and for $2 \leq  i \leq n$, 
\[
    \beta_i(T) =\beta_{i-1}(T) + \chi(i \not\in T) + \chi(i-1 \not\in T).
\]
\begin{definition}[\!\!\cite{SaganSwanson2024}]
    The type $B$ super-Artin set is \[B_{\mathfrak{B}_n}^{(1,1)} := \{x^\beta \theta_T \, | \, T \subseteq \{1,\ldots,n\} \text{ and } \beta \leq \beta(T) \text{ componentwise}\}.\]
\end{definition}

\begin{conjecture}[\!\!\cite{SaganSwanson2024}]
    The type $B$ super-Artin set $B_{\mathfrak{B}_n}^{(1,1)}$ is a basis for $R_{\mathfrak{B}_n}^{(1,1)}$.
\end{conjecture}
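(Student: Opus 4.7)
My plan is to mirror the strategy of Angarone et al.\ \cite{Angarone2024}, who proved the type $A$ analogue of this conjecture, and adapt each step to the hyperoctahedral setting. There are three components: (1) compute the graded cardinality of $B_{B_n}^{(1,1)}$ and match it with the Sagan--Swanson conjectural Hilbert series; (2) prove that $B_{B_n}^{(1,1)}$ spans $R_{B_n}^{(1,1)}$; (3) combine the two, together with a lower bound on $\dim R_{B_n}^{(1,1)}$, to conclude the basis property.

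For the first step, I would induct on $n$ by splitting the generating function $\sum_{x^\beta\theta_T \in B_{B_n}^{(1,1)}} q^{|\beta|}u^{n-|T|}$ according to whether $n \in T$ and, separately, whether $n-1 \in T$. The definition $\beta_i(T)=\beta_{i-1}(T)+\chi(i\notin T)+\chi(i-1\notin T)$ is designed so that removing the last index contributes a factor cleanly matching the recurrence $\Stir^B_q(n,k)=[2k+1]_q\Stir^B_q(n-1,k)+\Stir^B_q(n-1,k-1)$, at least after summing the two cases on $n-1$. If this bookkeeping works at the refined $(q,u)$ level, it matches the conjectured Hilbert series $\sum_{k=1}^n u^{n-k}[2k]_q!!\Stir^B_q(n,k)$ without any further input.

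The heart of the proof, and the expected main obstacle, is the spanning argument. Given a monomial $x^\gamma\theta_T$ with $\gamma_i>\beta_i(T)$ for some $i$, I would construct a reduction modulo the $B_n$-invariant ideal expressing it as a $\C$-linear combination of super-Artin monomials that are strictly smaller in a carefully chosen term order. In type $A$ the relevant straightening relations come from the power sums together with the fermionic $S_n$-invariants $\sum_j x_j^{k}\theta_j$; in type $B$ these are replaced by $p_k(x_1^2,\ldots,x_n^2)$ and $\sum_j x_j^{2k-1}\theta_j$, which force even-degree cancellations on the bosonic side. The obstacle is that $\beta_i(T)$ depends on two consecutive indicator variables, so any reduction rule must simultaneously control the status of both $i$ and $i-1$ in $T$. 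This interlocking, combined with the parity restriction from the type $B$ invariants, makes the combinatorics of the straightening procedure strictly more delicate than its type $A$ counterpart, and one will likely need to iterate several straightening moves in a coordinated way to decrease an explicit statistic (such as $\sum_i \max(0, \gamma_i - \beta_i(T))$) at each step.

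Once spanning is in place, the cardinality match reduces the basis claim to establishing $\dim R_{B_n}^{(1,1)} \geq |B_{B_n}^{(1,1)}|$. One natural route is to construct $R_{B_n}^{(1,1)}$ as an explicit $B_n$-module in the style of Rhoades--Wilson \cite{RhoadesWilson2023}, matching the conjectured dimension directly. Alternatively, one can supply a dual family of differential operators indexed by $B_{B_n}^{(1,1)}$ whose pairing matrix with the super-Artin set is unitriangular in the term order used for straightening, yielding linear independence and hence the basis property immediately.
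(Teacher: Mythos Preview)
The statement you are addressing is recorded in the paper as a \emph{conjecture} (attributed to Sagan--Swanson), not as a theorem; the paper provides no proof and treats it as open. There is therefore nothing in the paper to compare your proposal against.

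As for the proposal itself, it is a strategy outline rather than a proof. You explicitly identify the spanning argument as ``the expected main obstacle'' and describe why the type $B$ straightening is more delicate than in type $A$ (the dependence of $\beta_i(T)$ on two consecutive indicators, the parity constraints from the invariants $p_k(x_1^2,\ldots,x_n^2)$ and $\sum_j x_j^{2k-1}\theta_j$), but you do not actually carry out a reduction that provably decreases your proposed statistic $\sum_i \max(0,\gamma_i-\beta_i(T))$. This is precisely the step that, to date, no one has completed; the type $A$ argument of Angarone et al.\ relies on a specific transfer map and term order whose type $B$ analogue is not known to exist. Likewise, your step (3) offers two alternatives for the lower bound, but the Rhoades--Wilson style construction has not been carried out for $B_n$, and the unitriangular pairing you mention would itself require identifying the right differential operators and proving unitriangularity, neither of which is supplied. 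In short, the proposal accurately locates where the difficulty lies but does not resolve it; it remains a plan for attacking an open conjecture rather than a proof.
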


We generalize the $\beta$-sequence as follows. For any $T,S \subseteq \{1,\ldots, n\}$, define the \textit{generalized $\beta$-sequence} by $\beta(T,S) = (\beta_1(T,S), \ldots, \beta_n(T,S))$ recursively by the initial condition $\beta_1(T,S) = -1 + \chi(1 \not\in T) + \chi(1 \not\in S)$ and for $2 \leq  i \leq n$, 
\[
    \beta_i(T,S) = \beta_{i-1}(T,S) -2 +\chi(i \not\in T) + \chi(i-1 \not\in T) + \chi(i \not\in S) + \chi(i-1 \not\in S).
\]

\begin{definition}\label{def:B_{B_n}^{(1,2)}}
     We let \[B_{\mathfrak{B}_n}^{(1,2)} := \{x^\beta \theta_T \xi_S \, | \, \theta_T \xi_S \in B_{\mathfrak{B}_n}^{(0,2)} \text{ and } 0 \leq \beta_i \leq \beta_i(T,S) \text{ for all } i \in \{1,\ldots,n\}\}.\]
\end{definition}

For example, consider $B_{\mathfrak{B}_6}^{(1,2)}$. To construct this set, we start with $B_{\mathfrak{B}_6}^{(0,2)}$. One such element in $B_{\mathfrak{B}_6}^{(0,2)}$ is $\theta_{\{3,4\}}\xi_{\{3,6\}} = \theta_3\theta_4\xi_3\xi_6$. We compute the generalized $\beta$-sequence $\beta(\{3,4\},\{3,6\}) = (1,3,3,2,3,4)$. Then to construct the elements in $B_{\mathfrak{B}_6}^{(1,2)}$ associated to $\theta_3\theta_4\xi_3\xi_6$, multiply by $x^\beta$, where $\beta$ satisfies $0 \leq \beta_i \leq \beta_i(\{3,4\},\{3,6\})$. See Figure~\ref{fig:type_B_example} for one of the $2\cdot4\cdot4\cdot3\cdot4\cdot5$, elements associated to the modified Motzkin path, and observe that in type $B$, the \textit{maximal staircase} given by $\beta(T,S)$ in general does not fit underneath the modified Motzkin path.

\begin{figure}[ht]
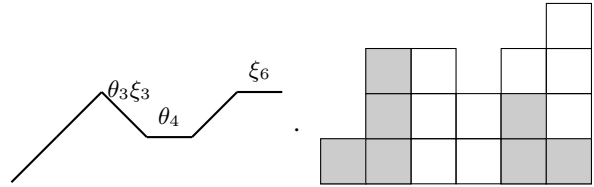

    \centering
\motzkin{up,up,down,flat-theta,up,flat-xi}{}$\cdot$ \raisebox{0.8em}{\motzkinpath{skip,skip,skip,skip,skip,skip}{1,3,3,2,3,4}{1,3,0,0,2,1}{}}
    \caption{An element $x_1x_2^3x_5^2x_6\theta_3\theta_4\xi_3\xi_6$ in $B_{\mathfrak{B}_6}^{(1,2)}$, represented by a type $B$ modified Motzkin path and partially filled-in staircase.}
    \label{fig:type_B_example}
\end{figure}

\begin{conjecture}\label{conj:R_{B_n}^{(1,2)}}
    The set $B_{\mathfrak{B}_n}^{(1,2)}$ is a basis for $R_{\mathfrak{B}_n}^{(1,2)}$.
\end{conjecture}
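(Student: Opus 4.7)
The plan is to adapt the template that worked for the proven cases $R_n^{(0,2)}$ (Kim--Rhoades) and $R_n^{(1,1)}$ (Angarone et al.), combined with the type-$B$ machinery that Kim--Rhoades used to treat $R_{B_n}^{(0,2)}$. Two things must be established: that $B_{B_n}^{(1,2)}$ spans $R_{B_n}^{(1,2)}$, and that it is linearly independent. By Theorem~\ref{thm:typeB_enumeration} the cardinality $|B_{B_n}^{(1,2)}| = 4^n n!$ matches the dimension conjectured in Conjecture~\ref{conj:dimR_{B_n}^{(1,2)}}, so a proof of both spanning and linear independence would simultaneously settle that dimension conjecture. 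Moreover, since $B_{B_n}^{(1,2)}$ specializes to $B_{B_n}^{(0,2)}$ at $x_i=0$ and (conditionally) to $B_{B_n}^{(1,1)}$ at $\xi_i=0$, any proof strategy should respect these two specializations as consistency checks.

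For spanning, I would work with an explicit generating set of the ideal $\langle \C[\bm{x}_n, \bm{\theta}_n, \bm{\xi}_n]_+^{B_n}\rangle$ coming from polarizations of the fundamental $B_n$-invariants: the even power sums $\sum_i x_i^{2k}$, mixed invariants such as $\sum_i \theta_i x_i^{2k-1}$, $\sum_i \xi_i x_i^{2k-1}$, and $\sum_i \theta_i \xi_i x_i^{2k}$, together with their $\theta/\xi$-polarizations. The aim is to derive a family of straightening identities which show that every monomial $x^\alpha \theta_T \xi_S$ violating either the type-$B$ Motzkin condition on $(T,S)$ or the componentwise bound $\alpha \leq \beta(T,S)$ lies, modulo the ideal, in the span of monomials that are smaller in an appropriate monomial order. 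A natural choice of order refines a lex order on $(T,S,\alpha)$ by the height profile of the associated type-$B$ Motzkin path; the desired straightening should reduce both $x_i^{\beta_i(T,S)+1}\theta_T\xi_S$ and Motzkin-violating $(T,S)$-configurations to earlier monomials in a manner compatible with both anticommuting sets of variables. Linear independence could then either be extracted from the monomial-order analysis used for spanning (giving a Gr\"obner-basis argument) or from orbit harmonics, by identifying a $B_n$-stable finite subset $Z \subset \R^n$ whose Macaulay inverse system realizes $R_{B_n}^{(1,2)}$ and on which the proposed basis can be directly checked to be independent, in the spirit of the Rhoades--Wilson proof for $R_n^{(1,1)}$.

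The main obstacle is that the type-$A$ analogue (Conjecture~\ref{conj:R_n^{(1,2)}}) is itself open, verified only for $n \leq 4$, so there is no directly applicable template yet. Beyond inheriting that difficulty, the type-$B$ setting introduces two further complications. First, the generalized $\beta$-sequence satisfies $\beta_1(T,S) \geq 0$ already at $i=1$, in contrast to $\alpha_1(T,S) = 0$ in type $A$, reflecting that $x_1$ is not forced to vanish; the straightening rules must therefore handle a nontrivial boundary at $i=1$. Second, the odd-degree invariants in $x$ that are unique to type $B$ couple the $\theta$ and $\xi$ variables in ways which have no analogue in type $A$, so the reductions cannot be obtained by merely polarizing the type-$A$ straightenings. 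I expect the cleanest route is to first settle the type-$A$ conjecture, then identify how the additional type-$B$ generators perturb the resulting straightening, and finally exploit the specializations to $B_{B_n}^{(0,2)}$ and $B_{B_n}^{(1,1)}$ as base cases for an induction.
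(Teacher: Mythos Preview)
The statement you are addressing is a \emph{conjecture} in the paper, not a theorem; the paper offers no proof of it whatsoever. Conjecture~\ref{conj:R_{B_n}^{(1,2)}} is stated as an open problem, supported only by the cardinality count in Theorem~\ref{thm:typeB_enumeration} and by its compatibility with the specializations to $B_{B_n}^{(0,2)}$ and $B_{B_n}^{(1,1)}$. There is therefore nothing in the paper to compare your proposal against.

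Your write-up is not a proof either, and to your credit you say so explicitly: you correctly identify that even the type-$A$ analogue (Conjecture~\ref{conj:R_n^{(1,2)}}) is open, and you describe your text as a plan rather than an argument. As a research outline it is reasonable---the suggestion to seek straightening relations from polarized $B_n$-invariants and to pursue either a Gr\"obner or an orbit-harmonics route mirrors the successful strategies for $R_n^{(0,2)}$ and $R_n^{(1,1)}$---but none of the key steps (an explicit term order, the actual straightening identities, a candidate point locus $Z$) is carried out. One factual slip: you refer to ``odd-degree invariants in $x$ that are unique to type $B$,'' but the fundamental $B_n$-invariants in the commuting variables are the \emph{even} power sums $p_{2k}$; it is the polarized invariants involving a single $\theta_i$ or $\xi_i$ that acquire odd $x$-degree, and these already have type-$A$ analogues. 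In short, your proposal is an honest assessment of an open problem, not a proof, and the paper makes no claim to have one.
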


Define
\[ \stair_q^B(\pi) := \prod_{k \in \beta(T(\pi),S(\pi))} [k+1]_q,\]
where $T(\pi)$ and $S(\pi)$ are determined by which elements in $\{1,\ldots,n\}$ appear as indices for $\theta_i$ and $\xi_i$ respectively in the weight of the modified Motzkin path $\pi$. As before, $\deg_\theta(\pi) = |T(\pi)|$ and $\deg_\xi(\pi) = |S(\pi)|$.
The following result follows from a similar argument as that used to show Proposition~\ref{prop:Hilb(R_n^{(1,2)})}.

\begin{proposition}\label{prop:Hilb(R_{B_n}^{(1,2)})}
    Assuming Conjecture~\ref{conj:R_{B_n}^{(1,2)}}, it follows that the Hilbert series of $R_{\mathfrak{B}_n}^{(1,2)}$ is 
    \[ \sum_{\pi \in \Pi_{\geq 0}(n)} u^{\deg_\theta(\pi)} v^{\deg_\xi(\pi)} \stair_q^B(\pi).\]
\end{proposition}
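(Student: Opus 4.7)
The plan is to mirror the argument used for Proposition~\ref{prop:Hilb(R_n^{(1,2)})} in type $A$, partitioning $B_{B_n}^{(1,2)}$ according to the underlying type $B$ modified Motzkin path. Assuming Conjecture~\ref{conj:R_{B_n}^{(1,2)}}, the Hilbert series of $R_{B_n}^{(1,2)}$ equals the generating function
\[
\sum_{b \in B_{B_n}^{(1,2)}} u^{\deg_\theta(b)} v^{\deg_\xi(b)} q^{\deg_x(b)},
\]
so it suffices to evaluate this sum combinatorially.

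First, I would note that by Definition~\ref{def:B_{B_n}^{(1,2)}}, every basis element has the form $b = x^\beta \theta_T \xi_S$ where $\theta_T \xi_S \in B_{B_n}^{(0,2)}$ and $0 \leq \beta \leq \beta(T,S)$ componentwise. The fermionic part $\theta_T\xi_S$ is precisely the weight $\wt(\pi)$ of a unique path $\pi \in \Pi_{\geq 0}(n)$, and by construction of the decorations the exponents $\deg_\theta(b)$ and $\deg_\xi(b)$ agree with $\deg_\theta(\pi)$ and $\deg_\xi(\pi)$, respectively. This partitions $B_{B_n}^{(1,2)}$ into fibers indexed by $\pi \in \Pi_{\geq 0}(n)$, with each fiber contributing $u^{\deg_\theta(\pi)} v^{\deg_\xi(\pi)}$ times the $q$-enumeration of its $x$-monomials.

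Next, for a fixed $\pi$ (equivalently, fixed $T,S$), the allowed exponent vectors are exactly the lattice points $\beta$ with $0 \leq \beta_i \leq \beta_i(T,S)$ for each $i \in \{1,\ldots,n\}$. Since the choices in distinct coordinates are independent, the $q$-enumeration factors as
\[
\sum_{0 \leq \beta \leq \beta(T,S)} q^{|\beta|} = \prod_{i=1}^n \bigl(1 + q + q^2 + \cdots + q^{\beta_i(T,S)}\bigr) = \prod_{i=1}^n [\beta_i(T,S)+1]_q,
\]
which is precisely $\stair_q^B(\pi)$. Summing over $\pi \in \Pi_{\geq 0}(n)$ yields the claimed formula.

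There is no substantial obstacle here; the argument is a direct factorization of a product of geometric series, exactly as in the type $A$ case. The only conceptual point worth flagging is that in type $B$ the paths in $\Pi_{\geq 0}(n)$ need not begin with an up-step, so the initial value $\beta_1(T,S) = -1 + \chi(1 \not\in T) + \chi(1 \not\in S)$ can be positive, zero, or (if $1 \in T \cap S$) even negative. In the last case the empty product condition $0 \leq \beta_1 \leq -1$ is vacuous, but this possibility is already excluded by the hypothesis $\theta_T \xi_S \in B_{B_n}^{(0,2)}$, since then $\pi$ would have to begin with a down-step and dip below $y=0$. Hence $\beta_i(T,S) \geq 0$ for every admissible $\pi$ and every $i$, and the factorization above is valid throughout, completing the proof.
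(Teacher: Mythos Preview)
Your proof is correct and follows exactly the argument the paper points to (``a similar argument as that used to show Proposition~\ref{prop:Hilb(R_n^{(1,2)})}''). The only loose end is your final sentence, where you assert $\beta_i(T,S)\ge 0$ for \emph{every} $i$ after only checking $i=1$; this is indeed true, since a short induction gives $\beta_i(T,S)=h_{i-1}+h_i$ with $h_j\ge 0$ the height of $\pi$ after step $j$, so the factorization is valid throughout as claimed.
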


Based on computations for $n \leq 5$, we make the following conjecture on the dimension of $R_{\mathfrak{B}_n}^{(1,2)}$, which to our knowledge, has not previously appeared. 
\begin{conjecture}\label{conj:dimR_{B_n}^{(1,2)}}
    The dimension of $R_{\mathfrak{B}_n}^{(1,2)}$ is $4^{n}n!$.
\end{conjecture}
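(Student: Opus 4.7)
The plan is to establish Conjecture~\ref{conj:dimR_{B_n}^{(1,2)}} by a two-sided bound: prove $\dim R_{B_n}^{(1,2)} \leq 4^n n!$ via a spanning set of that size, and $\dim R_{B_n}^{(1,2)} \geq 4^n n!$ via an explicit quotient or representation of that dimension. Since the two bounds meet, equality follows.

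For the upper bound, the natural route is to prove the weaker half of Conjecture~\ref{conj:R_{B_n}^{(1,2)}}, namely that $B_{B_n}^{(1,2)}$ spans $R_{B_n}^{(1,2)}$; combined with Theorem~\ref{thm:typeB_enumeration}, which gives $|B_{B_n}^{(1,2)}| = 4^n n!$, this yields the desired inequality. Spanning would be proved by a straightening algorithm: for each monomial $x^\alpha \theta_T \xi_S$ that violates one of the conditions in Definition~\ref{def:B_{B_n}^{(1,2)}}, construct an explicit reduction modulo the $B_n$-invariant ideal. Two kinds of violations must be handled: the fermionic monomial $\theta_T \xi_S$ may fail to lie in $B_{B_n}^{(0,2)}$, which is absorbed by Kim--Rhoades-style type $B$ relations producing the modified Motzkin basis; or the exponent vector may exceed $\beta(T,S)$ componentwise, which is absorbed using the type $B$ bosonic invariants (symmetric polynomials in the $x_i^2$) together with their polarizations into the $\theta_i$ and $\xi_i$. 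The recursion defining $\beta(T,S)$, with increments $\chi(i\not\in T)+\chi(i{-}1\not\in T)+\chi(i\not\in S)+\chi(i{-}1\not\in S)-2$, should correspond precisely to the maximal $x_i$ exponent that survives after reduction.

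For the lower bound, two routes are worth attempting. The first is to propose a type $B$ Frobenius series $F_n^B$ for $R_{B_n}^{(1,2)}$, patterned on Conjecture~\ref{conj:frob}, as a sum over $B_{B_n}^{(1,2)}$ with weights $q^{\deg_x(b)} u^{\deg_\theta(b)} v^{\deg_\xi(b)}$ and an appropriately defined type $B$ quasisymmetric factor, then verify $\langle F_n^B, h_1^n\rangle|_{q=u=v=1}=4^n n!$ and construct a $B_n$-equivariant surjection from $R_{B_n}^{(1,2)}$ onto a model representation with character $F_n^B$; combined with the upper bound, the character inequality on each isotypic component would be forced into equality. The second route is a direct linear-independence argument via a Gr\"obner basis for the ideal with respect to a compatible monomial order on $\C[\bm{x}_n,\bm{\theta}_n,\bm{\xi}_n]$; showing that the $4^n n!$ monomials in $B_{B_n}^{(1,2)}$ remain $\C$-linearly independent in the quotient would simultaneously settle Conjecture~\ref{conj:R_{B_n}^{(1,2)}}.

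The main obstacle is the spanning step. Unlike type A, the $B_n$-invariant ideal is generated by symmetric polynomials in the squared variables $x_i^2$ together with sign-twisted fermionic invariants, which interact less transparently with the generalized $\beta$-sequence. The recursion for $\beta(T,S)$ depends on the joint membership of both $i$ and $i{-}1$ in $T$ and $S$, so matching it to explicit straightening relations requires careful case analysis near the boundary ($i=1$), at up/down transitions of the underlying modified Motzkin path, and on paths touching $y=0$ (which exist in $\Pi(n)_{\geq 0}$ but not in $\Pi(n)_{>0}$, and correspond to genuinely new type $B$ phenomena). A reasonable intermediate milestone is to verify both the dimension conjecture and the basis conjecture computationally for small $n$, extract the pattern of straightening relations that appears, and prove that it propagates inductively in $n$.
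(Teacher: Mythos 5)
This statement is a conjecture in the paper, and the paper does not prove it: the author states explicitly that it is based on computations and, to their knowledge, new. What the paper actually proves is Theorem~\ref{thm:typeB_enumeration}, namely that the proposed set $B_{B_n}^{(1,2)}$ has cardinality $4^n n!$; the passage from that enumeration to the dimension of the quotient ring is precisely what remains open. Your proposal correctly identifies the logical skeleton (a spanning set of size $4^n n!$ gives the upper bound, linear independence gives the lower bound), but it supplies no mathematical content for either half. No straightening relation is written down, no monomial order or Gr\"obner basis is exhibited, no model representation with the right character is constructed, and the ``type $B$ Frobenius series'' you invoke is never defined. Each of these is the hard step, not a routine verification: even in type $A$, where far more machinery is available, the analogous basis conjecture (Conjecture~\ref{conj:R_n^{(1,2)}}) is open and verified only for $n \leq 4$, and in type $B$ even the simpler super-Artin set $B_{B_n}^{(1,1)}$ is still only \emph{conjecturally} a basis --- there is no type $B$ analogue of the Angarone et al.\ proof. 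Consequently your assertion that the recursion defining $\beta(T,S)$ ``should correspond precisely to the maximal $x_i$ exponent that survives after reduction'' is a restatement of Conjecture~\ref{conj:R_{B_n}^{(1,2)}}, not an argument for it.

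In short, what you have written is a research plan rather than a proof, and its closing sentence concedes as much by proposing computational verification for small $n$ as a milestone --- which is exactly the evidence the paper already cites when stating the conjecture. If you want to contribute something beyond the paper, the most tractable concrete target is the spanning half: an explicit reduction algorithm modulo the $B_n$-invariant ideal (generated by symmetric polynomials in $x_1^2,\ldots,x_n^2$ together with their fermionic polarizations) that rewrites any monomial $x^\alpha\theta_T\xi_S$ violating the conditions of Definition~\ref{def:B_{B_n}^{(1,2)}} in terms of admissible ones. That alone, combined with Theorem~\ref{thm:typeB_enumeration}, would yield the inequality $\dim R_{B_n}^{(1,2)} \leq 4^n n!$, which would already be a new theorem; the reverse inequality would still require a genuinely independent argument.
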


\begin{remark}
Note that the conjectural dimension of $R_n^{(1,2)}$ and $R_{\mathfrak{B}_n}^{(1,2)}$ are both given by $2^r|W|$, where $r$ is the rank of $W$. 
It would be interesting to know for which finite Coxeter groups this holds, and for which it fails. Type-independent work in the case of one set of bosonic and one set of fermionic variables has been advanced by Swanson and Wallach \cite{SwansonWallach1, SwansonWallach2}.
\end{remark}

\begin{theorem}\label{thm:typeB_enumeration}
    The cardinality of $B_{\mathfrak{B}_n}^{(1,2)}$ is $4^n n!$.
\end{theorem}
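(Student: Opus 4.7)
The plan is to adapt the inductive enumeration strategy from the proof of Theorem~\ref{thm:cardinality}, now tracking basis elements by the final height of the underlying type~$B$ modified Motzkin path. Let $p_B(n,r)$ denote the number of elements of $B_{B_n}^{(1,2)}$ whose associated path $\pi \in \Pi(n)_{\geq 0}$ ends at height $r$, with boundary conditions $p_B(0,0) = 1$ and $p_B(n,r) = 0$ for $r < 0$ or $r > n$.

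The first step is to find a closed form for $\beta_n(T,S)$ that depends only on the current height and on the type of the $n$-th step, rather than on the whole memory of $T$ and $S$. Writing $a_i := \chi(i \notin T)$ and $b_i := \chi(i \notin S)$, I will prove by induction on $n$, directly from the defining recurrence for the generalized $\beta$-sequence, that
\[\beta_n(T,S) = 2y_n + 1 - a_n - b_n,\]
where $y_n$ is the height of $\pi$ after step $s_n$. The inductive step will cleanly cancel the dependence on $a_{n-1}, b_{n-1}$ by means of the identity $y_n - y_{n-1} = a_n + b_n - 1$, which holds by inspection for each of the four step types. In particular, the number of admissible $x_n$-exponents $\beta_n(T,S)+1$ equals $2r$ if $s_n$ is an up-step ending at height $r$, equals $2r+1$ for either of the two horizontal step types, and equals $2r+2$ for a down-step; it is also immediate from the formula that $\beta_n(T,S) \geq 0$ on every valid path.

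Classifying the last step of $\pi$ then yields the recurrence
\[p_B(n,r) = 2r\,p_B(n-1,r-1) + (4r+2)\,p_B(n-1,r) + (2r+2)\,p_B(n-1,r+1),\]
where the factor $4r+2$ accounts for both horizontal-step decorations. The main formal task will be to identify and verify the correct closed form; numerical evidence from $n \leq 3$ (e.g.\ $(p_B(3,r))_r = (48,144,144,48)$) suggests $p_B(n,r) = 2^n n!\binom{n}{r}$. I will establish this by induction on $n$: after substituting the conjectured formula into the recurrence and dividing both sides by $2^{n-1}(n-1)!$, the claim reduces to the identity
\[2r\binom{n-1}{r-1} + (4r+2)\binom{n-1}{r} + (2r+2)\binom{n-1}{r+1} = 2n\binom{n}{r},\]
which follows from Pascal's rule together with the absorption identity $(r+1)\binom{n-1}{r+1} = (n-1-r)\binom{n-1}{r}$.

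Summing over $r$ then gives $|B_{B_n}^{(1,2)}| = \sum_{r=0}^{n} 2^n n!\binom{n}{r} = 4^n n!$, as desired. I do not expect any serious obstacle: the most delicate point is the bookkeeping needed to establish the closed form for $\beta_n(T,S)$ (so that the recurrence for $p_B(n,r)$ does not require remembering the type of step $s_{n-1}$), but once that is in place the argument reduces to standard binomial manipulation, exactly paralleling the structure of the type~$A$ proof.
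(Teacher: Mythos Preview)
Your argument is correct and is, in fact, cleaner than the paper's own proof. The key innovation is your closed form $\beta_n(T,S)=2y_n+1-a_n-b_n$: once you observe this, the number of admissible $x_n$-exponents depends only on the current path height and the type of the $n$-th step, so a single recurrence in the path-height variable suffices, exactly as in the type~$A$ proof. The paper does not isolate this formula; instead it parametrizes elements by the value $r=\beta_n(T,S)$ itself (the ``maximal staircase height''), and since that quantity depends on both $s_n$ and $s_{n-1}$, the author is forced to split $P_B(n,r)$ into three pieces $P_E,P_U,P_D$ according to the type of the last step, derive three coupled recurrences, and prove three separate closed forms before summing. Your approach avoids this case analysis entirely and yields the tidier formula $p_B(n,r)=2^n n!\binom{n}{r}$, making the parallel with $p(n,r)=n!\binom{n-1}{r}$ in type~$A$ transparent. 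The only point worth making explicit in your write-up is that the closed form for $\beta_i$ holds for every $i$, not just $i=n$, so that the nonnegativity condition $\beta_i(T,S)\geq 0$ is automatic along every valid path in $\Pi(n)_{\geq 0}$; you allude to this, but it deserves one sentence, since otherwise the factorization $\prod_i(\beta_i+1)$ into a product of independent choices would not be justified.
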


\begin{proof}
    Let $P_B(n,r)$ denote the subset of elements of $B_{\mathfrak{B}_n}^{(1,2)}$ which have maximal staircase ending at height $r$.\footnote{Unlike in the case of $B_n^{(1,2)}$, as in the proof of Theorem~\ref{thm:cardinality}, here the height of the modified Motzkin path does not in general correspond to the height of the maximal staircase.}

    Since in type $B$, both the current step and the previous step in $\pi$ affect the shape of the maximal staircase at the current position, it will be convenient to introduce the following notation. Let $P_E(n,r)$ denote the subset of $P_B(n,r)$ whose current step (that is, step $n$) of the corresponding path is a horizontal step. Let $P_U(n,r)$ denote the subset of $P_B(n,r)$ whose current step is an up-step. Let $P_D(n,r)$ denote the subset of $P_B(n,r)$ whose current step is a down-step. It follows that $P_B(n,r) = P_E(n,r) \sqcup P_U(n,r) \sqcup P_D(n,r)$. Also denote $p_B(n,r) := |P_B(n,r)|$, $p_E(n,r) := |P_E(n,r)|$, $p_U(n,r) := |P_U(n,r)|$, and $p_D(n,r) := |P_D(n,r)|$. Thus $p_B(n,r) = p_E(n,r) + p_U(n,r) + p_D(n,r)$.
    
    Consider $\pi \in P_E(n,r)$: since step $n$ is a horizontal step, it does not affect the height of the maximal staircase. But whether step $n-1$ in $\pi$ is either a horizontal step, an up-step, or a down-step does affect the height of the maximal staircase: specifically, we get $p_E(n-1,r) + p_U(n-1,r-1)+p_D(n-1,r+1)$ as the possibilities, and then we multiply by $2(r+1)$ to account for the two choices of weight on $n$, either $\theta_n$ or $\xi_n$, along with the $r+1$ choices for the exponent of $x_n$. This implies that
    \[ p_E(n,r) = 2(r+1)\left(p_E(n-1,r) + p_U(n-1,r-1)+p_D(n-1,r+1)\right).\]
    Similarly, we find that 
    \[ p_U(n,r) = (r+1)\left(p_E(n-1,r-1) + p_U(n-1,r-2)+p_D(n-1,r)\right),\]
    and
    \[ p_D(n,r) = (r+1)\left(p_E(n-1,r+1) + p_U(n-1,r)+p_D(n-1,r+2)\right).\]
    The initial conditions are given by:
    $p_E(0,r)=\delta_{0,r}$, $p_U(0,r)=0$, and $p_D(0,r)=0$.
    It follows that $p_E(n,2r+1)=0$, $p_U(n,2r)=0$, and $p_D(n,2r)=0$ for any nonnegative integers $r$.
    
    Next, we claim that for $n \geq 1$,
    \[p_E(n,2r) = (n-1)!2^n\binom{n-1}{r}(2r+1),\]
    \[p_U(n,2r+1) = (n-1)!2^n\binom{n-1}{r}(r+1),\]
    \[p_D(n,2r+1) = (n-1)!2^n\binom{n-1}{r}(n-r-1).\]
    We show these by induction on $n$. For the base case of $n=1$, 
    \[p_E(1,2r)= \begin{cases}
        2&\text{ if } r=0,\\
        0&\text{ otherwise, }
    \end{cases}\]
    since there are two elements in $B_{\mathfrak{B}_1}^{(1,2)}$ with maximal staircase of height $0$: $\theta_1$ and $\xi_1$, and none with maximal staircase of height greater than $0$. Similarly, we check that
    \[p_U(1,2r+1)= \begin{cases}
        2&\text{ if } r=0,\\
        0&\text{ otherwise, }
    \end{cases}\]
    since there are two elements in $B_{\mathfrak{B}_1}^{(1,2)}$, $1$ and $x_1$, each with maximal staircase of height $1$, and none with maximal staircase of height greater than $1$. 
    Finally, $p_U(1,2r+1)= 0$, as there are no paths which begin with a down-step.

    Now assume that the claim holds for fixed $n \geq 1$. Using the previously established recurrence relations, we have that
    \begin{align*}
        &p_E (n+1,2r)\\ 
        &\quad= 2(2r+1)\left[(n-1)!2^n\left(\binom{n-1}{r}(2r+1) + \binom{n-1}{r-1}r + \binom{n-1}{r}(n-r-1)\right)\right]\\
        &\quad= (2r+1)(n-1)!2^{n+1}\left[n\binom{n-1}{r} + r\left(\binom{n-1}{r}+\binom{n-1}{r-1}\right)\right]\\
        &\quad= n!2^{n+1}\binom{n}{r}(2r+1).
    \end{align*}
    Similarly, we have 
    \begin{align*}
        &p_U (n+1,2r+1)\\ 
        &\quad= 2(r+1)\left[(n-1)!2^n\left(\binom{n-1}{r}(2r+1) + \binom{n-1}{r-1}r + \binom{n-1}{r}(n-r-1)\right)\right]\\
        &\quad= (r+1)(n-1)!2^{n+1}\left[n\binom{n-1}{r} + r\left(\binom{n-1}{r}+\binom{n-1}{r-1}\right)\right]\\
        &\quad= n!2^{n+1}\binom{n}{r}(r+1),
    \end{align*}
    and 
    \begin{align*}
        &p_D (n+1,2r+1)\\
        &\quad= 2(r+1)\\
        &\quad\quad\cdot\left[(n-1)!2^n\left(\binom{n-1}{r+1}(2r+3) + \binom{n-1}{r}(r+1) + \binom{n-1}{r+1}(n-r-2)\right)\right]\\
        &\quad= (r+1)(n-1)!2^{n+1}\left[n\binom{n-1}{r+1} + (r+1)\left(\binom{n-1}{r+1}+\binom{n-1}{r}\right)\right]\\
        &\quad= n!2^{n+1}\binom{n}{r+1}(r+1)\\
        &\quad= n!2^{n+1}\binom{n}{r}(n-r).
    \end{align*}
    This completes our induction.

    Considering $p_B(n,s)$, depending on the parity of $s$, we get that
    \[p_B(n,2r) = p_E (n,2r) = (n-1)!2^n\binom{n-1}{r}(2r+1),\]
    and 
    \[p_B(n,2r+1) = p_U(n,2r+1) + p_D(n,2r+1) = n!2^n\binom{n-1}{r}.\]

    Putting everything together, we have that
    \begin{align*}
        \sum_{s=0}^{2n+1}p_B(n,s)&= \sum_{r=0}^{n}p_B(n,2r) + \sum_{r=0}^{n}p_B(n,2r+1)\\
        &=\sum_{r=0}^{n}(n-1)!2^n\binom{n-1}{r}(2r+1) + \sum_{r=0}^{n}n!2^n\binom{n-1}{r}\\
        &= (n-1)!2^n\sum_{r=0}^{n}\binom{n-1}{r}(2r+1 +n)\\
        &= 4^n n!,
    \end{align*}
    where the last line follows from the identity $n2^n = \sum_{r=0}^{n}\binom{n-1}{r}(2r+1 +n)$, which can be shown by induction.
\end{proof}


\section*{Acknowledgements}

The author would like to thank Fran\c{c}ois Bergeron, Sylvie Corteel, Nicolle Gonz\'alez, Sean Griffin, Mark Haiman, Brendon Rhoades, Josh Swanson, and Mike Zabrocki for invaluable conversations and/or feedback on a draft of this paper, and the anonymous referees for feedback which improved the exposition. The author was supported by the National Science Foundation Graduate Research Fellowship DGE-2146752.

\bibliographystyle{amsplain}
\bibliography{12paper/sample_2025}

\appendix
\section{The bases \texorpdfstring{$B_n^{(1,2)}$ and $\SW(1^n)$ for $n = 1,2,3$}{Bn12 for n=1,2,3}}\label{app:A}

This appendix consists of tables which show the bijection between segmented permutations $\sigma \in \SW(1^n)$ and basis elements $b \in B_n^{(1,2)}$. In each segmented permutation, the thin indices are underlined.

\begin{table}[ht]
\resizebox{\columnwidth}{!}{%
\centering
\begin{tabular}{|c|c|c|c|c|c|c|c|}
\hline
\makecell{\textbf{Segmented} \\ \textbf{Permutation $\sigma$}} & \makecell{\textbf{Basis} \\ \textbf{Element $b$}} & \makecell{\textbf{Path} \\ \textbf{Model}} & \makecell{$k(\sigma) = $\\$ \deg_\theta(b)$} & \makecell{$\ell(\sigma) = $ \\ $ \deg_\xi(b)$} & \makecell{$\sminv(\sigma) = $ \\ $\deg_x(b)$} & \textbf{$\sdinv(\sigma)$}\tablefootnote{We do not define $\sdinv$ in this paper because we do not otherwise use it. It is equidistributed with $\sminv$ and is defined in \cite[Section 4.3]{IraciNadeauVandenWyngaerd2024}.}  & \makecell{$\Split(\sigma) = $ \\ $\Asc(b)$} \\
\hline
$1$ & $1$ & \motzkinpathnolabel{up}{0}{0}{} & 0 & 0 & 0 & 0 & $\varnothing$ \\
\hline
\end{tabular}
}
\caption{Conversion between $B_1^{(1,2)}$ and $\SW(1)$ with statistics.}
\label{tab:B_1^{(1,2)}}
\end{table}

\begin{table}[ht]
\resizebox{\columnwidth}{!}{%
\centering
\begin{tabular}{|c|c|c|c|c|c|c|c|}
\hline
\makecell{\textbf{Segmented} \\ \textbf{Permutation $\sigma$}} & \makecell{\textbf{Basis} \\ \textbf{Element $b$}} & \makecell{\textbf{Path} \\ \textbf{Model}} & \makecell{$k(\sigma) = $\\$ \deg_\theta(b)$} & \makecell{$\ell(\sigma) = $ \\ $ \deg_\xi(b)$} & \makecell{$\sminv(\sigma) = $ \\ $\deg_x(b)$} & \textbf{$\sdinv(\sigma)$} & \makecell{$\Split(\sigma) = $ \\ $\Asc(b)$} \\
\hline
$1\underline{2}$ & $\theta_2$ & \motzkinpathnolabel{up,flat-theta}{0,0}{0,0}{} & 1 & 0 & 0 & 0 & $\{1\}$ \\
\hline
$21$ & $\xi_2$ & \motzkinpathnolabel{up,flat-xi}{0,0}{0,0}{} & 0 & 1 & 0 & 0 & $\{1\}$ \\
\hline
$1|2$ & $1$ & \motzkinpathnolabel{up,up}{0,1}{0,0}{} & 0 & 0 & 0 & 0 & $\varnothing$ \\
\hline
$2|1$ & $x_2$ & \motzkinpathnolabel{up,up}{0,1}{0,1}{} & 0 & 0 & 1 & 1 & $\{1\}$ \\
\hline
\end{tabular}
}
\caption{Conversion between $B_2^{(1,2)}$ and $\SW(1^2)$ with statistics.}
\label{tab:B_2^{(1,2)}}
\end{table}

\begin{table}[ht]
\resizebox{\columnwidth}{!}{%
\centering
\begin{tabular}{|c|c|c|c|c|c|c|c|}
\hline
\makecell{\textbf{Segmented} \\ \textbf{Permutation $\sigma$}} & \makecell{\textbf{Basis} \\ \textbf{Element $b$}} & \makecell{\textbf{Path} \\ \textbf{Model}} & \makecell{$k(\sigma) = $\\$ \deg_\theta(b)$} & \makecell{$\ell(\sigma) = $ \\ $ \deg_\xi(b)$} & \makecell{$\sminv(\sigma) = $ \\ $\deg_x(b)$} & \textbf{$\sdinv(\sigma)$} & \makecell{$\Split(\sigma) = $ \\ $\Asc(b)$} \\
\hline
$1\underline{2}\underline{3}$ & $\theta_2\theta_3$ & \motzkinpathnolabel{up,flat-theta,flat-theta}{0,0,0}{0,0,0}{} & 2 & 0 & 0 & 0 & $\{1,2\}$ \\
\hline
$1\underline{3}2$ & $\theta_3\xi_3$ & \motzkinpathnolabel{up,up,down}{0,1,0}{0,0,0}{} & 1 & 1 & 0 & 0 & $\{2\}$ \\
\hline
$21\underline{3}$ & $\xi_2\theta_3$ & \motzkinpathnolabel{up,flat-xi,flat-theta}{0,0,0}{0,0,0}{} & 1 & 1 & 0 & 0 & $\{1,2\}$ \\
\hline
$2\underline{3}1$ & $x_2\theta_3\xi_3$ & \motzkinpathnolabel{up,up,down}{0,1,0}{0,1,0}{} & 1 & 1 & 1 & 1 & $\{1,2\}$ \\
\hline
$31\underline{2}$ & $\theta_2\xi_3$ & \motzkinpathnolabel{up,flat-theta,flat-xi}{0,0,0}{0,0,0}{} & 1 & 1 & 0 & 0 & $\{1\}$ \\
\hline
$321$ & $\xi_2\xi_3$ & \motzkinpathnolabel{up,flat-xi,flat-xi}{0,0,0}{0,0,0}{} & 0 & 2 & 0 & 0 & $\{1,2\}$ \\
\hline
$1|2\underline{3}$ & $\theta_3$ & \motzkinpathnolabel{up,up,flat-theta}{0,1,1}{0,0,0}{} & 1 & 0 & 0 & 1 & $\{2\}$ \\
\hline
$1|32$ & $\xi_3$ & \motzkinpathnolabel{up,up,flat-xi}{0,1,1}{0,0,0}{} & 0 & 1 & 0 & 0 & $\{2\}$ \\
\hline
$2|1\underline{3}$ & $x_2\theta_3$ & \motzkinpathnolabel{up,up,flat-theta}{0,1,1}{0,1,0}{} & 1 & 0 & 1 & 2 & $\{1,2\}$ \\
\hline
$2|31$ & $x_2\xi_3$ & \motzkinpathnolabel{up,up,flat-xi}{0,1,1}{0,1,0}{} & 0 & 1 & 1 & 1 & $\{1\}$ \\
\hline
$3|1\underline{2}$ & $x_3\theta_2$ & \motzkinpathnolabel{up,flat-theta,up}{0,0,1}{0,0,1}{} & 1 & 0 & 1 & 1 & $\{1\}$ \\
\hline
$3|21$ & $x_3\xi_2$ & \motzkinpathnolabel{up,flat-xi,up}{0,0,1}{0,0,1}{} & 0 & 1 & 1 & 1 & $\{1,2\}$ \\
\hline
\end{tabular}
}
\caption{Conversion between $B_3^{(1,2)}$ and $\SW(1^3)$ with statistics (first half).}
\label{tab:first_half}
\end{table}

\begin{table}[ht]
\resizebox{\columnwidth}{!}{%
\centering
\begin{tabular}{|c|c|c|c|c|c|c|c|}
\hline
\makecell{\textbf{Segmented} \\ \textbf{Permutation $\sigma$}} & \makecell{\textbf{Basis} \\ \textbf{Element $b$}} & \makecell{\textbf{Path} \\ \textbf{Model}} & \makecell{$k(\sigma) = $\\$ \deg_\theta(b)$} & \makecell{$\ell(\sigma) = $ \\ $ \deg_\xi(b)$} & \makecell{$\sminv(\sigma) = $ \\ $\deg_x(b)$} & \textbf{$\sdinv(\sigma)$} & \makecell{$\Split(\sigma) = $ \\ $\Asc(b)$} \\
\hline
$1\underline{2}|3$ & {$\theta_2$} & \motzkinpathnolabel{up,flat-theta,up}{0,0,1}{0,0,0} & 1 & 0 & 0 & 0 & $\{1\}$ \\
\hline
$1\underline{3}|2$ & {$x_3\theta_3$} & \motzkinpathnolabel{up,up,flat-theta}{0,1,1}{0,0,1} & 1 & 0 & 1 & 0 & $\{2\}$ \\
\hline
$21|3$ & {$\xi_2$} & \motzkinpathnolabel{up,flat-xi,up}{0,0,1}{0,0,0} & 0 & 1 & 0 & 0 & $\{1\}$ \\
\hline
$2\underline{3}|1$ & {$x_2x_3\theta_3$} & \motzkinpathnolabel{up,up,flat-theta}{0,1,1}{0,1,1} & 1 & 0 & 2 & 1 & $\{1,2\}$ \\
\hline
$31|2$ & {$x_3\xi_3$} & \motzkinpathnolabel{up,up,flat-xi}{0,1,1}{0,0,1} & 0 & 1 & 1 & 1 & $\{2\}$ \\
\hline
$32|1$ & {$x_2x_3\xi_3$} & \motzkinpathnolabel{up,up,flat-xi}{0,1,1}{0,1,1} & 0 & 1 & 2 & 2 & $\{1,2\}$ \\
\hline
$1|2|3$ & {1} & \motzkinpathnolabel{up,up,up}{0,1,2}{0,0,0} & 0 & 0 & 0 & 0 & $\varnothing$ \\
\hline
$1|3|2$ & {$x_3$} & \motzkinpathnolabel{up,up,up}{0,1,2}{0,0,1} & 0 & 0 & 1 & 1 & $\{2\}$ \\
\hline
$2|1|3$ & {$x_2$} & \motzkinpathnolabel{up,up,up}{0,1,2}{0,1,0} & 0 & 0 & 1 & 1 & $\{1\}$ \\
\hline
$2|3|1$ & {$x_2x_3$} & \motzkinpathnolabel{up,up,up}{0,1,2}{0,1,1} & 0 & 0 & 2 & 2 & $\{1\}$ \\
\hline
$3|1|2$ & {$x_3^2$} & \motzkinpathnolabel{up,up,up}{0,1,2}{0,0,2} & 0 & 0 & 2 & 2 & $\{2\}$ \\
\hline
$3|2|1$ & {$x_2x_3^2$} & \motzkinpathnolabel{up,up,up}{0,1,2}{0,1,2} & 0 & 0 & 3 & 3 & $\{1,2\}$ \\
\hline
\end{tabular}
}
\caption{Conversion between $B_3^{(1,2)}$ and $\SW(1^3)$ with statistics (second half).}
\label{tab:second_half}
\end{table}

\clearpage
\section{Examples of Frobenius and Hilbert series}

\begin{flalign*}\Frob(R_1^{(1,2)};q;u,v) &= s_1 &\end{flalign*}

\begin{flalign*}\Hilb(R_1^{(1,2)};q;u,v) &= 1 &\end{flalign*}

\begin{flalign*}
    \Frob(R_2^{(1,2)};q;u,v) &= (q+u+v)s_{11} + s_2 &
\end{flalign*}

\begin{flalign*}
    \Hilb(R_2^{(1,2)};q;u,v) &= q+u+v+1 &
\end{flalign*}

\begin{flalign*}\Frob(R_3^{(1,2)};q;u,v) &= \Big(q^3+(q^2 +q)u+(q^2+q)v+u^2+(q+1)uv+v^2\Big)s_{1 1 1}\\ 
&\quad + \Big((q^2+q)+(q+1)u+(q+1)v+uv\Big)s_{2 1} + s_{3}&
\end{flalign*}

\begin{flalign*}\Hilb(R_3^{(1,2)};q;u,v) &=
(q^3 + 2q^2 + 2q + 1) + (q^2 + 3q + 2)u + (q^2 + 3q + 2)v + u^2\\&\quad  + (q + 3)uv + v^2     &
\end{flalign*}

\begin{flalign*}\Frob(R_4^{(1,2)};q;u,v) &= \Big(
q^6
+(q^5+q^4+q^3)u
+(q^5+q^4+q^3)v
+(q^3+q^2+q)u^2
\\&\quad\quad
+(q^4+2q^3+2q^2+q)uv
+(q^3+q^2+q)v^2
+(q^2+q+1)u^2v
\\&\quad\quad
+(q^2+q+1)uv^2
+u^3
+v^3\Big)
s_{1 1 1 1} 
\\&\quad+\Big(
(q^5+q^4+q^3)
+(q^4+2q^3+2q^2+q)u
+(q^4+2q^3+2q^2+q)v
\\&\quad\quad
+(q^2+q+1)u^2
+(q^3+3q^2+3q+1)uv
+(q^2+q+1)v^2
\\&\quad\quad
+(q+1)u^2v
+(q+1)uv^2\Big)
s_{2 1 1}
\\&\quad+\Big((q^4+q^2)
+(q^3+q^2+q)u
+(q^3+q^2+q)v
+(q^2+2q+1)uv
\\&\quad\quad
+qu^2+
qv^2+
u^2v+
uv^2\Big)
s_{2 2} 
\\&\quad+\Big((q^3+q^2+q)+  (q^2+q+1)u+ 
(q+1)uv +(q^2+q+1)v\Big)s_{3 1} 
\\&\quad+ s_4&
\end{flalign*}

\begin{flalign*}\Hilb(R_4^{(1,2)};q;u,v) &= 
(q^6 + 3q^5 + 5q^4 +  6q^3 + 5q^2 +3q + 1)\\
&\quad
+(q^5 + 4q^4 + 9q^3 + 11q^2 + 8q +   3)u \\
&\quad + (q^5 + 4q^4 +  9q^3 + 11q^2 + 8q + 3)v
+ (q^3 + 4q^2 + 6q + 3)u^2 \\
& \quad
+ (q^4 +  5q^3 + 13q^2 + 17q +  8)uv
+ (q^3 + 4q^2 + 6q +  3)v^2 \\
&\quad
+ (q^2 + 4q  + 6)u^2v 
+ (q^2 + 4q + 6)uv^2 
 +  u^3 +  v^3
 &
\end{flalign*}

\end{document}